\documentclass[12pt]{article}
\usepackage[utf8]{inputenc}
\usepackage{amsmath}
\usepackage{amsfonts}
\usepackage{mathrsfs}
\usepackage{mathtools}
\usepackage[dvipdfmx]{graphicx}
\usepackage[top=40truemm,bottom=40truemm,left=30truemm,right=30truemm]{geometry}
\usepackage{here}
\usepackage{amsthm}
\usepackage{algorithmic}
\usepackage{algorithm}
\usepackage{url}
\usepackage{stmaryrd}
\usepackage[title]{appendix}
\usepackage{amssymb}
\usepackage[all]{xy}
\usepackage{amscd}
\usepackage{color}
\usepackage{extarrows}
\usepackage{multirow}
\usepackage{enumitem}
\usepackage[
        %dvipdfmx,
        hypertexnames=false,
        hyperindex,
      %pagebackref,
%        pdftex,
%        breaklinks=true,
        bookmarks=false,
        colorlinks=true,
        linkcolor=magenta,
        citecolor=magenta,
        urlcolor=blue
]{hyperref}
% \usepackage[
%  pagebackref,
% colorlinks=true,
% linkcolor=magenta,
% citecolor=magenta,
% urlcolor=blue
% ]{hyperref}
\usepackage{bm}

\newtheorem{theorem}{Theorem}[subsection]
\newtheorem{lemma}[theorem]{Lemma}
\newtheorem{proposition}[theorem]{Proposition}
\newtheorem{corollary}[theorem]{Corollary}
\theoremstyle{definition}
\newtheorem{definition}[theorem]{Definition}
\newtheorem{example}[theorem]{Example}
\newtheorem{remark}[theorem]{Remark}

\newtheorem{conjecture}[theorem]{Conjecture}

\theoremstyle{plain}
\newtheorem{theor}{Theorem}[section]
\theoremstyle{plain}

\theoremstyle{plain}

\theoremstyle{plain}
\newtheorem{conje}[theor]{Conjecture}

\theoremstyle{definition}
\newtheorem{rema}{Remark}
\numberwithin{equation}{section}

% \makeatletter
% \newcommand{\xRightarrow}[2][]{\ext@arrow 0359\Rightarrowfill@{#1}{#2}}
% \makeatother
\makeatletter
\newcommand*{\xnRightarrow}[2][]{%
  \ext@arrow 0359\nRightarrowfill@{#1}{#2}%
}
\newcommand*{\nRightarrowfill@}{%
  \narrowfill@\Relbar\Relbar\Rightarrow\neq
}
\newcommand*{\narrowfill@}[5]{%
  $\m@th\thickmuskip0mu\medmuskip\thickmuskip\thinmuskip\thickmuskip
  \relax#5#1\mkern-7mu%
  \cleaders\hbox{$#5\mkern-2mu#2\mkern-2mu$}\hfill
  \mkern-5mu %
  #4%
  \mkern-5mu %
  \cleaders\hbox{$#5\mkern-2mu#2\mkern-2mu$}\hfill
  \mkern-7mu#3$%
}
\makeatother

% \renewcommand{\thefootnote}{\arabic{footnote}}
% \setcounter{footnote}{0}

% \title{Generalized cryptographic semi-regular sequences:\\
% A variant of Fr\"{o}berg conjecture and a simple complexity estimation for Gr\"{o}bner basis computation}
\title{Generalization of semi-regular sequences:\\
Maximal Gr\"{o}bner basis degree, variants of genericness, and related conjectures\thanks{This is a revised and extended version of \cite{KY2}.}}
%タイトルは仮です
\date{\today}
\author{Momonari Kudo\thanks{Fukuoka Institute of Technology, 3-30-1 Wajiro-higashi, Higashi-ku, Fukuoka, 811-0295 Japan, \texttt{m-kudo@fit.ac.jp}} \and Kazuhiro Yokoyama\thanks{Rikkyo University, 3-34-1 Nishi-Ikebukuro, Toshima-ku, Tokyo, 171-8501 Japan, \texttt{kazuhiro@rikkyo.ac.jp}}}
% \institute{Momonari Kudo \at Fukuoka Institute of Technology, 3-30-1 Wajiro-higashi, Higashi-ku, Fukuoka, 811-0295 Japan\\ \email{m-kudo@fit.ac.jp}
% \and Kazuhiro Yokoyama \at Rikkyo University, 3-34-1 Nishi-Ikebukuro, Toshima-ku, Tokyo, 171-8501 Japan\\ \email{kazuhiro@rikkyo.ac.jp}}
%\author{}
\begin{document}

\maketitle

%Variants of semi-regular sequences: Maximal Grobner basis degree, genericness, and related conjectures

%Generalization of semi-regular sequences: Maximal Grobner basis degree, variants of genericness, and related conjectures

\abstract{
Nowadays, the notion of semi-regular sequences, originally proposed by Fr\"oberg, becomes 
very important not only in Mathematics, but 
also in Information Science, \textcolor{black}{in particular} Cryptology. 
For example, it is highly expected that 
randomly generated 
\textcolor{black}{polynomials} form a semi-regular sequence, and 
based on this observation, secure cryptosystems 
based on polynomial systems \textcolor{black}{can be} \textcolor{black}{devised}. 
In this paper, we deal with a semi-regular sequence
and its extension, named a generalized 
cryptographic semi-regular sequence, and give precise analysis on 
the complexity of computing a Gr\"obner basis 
of the ideal generated by such a sequence 
with help of several regularities of the ideal 
related to Lazard's bound on maximal Gr\"{o}bner basis degree and other bounds. 
We also study the genericness of 
the property that a sequence is semi-regular, 
and its variants related to Fr\"oberg's 
conjecture. Moreover, we discuss on 
the genericness of another important property 
that 
the initial ideal is weakly reverse lexicographic, 
related to Moreno-Soc\'{i}as' conjecture, 
and show some criteria to examine whether both 
Fr\"oberg's conjecture and Moreno-Soc\'{i}as' 
one hold at the same time. 
}

%======================================
\section{Introduction}\label{sec:Intro}
%======================================

In the theory of polynomial rings over fields, Gr\"{o}bner bases introduced first by Buchberger~\cite{Buchberger} provide various methods for investigating important properties of an ideal and its associated algebraic variety.
Here, a Gr\"{o}bner basis is defined as a special kind of its generator set, relative to a monomial ordering.
Buchberger also presented the first algorithm (Buchberger's algorithm) for computing Gr\"{o}bner bases.
Lazard's formulation in \cite{Lazard} and \cite{Lazard81} relates the problem of computing Gr\"{o}bner bases to the problem of reducing so-called Macaulay matrices.
FGLM~\cite{FGLM} and Gr\"{o}bner walk~\cite{GroebnerWalk} were {proposed} as algorithms that efficiently convert a given Gr\"{o}bner basis to a Gr\"{o}bner basis with respect to another monomial ordering; Hilbert driven {algorithm}~\cite{Tra} can be also applied to such a conversion.
{Faug\`{e}re} proposed his innovative algorithms $F_4$~\cite{F4} and $F_5$~\cite{F5} allowing one to quite efficiently compute Gr\"{o}bner bases, and these are nowadays state-of-the-art algorithms.
We refer to \cite{EF} for a survey on signature-based algorithms including $F_5$.
Thanks to Gr\"{o}bner basis conversion algorithms, it is important to estimate the complexity of computing a Gr\"{o}bner basis with respect to a specific monomial ordering.
\textcolor{black}{(Throughout this paper, all the complexities are measured by the number of arithmetic operations in a field to which all the coefficients of the input polynomials belong.)}
In this paper, we consider only the case of {\it graded} monomial orderings; \textcolor{black}{in particular, we adopt a graded reverse lexicographic ordering.}

In order to estimate the complexity, it suffices to provide an upper-bound on the so-called {\it solving degree}:
Roughly speaking, the solving degree is the highest degree of polynomials involved in Gr\"{o}bner basis computation, see \cite[Section 2.2]{KY24b} for rigorous definitions.
If the {given} ideal is homogeneous, the solving degree can be equal to the maximal degree of the reduced Gr\"{o}bner basis with respect to a monomial ordering that one considers, and its upper-bounds have been well-studied.
In the case of inhomogeneous ideals, the solving degree can be {\it greater than} the maximal degree of the reduced Gr\"{o}bner basis, whence estimating it {\it directly} is difficult in general, see \cite{CG23} and \cite{KY24b} for details.
On the other hand, it is well-known (cf.\ \cite[Prop.\ 4.3.18]{KR}) that a Gr\"{o}bner basis of an inhomogeneous ideal with respect to a graded monomial ordering is easily obtained from a Gr\"{o}bner basis of the ideal of {\it homogenized} generators.
Therefore, the homogeneous case is {essential}, and we focus on that case.
\textcolor{black}{
In the following, let $K$ be a field, and let $R=K[x_1,\ldots,x_n]$ be the polynomial ring of $n$ variables over $K$.
The term ``degree'' means total degree, unless otherwise noted.
Let $R_d$ be its homogeneous part of degree $d$ of $R$ (i.e., the $K$-linear space spanned by the monomials in $R$ of degree $d$) for each non-negative integer $d$.
For an ideal (possibly inhomogeneous) $I$ of $R$}, we denote by $\mathrm{max.GB.deg}_{\prec}(I)$ the maximal degree of the elements of the reduced Gr\"{o}bner basis of $I$ with respect to a monomial ordering $\prec$ on the set {$\mathcal{T}$} 
of monomials in $R$, following \cite[Definition 7]{CG20} (it is also denoted by $\deg (I,\prec)$ in the literature of commutative algebra, see e.g., \cite[Definition 4]{HS}).
If $I$ is generated by $F \subset R$, we may write $\mathrm{max.GB.deg}_{\prec}(F)$ in place of it.

%===================================================
\subsubsection*{Lazard's bound and Hashemi-Seiler's inequality}
%===================================================

For a {\it homogeneous} ideal $I \subset R=K[x_1,\ldots,x_n]$ with a generating set $F=\{f_1,\ldots,f_m\} \subset R$, it is well-known {(see e.g., \cite[Proposition 1]{BFS-F5}, \cite[Equation (24)]{Steiner24})} that a Gr\"{o}bner basis of $I$ with respect to a graded monomial ordering up to degree $d$ can be computed in $O(m d \binom{n+d-1}{d}^{\omega})$, where $2\leq \omega < 3$ is the exponent of matrix multiplication.
(We can also obtain another upper-bound $O(m \binom{n+d}{d}^{\omega})$ by the proof of \cite[Theorem 1.72]{Spa}.)
{From} this fact, {it follows} in the {\it homogeneous} case that the complexity of computing the reduced Gr\"{o}bner basis of $I$ is upper-bounded by $O(m D \binom{n+D-1}{D}^{\omega})$ (or $O(m \binom{n+D}{D}^{\omega})$) for any integer $D$ with $\mathrm{max.GB.deg}_{\prec}(I) \leq D$.
Hence, finding a tighter bound $D$ of $\mathrm{max.GB.deg}_{\prec}(I)$ is important for estimating {shapely} the complexity of Gr\"{o}bner basis computation in the homogeneous case.
The case on which we {focus} is the case where the Krull dimension {of $R/I$, denoted by $\mathrm{Krull.dim} (R/I)$,} is equal to or less than one; see e.g., \cite[Section 1]{HS} for a summary of existing results when {$\mathrm{Krull.dim} (R/I) \geq 2$}.
In the case where $\mathrm{Krull.dim} (R/I) \leq 1$, Lazard's bound (called also Macaulay's bound) is well-known.
{Specifically, if $K$ is large enough, then there exists a (suitable) linear coordinate change $\sigma$ such that $R/(I^{\sigma}+\langle x_n \rangle)$ is Artinian (equivalently $I$ is in {\it quasi stable position}, see Lemma \ref{lem:KrullDim1Equi} below).
{From this, we assume without loss of generality that $R/(I+\langle x_n \rangle)$ is Artinian.}
In this case, putting $d_j = \deg(f_j)$ for $1\leq j \leq m$ and supposing that $d_1 \geq d_2 \geq \cdots \geq d_m$ (descending order), it follows that}
\begin{equation}\label{eq:Lazard}
    \mathrm{max.GB.deg}_{\prec}(I) \leq \sum_{j=1}^{\min\{m,n\}}(d_j-1)+1
\end{equation}
{for the graded reverse lexicographical ordering $\prec$ with $x_n \prec \cdots \prec x_1$, see \cite[Th\'eor\`eme 3.3]{Lazard81} and \cite[Theorem 2]{Lazard}.}
{Recently, Hashemi-Seiler~\cite{HS} re-proved Lazard's bound by using the inequality}
\begin{equation}\label{eq:HS}
    \mathrm{max.GB.deg}_{\prec}(I) \leq \mathrm{max} \{\mathrm{hilb}(I+\langle x_n\rangle),\mathrm{hilb}(I) \},
\end{equation}
where $\mathrm{hilb}(J)$ denotes the Hilbert regularity of a homogeneous ideal $J$ of $R$.
Note that $\mathrm{hilb}(I+\langle x_n \rangle)$ is equal to the {\it degree of regularity} ${d}_{\rm reg}(I+\langle x_n \rangle)$ since $R/(I+\langle x_n \rangle)$ is Artinian, and $\mathrm{hilb}(I)$ is equal to the {\it generalized degree of regularity} $\widetilde{d}_{\rm reg}(I)$.
For the definition of $d_{\rm reg}$ and {$\widetilde{d}_{\rm reg}$}, see Remark \ref{rem:relation} below. 

\begin{rema}
    \textcolor{black}{
    Note that the condition $\mathrm{Krull.dim} (R/I) \leq 1$ \textcolor{black}{does not necessarily imply} $d_{\rm reg}(I+\langle x_n \rangle) = \mathrm{hilb}(I+\langle x_n \rangle)<\infty$, without considering any linear coordinate change $\sigma$.
    For example, putting $F:=\{f_1:=x_1x_2-1,f_2:=x_2-1\}\subset K[x_1,x_2]$ (inhomogeneous), it is obvious that $\mathrm{Krull.dim} (R[y]/I) = 1$ for $I=\langle F^h \rangle$ but $d_{\rm reg}(I+\langle y \rangle) =d_{\rm reg}(\langle F^{\rm top}\rangle)=\infty$, where $F^h$ is the homogenization of $F$ by an extra variable $y$, and where $F^{\rm top}$ is the homogeneous part of highest degree, see Remark \ref{rem:inhomo} below.}
\textcolor{black}{In this case, $F^h=\{x_1x_2-y^2,x_2-y\}$ and $F^{\rm top}=\{x_1x_2,x_2\}$. }
    \textcolor{black}{
    On the other hand, if $K$ is a finite field of order $q$ and if we would compute a zero of $\langle F\rangle$ over $K$, we often add the set of so-called field equations $x_i^q-x_i$ to an inhomogeneous $F$, say $F_1 := F \cup \{ x_i^q-x_i : 1 \leq i \leq n\}$, so that we always have $d_{\rm reg}(\langle F_1^h\rangle+\langle y \rangle) =d_{\rm reg}(\langle F_1^{\rm top}\rangle)<\infty$.}
\end{rema}

%============================
\subsubsection*{Our results and conjectures}
%============================
{In this paper, we focus on several cases that are expected to be {\it generic} in some sense, and {obtain upper-bounds on $\mathrm{max.GB.deg}_{\prec}(I)$ and some results related to Fr\"{o}berg's conjecture~\cite{Froberg} and Moreno-Soc\'{i}as' one~\cite{MS}, in those cases.}
For example, we will assume that $R/(I+\langle x_n \rangle)$ has the Hilbert series associated to generic forms, as conjectured by Fr\"{o}berg~\cite{Froberg} {(see Conjecture \ref{conj:Froberg} below)}.
Note that a homogeneous polynomial sequence is called {\it cryptographic semi-regular} (cf.\ \cite{BNDGMT21}) if its associated coordinate ring has the Hilbert series conjectured by Fr\"{o}berg, see Definition \ref{def:csemireg} and Proposition \ref{prop:Diem} below for details.
{In the literature, a cryptographic semi-regular sequence plays a very important role in analyzing Gr\"{o}bner basis computation, both in theory and in practical applications to cryptography, see e.g., Bardet et al's works~\cite{Bar}, \cite{BFS}, \cite{BFS-F5}, \cite{BFSY}, Gorla et al.'s works~\cite{BNDGMT21}, \cite{CG20}, \cite{CG23}, and our ones~\cite{KY}, \cite{KY24b}.}
In our earlier work~\cite{KY24b}, when $m \geq n$, we improved Lazard's bound on $\mathrm{max.GB.deg}_{\prec}(F)$ to $\sum_{j=1}^{n}(d_j-1)+1$ with $d_1 \leq \cdots \leq d_{m}$ {(ascending order)}, supposing that {$(f_1|_{x_n=0},\ldots,f_i|_{x_n=0})$} is cryptographic semi-regular \textcolor{black}{on $K[x_1,\ldots,x_{n-1}]$} for each $1 \leq i \leq m$.}
{In \cite{KY24b}, we also defined a {\it generalized cryptographic semi-regular} sequence (see Definition \ref{def:gensemireg} below for the definition and Remark \ref{rem:construction} for a construction), as an extension of cryptographic semi-regular sequence to the case of Krull dimension one.}

{As the first result of this paper, we prove Theorem \ref{thm:mainT}, which provides an upper-bound on $\mathrm{max.GB.deg}_{\prec}(I)$ for a generalized cryptographic semi-regular sequence and a complexity bound on computing a Gr\"{o}bner basis of $I$.}

\begin{theor}\label{thm:mainT}
    Let $R=K[x_1,\ldots,x_n]$ be the polynomial ring of $n$ variables over a field $K$ (in any characteristic), and $\prec$ the graded reverse lexicographical ordering with $x_n \prec \cdots \prec x_1$.
    Let $I$ be a homogeneous ideal of $R$ generated by homogeneous polynomials $f_1,\ldots,f_m$ in $R\smallsetminus K$ such that $\mathrm{Krull.dim}(R/I)\leq 1$.
    Assume that {$(f_1|_{x_n=0},\ldots,f_m|_{x_n=0})$ is cryptographic semi-regular \textcolor{black}{on $K[x_1,\ldots,x_{n-1}]$} (equivalently $(f_1,\ldots,f_m,x_n)$ is cryptographic semi-regular)}, and that $(f_1,\ldots,f_m)$ is generalized cryptographic semi-regular, 
    Then the following (1) and (2) hold:
    \begin{enumerate}
        \item[(1)] We have
        \begin{equation}\label{eq:max_Dnew}
        \mathrm{max.GB.deg}_{\prec}(I) \leq \mathrm{max}\{ d_{\rm reg}(I+\langle x_n \rangle), \widetilde{d}_{\rm reg}(I) \} \leq D^{(n,m)},
        \end{equation}
        where 
        \begin{equation}\label{eq:newbound}
        D^{(n,m)} := \left\{ \! \begin{array}{cc}\displaystyle {\deg\left(\left[\frac{\prod_{i=1}^m(1-z^{d_i})}{(1-z)^{n}}\right]\right)}+1 & (m\geq n), \\
        \displaystyle \sum_{i=1}^{n-1} (d_i-1) + 1 & (m=n-1).
        \end{array}\right.
        \end{equation}
        \item[(2)] The complexity of computing a Gr\"{o}bner basis of $I$ with respect to $\prec$ is upper-bounded by
        \begin{equation}\label{eq:new_comp_bound}
        O \left( m \binom{n+D^{(n,m)}-1}{D^{(n,m)}}^{\omega} \right)
        \end{equation}
        \textcolor{black}{arithmetic operations in $K$}, where $2\leq \omega < 3$ is the exponent of matrix multiplication.
    \end{enumerate}
\end{theor}

The bound $D^{(n,m)}$ in \eqref{eq:max_Dnew} is optimal for a homogeneous ideal $I$ satisfying the assumptions of Theorem \ref{thm:mainT}, see Example \ref{ex:optimal} below.
\textcolor{black}{The proof of (2) in Theorem \ref{thm:mainT} is based on a well-known fact that the a Gr\"{o}bner basis of $I$ is obtained by computing the reduced row echelon form of the degree-$D$ Macaulay matrix for any $D$ with $D \geq  \mathrm{max.GB.deg}_{\prec}(I)$, see Section \ref{subsec:general} for details.
}

Our assumption that a homogeneous polynomial sequence is generalized cryptographic semi-regular could be useful to analyze the security of multivariate cryptosystems and so on (cf.\ \cite[Section 4.3]{KY24b}).
The bound {$D^{(n,m)}$} given in \eqref{eq:newbound} coincides with Lazard's bound when {$m\in \{n-1, n\}$}, while it can be sharper than Lazard's one when $m>n$.
We also note that, the complexity bound \eqref{eq:new_comp_bound} is {slightly} better than the usual bound $O(mD\binom{n+D-1}{D}^{\omega})$ (see e.g., \cite[Proposition 1]{BFS-F5}, \cite[Equation (24)]{Steiner24}) and than $O(m\binom{n+D}{D}^{\omega})$ (which comes from the proof of \cite[Theorem 1.72]{Spa}) for an arbitrary $D$ with $D \geq \mathrm{max.GB.deg}_{\prec}(F)$.

As the second result, we shall prove that Hashemi-Seiler's inequality \eqref{eq:HS} is as possible as tight under some assumptions, {one of which is that an initial ideal is \textit{weakly reverse lexicographic}.
Here, a weakly reverse lexicographic ideal is a monomial ideal $J$ such that if $x^{\alpha}$ is one of the minimal generators of $J$ then every monomial of the same degree which preceeds $x^{\alpha}$ must belong to $J$ as well.
Moreno-Soc\'{i}as~\cite{MS} conjectured that, for the graded reverse lexicographical ordering, the initial ideal of a generic ideal is a weakly reverse lexicographic.}

\begin{theor}\label{thm:WRL-GB}
    Let $R=K[x_1,\ldots,x_n]$ be the polynomial ring of $n$ variables over a field $K$ (in any characteristic), and let $\prec$ be the graded reverse lexicographical ordering $\prec$ on $R$ with $x_n \prec \cdots \prec x_1$.
    Let $I$ be a proper homogeneous ideal of $R$ {with $\mathrm{Krull.dim} (R/I) \leq 1$ and $\mathrm{Krull.dim}(R/(I+\langle x_n \rangle))=0$.}
    % Assume that the Krull dimension of $R/I$ is \textcolor{red}{$0$ or $1$} and $R/(I+\langle x_n \rangle)$ is Artinian.
    {If {$(f_1|_{x_n=0},\ldots,f_m|_{x_n=0})$ is cryptographic semi-regular \textcolor{black}{on $K[x_1,\ldots,x_{n-1}]$} (equivalently $(f_1,\ldots,f_m,x_n)$ is cryptographic semi-regular)} and if} 
    {the initial ideal ${\rm in}_{\prec}(I)=\langle \mathrm{LM}_{\prec}(I) \rangle_R$} 
    is weakly reverse lexicographic, then the inequality \eqref{eq:HS} becomes an equality.
    % Hence, the inequality \eqref{eq:maxGBbound} in Theorem \ref{thm:mainT} (2) also becomes an equality.
        % \textcolor{blue}{$(f_1,\ldots,f_m,x_n)$ : crypt.\ semi-regularの仮定必要なはず（weakly reverse性は$\mathrm{LM}(I^{\sigma})$に仮定しなければならない）}
\end{theor}

{
After proving Theorems \ref{thm:mainT} and \ref{thm:WRL-GB}, in Section \ref{sec:generic}, we also study the genericness of our assumptions used in the theorems, based on the theory of {\it parametric Gr\"{o}bner bases}.
For this, in \textcolor{black}{Section} \ref{subsec:generic}, we formulate the genericness of cryptographic semi-regular sequences in terms of parametric Gr\"{o}bner bases.
Then, in Section \ref{sec:generic}, some analogous results for generalized cryptographic semi-regular sequences will be proved, and the following conjectures will be raised as variants of Fr\"{o}berg's conjecture:
}

\begin{conje}[{Conjecture \ref{conj:A}}; see also {\cite[Conjecture 4.3.4]{KY24b}}]\label{conj:1}
    {Assume that $K$ is an infinite field, and we endow $V_{n,m,(d_1,\ldots,d_m)}:=R_{d_1}\times \cdots \times R_{d_m}$ naturally with the Zariski topology.
    For {any} $n$, $m$, and $(d_1,\ldots,d_m)$ and for any $\bm{p}\in \mathbb{P}^{n-1}(K)$, the property that $\bm{F}\in V_{n,m,(d_1,\ldots,d_m)}$ is generalized cryptographic semi-regular is generic on $Z_{\bm{p}}:=\{ \bm{H} \in V_{n,m,(d_1,\ldots,d_m)} : \bm{p}\in V_{\overline{K}}(\bm{H}) \}$, namely there exists a non-empty Zariski-open set of $Z_{\bm{p}}$ with respect to the induced topology on which the property holds.}
\end{conje}

%{Let us raise a more general conjecture:}

\begin{conje}[{Conjecture \ref{conj:B}}]\label{conj:2}
    {Assume that $K$ is an infinite field.
    For {any} $n$, $m$, and $(d_1,\ldots,d_m)$, the property that $\bm{F}\in V_{n,m,(d_1,\ldots,d_m)}$ is generalized cryptographic semi-regular is generic on $V_{n,m,(d_1,\ldots,d_m)}^{\dim \geq 1}:=\{ \bm{H} \in V_{n,m,(d_1,\ldots,d_m)} : \mathrm{Krull.dim}(R/\langle \bm{H}\rangle)\geq 1 \}$, namely there exists a non-empty Zariski-open set of $V_{n,m,(d_1,\ldots,d_m)}^{\dim \geq 1}$ with respect to the induced topology on which the property holds.}
\end{conje}

{We also show a new criterion to examine whether Fr\"{o}berg's conjecture and Moreno-Soc\'{i}as' one hold at the same time, \textcolor{black}{for a \textit{fixed} parameter set $(n,m,d_1,\ldots,d_m)$ (although these conjectures are set for {\it any} parameter set, see Remark \ref{rem:any} for details):}}

\begin{theor}[Theorem \ref{thm:semi+wrl}]\label{th:semi+wrl}
    {Assume that $K$ is an infinite field, and let $n,m,d_1,\ldots,d_m$ be positive integers.
    % We endow $V_{n,m,(d_1,\ldots,d_m)}:=R_{d_1}\times \cdots \times R_{d_m}$ naturally with the Zariski topology.
    If there exists a cryptographic semi-regular sequence $\bm{F}\in V_{n,m,(d_1,\ldots,d_m)}$ such that 
   {${\rm in}_{\prec}(\langle \bm{F}\rangle)$} is weakly reverse lexicographic, then there also exists a non-empty Zariski-open set $U$ of $V_{n,m,(d_1,\ldots,d_m)}$ such that $\bm{H}$ is cryptographic semi-regular and 
   ${\rm in}_{\prec}(\langle \bm{H}\rangle)$ is weakly reverse lexicographic for any $\bm{H}\in U$.}
   {Namely, Fr\"{o}berg's conjecture and Moreno-Soc\'{i}as' one both hold for the parameter \textcolor{black}{set} $(n,m,d_1,\ldots,d_m)$.}
\end{theor}

\begin{rema}\label{rem:any}
    \textcolor{black}{Pardue proved in his seminal paper~\cite{Pardue} that Fr\"{o}berg's conjecture is equivalent to his several conjectures, and that Moreno-Soc\'{i}as' conjecture implies all of them.
    Note that these equivalence and implication are intended to mean that each conjecture holds for {\it any} choice of parameters $(n,m,d_1,\ldots,d_m)$.
    More precisely, we see from the proof of his theorem~\cite[Theorem 2]{Pardue} that (1) and (2) are equivalent to each other and that (3) implies (1) and (2), over an infinite field:
    \begin{enumerate}
        \item[(1)] (Fr\"{o}berg's conjecture) The property that $\bm{F}\in V_{n,m,(d_1,\ldots,d_m)}$ is cryptographic semi-regular is generic (in terms of Definition \ref{def:generic}) for any $(n,m,d_1,\ldots,d_m)$.
        \item[(2)] (Pardue's conjecture~\cite[Conjecture B]{Pardue}) The property that $\bm{F}\in V_{n,m,(d_1,\ldots,d_m)}$ is semi-regular is generic for any $(n,m,d_1,\ldots,d_m)$.
        \item[(3)] (Moreno-Soc\'{i}as' conjecture~\cite{MS}) The property that the initial ideal of the homogeneous ideal generated by $\bm{F}\in V_{n,m,(d_1,\ldots,d_m)}$ with respect to a graded reverse lexicographical ordering is weakly reverse lexicographic is generic for any $(n,m,d_1,\ldots,d_m)$.
    \end{enumerate}
    Namely, \textcolor{black}{even} if the property in (3) is \textcolor{black}{shown to be} generic for a fixed parameter set $(n,m,d_1,\ldots,d_m)$, it is unclear that the property in (1) (or (2)) is generic for the same parameter set (this also applies to the case of (1) $\Rightarrow$ (2)).}
\end{rema}

{Analogously to Theorem \ref{th:semi+wrl}, we finally prove the following theorem, which can also examine Conjecture \ref{conj:1}:}

\begin{theor}[Proposition \ref{prop:gsemi+wrl} and Proposition \ref{prop:Z_p}]\label{th:gsemi+wrl}{
Under the same setting as in Theorem \ref{th:semi+wrl}, assume that there exists a generalized cryptographic semi-regular sequence $\bm{F}$ in $V_{n,m,(d_1,\ldots,d_m)}$ with $\mathrm{Krull.dim}(R/\langle \bm{F}\rangle)=1$ such that $\dim_K (R/\langle \bm{F}\rangle)_D=1$ for $D=\widetilde{d}_{\rm reg}(\langle \bm{F}\rangle)$ and 
${\rm in}(\langle \bm{F}\rangle)$ is weakly reverse lexicographic.
Then, for any $\bm{p}\in \mathbb{P}^{n-1}(K)$, there exists a non-empty Zariski-open set $U_{\bm{p}}$ of $Z_{\bm{p}}:=\{ \bm{H} \in V_{n,m,(d_1,\ldots,d_m)} : \bm{p}\in V_{\overline{K}}(\bm{H}) \}$ with respect to the induced topology such that $\bm{H}$ is generalized cryptographic semi-regular and ${\rm in}(\langle \bm{H}\rangle)$ is weakly reverse lexicographic for any $\bm{H}$ in $U_{\bm{p}}$.}
\end{theor}

% By our assumption that a homogeneous polynomial sequence is generalized cryptographic semi-regular together with Theorem \ref{thm:mainT}, we can describe somehow heuristic (but significant) arguments used in cryptanalysis.
% For example, it is observed in \cite{IA} that the Gr\"{o}bner basis computation for a homogeneous sequence that is not necessarily cryptographic semi-regular behaves like that for a cryptographic semi-regular one, just before a Gr\"{o}bner basis is obtained.
% This behavior means that the sequence is generalized cryptographic semi-regular, and therefore, by assuming so, we can apply our complexity estimation \eqref{eq:new_comp_bound}.

{To the end of Introduction, we remark an application of Theorem \ref{thm:mainT} to the inhomogeneous case.}

\begin{rema}[Inhomogeneous case]\label{rem:inhomo}
    By Theorem \ref{thm:mainT}, we can also estimate the complexity of computing a Gr\"{o}bner basis for an inhomogeneous polynomial sequence defining a zero-dimensional ideal, as in \cite[Section 4.3]{KY24b}.
    Indeed, for a subset $F=\{f_1,\ldots,f_m \}$ of $R=K[x_1,\ldots,x_n]$ containing at least one inhomogeneous polynomial, a Gr\"{o}bner basis of $\langle F \rangle_R$ is obtained by reducing the associated degree-$D$ Macaulay matrix at $D = \mathrm{max.GB.deg}_{\prec^h}(I)$ with $I=\langle f_1^h,\ldots,f_m^h \rangle \subset R':=R[y]$, see \cite[Theorem 7]{CG20}.
    Here, each $f_j^h$ denotes the homogenization of $f_j$ by an extra variable $y$, and $\prec^h$ is the homogenization of $\prec$, see e.g., \cite[Section 3.1.1]{KR} for definition. 
    The complexity of reducing the Macaulay matrix can be easily estimated as $O(m\binom{n+D}{D}^{\omega})$, without assuming $D=O(n)$.
    %easily obtained by one of the ideal $\langle F^h \rangle_{R'}$ of $R'=K[x_1,\ldots,x_n,y]$ generated by the homogenized polynomials $f_1^h,\ldots,f_m^h$ by an extra variable $y$, see \cite[Prop.\ 4.3.18]{KR}.
    Suppose the following three conditions:
    \begin{itemize}
        \item The Krull dimension of $R'/\langle f_1^h,\ldots,f_m^h,y \rangle$ is zero, namely the Krull dimension of $R/\langle f_1^{\rm top},\ldots,f_m^{\rm top} \rangle$ is zero, where $f_j^{\rm top}:=(f_j^h)|_{y=0}$ for $1 \leq j \leq m$.
        (Equivalently these two graded rings are both Artinian.)
        \item The sequence $(f_1^h,\ldots,f_m^h,y) \in (R')^{m+1}$ is cryptographic semi-regular, equivalently $(f_1^{\rm top},\ldots,f_m^{\rm top}) \in R^m$ is cryptographic semi-regular.
        \item The sequence {$(f_1^h,\ldots,f_m^h) \in (R')^{m}$} is generalized cryptographic semi-regular.
    \end{itemize}
    {Then, it follows from Theorem \ref{thm:mainT} that $\max\{ d_{\rm reg}(\langle F^{\rm top}\rangle),\widetilde{d}_{\rm reg}(\langle F^h\rangle)\}\leq D^{(n+1,m)}$ with $F^{\rm top}:=\{f_1^{\rm top},\ldots,f_m^{\rm top}\}$ and we have the complexity bound
    % the inequality \eqref{eq:maxGBbound} holds for $I = \langle F^h \rangle_{R'}$ and $\ell = y$, where we replace $R$ by $R'$ and set $\sigma$ to be the identity.
    % Moreover, under the second and third assumptions, we also have the complexity bound
    \begin{equation}\label{eq:new_comp_bound_inhom}
        O \left( m \binom{n+D^{(n+1,m)}}{D^{(n+1,m)}}^{\omega} \right).
    \end{equation}
    % where
    % \begin{equation}\label{eq:newbound}
    %     D_{\rm new} = \left\{ \! \begin{array}{cc}\displaystyle {\deg\left(\left[\frac{\prod_{i=1}^m(1-z^{d_i})}{(1-z)^{n+1}}\right]\right)}+1 & (m \geq n+1), \\
    %     \displaystyle \sum_{i=1}^{n} (d_i-1) + 1 & (m=n).
    %     \end{array}\right.
    % \end{equation}
    %by Theorem \ref{thm:mainT} (2).
    Note that this upper-bound is heuristically found in \cite{FK24}.}
\end{rema}

\numberwithin{equation}{subsection}

%======================
\section{Preliminaries}
%======================

Let $K$ be a field, and let $\overline{K}$ denote its algebraic closure.
Let $n$ be a natural number, and let $R =K[x_1,\ldots,x_n]$ be the polynomial ring of $n$ variables over $K$.
% For $d \geq 0$, we denote by $R_d$ the homogeneous component of $R$ of degree $d$.
\textcolor{black}{{We denote by $\mathcal{T}$ the set of monomials in $\{x_1,\ldots,x_n\}$. 
As our convention, as graded structure objects, 
for a subset $S$ of $R$ and a non-negative integer $d$, 
we also denote by $S_d$ the subset consisting of all elements of degree $d$ in $S$.
As an exception, if $S$ is an ideal of $R$, it is allowed that $S_d$ contains $0$.}}
The Krull dimension of a commutative ring $A$ is denoted by $\mathrm{Krull.dim}(A)$.
For ideals $I$ and $J$ of $R$, their ideal quotient is defined as $(I : J) = \{ f \in R \mid f J \subset I \}$.
The saturation of $I$ with respect to $J$ is $(I : J^{\infty}) := \{ f \in R \mid f J^s \subset I \mbox{ for some $s \geq 1$}\}$.
In particular, the saturation of a homogeneous ideal $I$ with respect to the maximal homogeneous ideal $\mathfrak{m}=\langle x_1,\ldots,x_n \rangle_R$ is denoted by $I^{\rm sat} = (I:\mathfrak{m}^{\infty})$, which is also homogeneous.

%==============================
\subsection{Hilbert regularity}
%==============================

Let $I$ be a non-zero homogeneous ideal of $R$.
Let ${\rm HP}_{R/I} \in \mathbb{Q}[z]$ denote the Hilbert polynomial of $R/I$, namely there exists a non-negative integer $d_0$ such that ${\rm HF}_{R/I}(d) = {\rm HP}_{R/I}(d)$ for any $d$ with $d \geq d_0$, where $\mathrm{HF}$ is the Hilbert function.

\begin{definition}[Hilbert regularity (or index of regularity~{\cite[Chapter IX, Section 3]{CLO}})]
    The smallest $d_0$ such that ${\rm HF}_{R/I}(d) = {\rm HP}_{R/I}(d)$ for any $d$ with $d \geq d_0$ is called the {\it Hilbert regularity} of $I$, denoted by ${\rm hilb}(I)$, or is called the {\it index of regularity} of $I$, denoted by $i_{\rm reg}(I)$.
\end{definition}

%is written as
% \[
% {\rm HS}_{R/I}(z) = \dfrac{h_{R/I}(z)}{(1-z)^{r}}
% \]

Throughout the rest of this subsection, we set $r:=\mathrm{Krull.dim}(R/I)$.

\begin{remark}\label{rem:relation}
        It is well-known that the following conditions are all equivalent:
        \begin{enumerate}
            \item[(1)] The Krull dimension $r$ of $R/I$ is equal to $1$ (resp.\ $0$).
            \item[(2)] The projective algebraic set $V_{\overline{K}}(I)$ is a finite set (resp.\ an empty set).
            \item[(3)] The Hilbert polynomial ${\rm HP}_{R/I}$ is a positive constant (resp.\ the zero).
            \item[(4)] There exists a non-negative integer $d_0$ such that \textcolor{black}{${\rm HF}_{R/I}(d) = {\rm HF}_{R/I}(d_0)>0$} (resp.\ ${\rm HF}_{R/I}(d) = 0$) for any $d$ with $d \geq d_0$.
        \end{enumerate}
        In this case, it is straightforward that
        \begin{eqnarray*}
            \mathrm{hilb}(I) &=&\min \{ d_0 : {\rm HF}_{R/I}(d) = {\rm HF}_{R/I}(d_0) \mbox{ for any $d$ with $d \geq d_0$}\}\\
            &=& \min \{ d_0 : \dim_K (R/I)_d = \dim_K (R/I)_{d_0} \mbox{ for any $d$ with $d \geq d_0$}\}.
        \end{eqnarray*}
        The middle and the right hand sides are called the {\it generalized degree of regularity} of $I$ for $r \leq 1$, denoted by $\widetilde{d}_{\rm reg}(I)$ (cf.\ \cite[Sect.\ 2.3]{KY24b}):
        When $r=0$, these are equal to the {\it degree of regularity} $d_{\rm reg}(I) := \min \{ d : R_d = I_d \}$.
        Namely, we have
        \begin{equation}\label{eq:Hilb}
        \mathrm{hilb}(I) = \left\{ 
        \begin{array}{cll}
             \widetilde{d}_{\rm reg}(I) & &  (r = 1), \\
             \widetilde{d}_{\rm reg}(I) & = d_{\rm reg}(I) & (r=0).
        \end{array}\right.
        \end{equation}
        See \cite{BNDGMT21} for bounds on ${d}_{\rm reg}(I)$.
\end{remark}

Although we mainly treat the case where $r \leq 1$ in this paper, we distinguish $\mathrm{hil}(I)$ and $\widetilde{d}_{\rm reg}(I)$ for the compatibility with the condition that a sequence of homogeneous polynomials in $R$ is regular (cf.\ Lemma \ref{lem:four} below).

% For consistency, we do not use any of the notations $d_{\rm reg}(I)$ nor $\widetilde{d}_{\rm reg}(I)$ to denote ${\rm hil}(I)$ even when $r \leq 1$, following standard convention in commutative algebra and algebraic geometry.

\begin{remark}
    {Note also} that \textcolor{black}{the Hilbert series} ${\rm HS}_{R/I}$ is written as {${\rm HS}_{R/I}(z) =h_{R/I}(z)/{(1-z)^{r}}$} for some polynomial $h_{R/I}(z) \in \mathbb{Z}[z]$, which is called the {\it $h$-polynomial} of $R/I$.
    Then, we have ${\rm hilb}(I) = \deg h_{R/I}-r+1$, see \cite[Prop.\ 4.1.12]{BH}.

    Moreover, it follows from Grothendieck-Serre's formula that
    \[
    {\rm hilb}(I) \leq \mathrm{reg}(I) = \mathrm{reg}(R/I) + 1,
    \]
    where `${\rm reg}$' means {\it Castelnuovo-Mumford regularity}, see \cite[\S20.5]{Eisen} for the definition.
    The equality holds if $R/I$ is Artinian, i.e., the Krull dimension of $R/I$ is zero{, see e.g., \cite[Corollary 4.4]{Eisen2}}.
\end{remark}

\begin{definition}[cf.\ {\cite[Section 3]{HS}}]
    We say that $I$ is in {\it Noether position} with respect to the variables $x_1,\ldots,x_{n-r}$, where $r=\mathrm{Krull.dim}(R/I)$, if the injective ring extension $K[x_{n-r+1}, \ldots , x_{n}] \to R/I$ is integral.
\end{definition}

\begin{lemma}[{\cite[Lemma 4.1]{BG01}}; see also {\cite[Proposition 5.4]{LJ}}]\label{lem:NPequi}
Let $I$ be a homogeneous ideal of $R$, and let $\prec$ be the {graded reverse lexicographical} {ordering} on $R$ with $x_n \prec \cdots \prec x_1$.
Then, the following are equivalent:
\begin{enumerate}
    \item[(1)] $I$ is in Noether position with respect to the variables $x_1,\ldots,x_{n-r}$.
    \item[(2)] For each $i$ with $1\leq i \leq n-r$, there exists $r_i \geq 0$ such that $x_i^{r_i} \in \langle \mathrm{LM}_{\prec}(I) \rangle$.
    \item[(3)] \textcolor{black}{$\mathrm{Krull.dim} (R/ (I+\langle x_{n-r+1},\ldots,x_n\rangle)) = 0$}.
    \item[(4)] \textcolor{black}{$\mathrm{Krull.dim} (R/ (\langle \mathrm{LM}_{\prec}(I) \rangle+\langle x_{n-r+1},\ldots,x_n\rangle)) = 0$}.
\end{enumerate}
\end{lemma}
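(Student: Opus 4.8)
The plan is to prove the four equivalences by establishing the chain $(1)\Leftrightarrow(3)$, $(3)\Leftrightarrow(2)$ and $(2)\Leftrightarrow(4)$, which together force all four statements to be equivalent. Throughout, write $V=\{x_{n-r+1},\ldots,x_n\}$ for the $r$ smallest variables with respect to $\prec$, let $\pi\colon R\to \bar{R}:=K[x_1,\ldots,x_{n-r}]$ be the $K$-algebra map sending each variable in $V$ to $0$, and set $\tilde{I}:=\pi(I)$. The induced order $\bar\prec$ on $\bar{R}$ is again the graded reverse lexicographic order, and there is a graded isomorphism $R/\langle I,x_{n-r+1},\ldots,x_n\rangle\cong \bar{R}/\tilde{I}$. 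I will use repeatedly the standard facts that $I$ and $\langle\mathrm{LM}_\prec(I)\rangle$ (and likewise $\tilde I$ and $\langle\mathrm{LM}_{\bar\prec}(\tilde I)\rangle$) share the same Hilbert function, so in particular $\mathrm{Krull.dim}(R/I)=\mathrm{Krull.dim}(R/\langle\mathrm{LM}_\prec(I)\rangle)=r$, and that a monomial quotient $\bar R/M$ is Artinian (Krull dimension $0$) exactly when some pure power $x_i^{s_i}$ lies in $M$ for every $i$ with $1\le i\le n-r$.

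For $(1)\Leftrightarrow(3)$ I would argue by graded Nakayama. Put $A:=R/I$ and let $B:=K[x_{n-r+1},\ldots,x_n]\to A$ be the structure map; then $A/\mathfrak{m}_B A\cong R/\langle I,x_{n-r+1},\ldots,x_n\rangle$, where $\mathfrak{m}_B$ is the homogeneous maximal ideal of $B$. Since $A$ is a non-negatively graded, finitely generated $K$-algebra, graded Nakayama shows that $A$ is a finitely generated $B$-module — equivalently integral over $B$ — if and only if $A/\mathfrak{m}_B A$ is a finite-dimensional $K$-vector space, i.e.\ $\mathrm{Krull.dim}(R/\langle I,x_{n-r+1},\ldots,x_n\rangle)=0$. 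This already gives $(1)\Rightarrow(3)$. For $(3)\Rightarrow(1)$ it remains to check injectivity of $B\to A$: if the extension is integral then $\mathrm{Krull.dim}(A)=\mathrm{Krull.dim}(B/(I\cap B))$, and since $\mathrm{Krull.dim}(A)=r=\mathrm{Krull.dim}(B)$ while $B$ is a domain, the ideal $I\cap B$ must be $0$.

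The heart of the argument is the degrevlex-specific behaviour of leading monomials under $\pi$, which yields $(3)\Leftrightarrow(2)$. The key fact I would isolate is: for homogeneous $f\in R$, the monomial $\mathrm{LM}_\prec(f)$ involves no variable of $V$ if and only if $\pi(f)\ne 0$, and in that case $\mathrm{LM}_{\bar\prec}(\pi(f))=\mathrm{LM}_\prec(f)$. This holds because, among monomials of a fixed degree, those carrying higher powers of the smallest variables are $\prec$-smaller, so a polynomial having any $V$-free term has $V$-free leading monomial. Granting this, a pure power $x_i^{s}$ with $i\le n-r$ lies in $\langle\mathrm{LM}_\prec(I)\rangle$ if and only if it lies in $\langle\mathrm{LM}_{\bar\prec}(\tilde I)\rangle$: from $g\in I$ with $\mathrm{LM}_\prec(g)=x_i^{s'}$ one gets $\mathrm{LM}_{\bar\prec}(\pi(g))=x_i^{s'}$, and conversely a witness $\bar h=\pi(f)\in\tilde I$ with $\mathrm{LM}_{\bar\prec}(\bar h)=x_i^{s'}$ forces $\mathrm{LM}_\prec(f)=x_i^{s'}$. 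Combining this with the monomial Artinian criterion applied to $\langle\mathrm{LM}_{\bar\prec}(\tilde I)\rangle$ and with the isomorphism $R/\langle I,x_{n-r+1},\ldots,x_n\rangle\cong\bar R/\tilde I$ turns condition $(3)$ into the statement that $\langle\mathrm{LM}_\prec(I)\rangle$ contains a pure power of each $x_i$ ($i\le n-r$), which is exactly $(2)$. Finally $(2)\Leftrightarrow(4)$ is the same monomial Artinian criterion applied directly to the monomial ideal $\langle\mathrm{LM}_\prec(I),x_{n-r+1},\ldots,x_n\rangle$, since a pure power $x_i^{s}$ lies in this ideal (for $i\le n-r$) precisely when it already lies in $\langle\mathrm{LM}_\prec(I)\rangle$.

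I expect the main obstacle to be the leading-monomial fact underlying $(3)\Leftrightarrow(2)$: it is exactly where the choice of the graded reverse lexicographic order (with $V$ the smallest variables) is indispensable, and the analogous statement fails for, e.g., the lexicographic order. Care is needed to verify that $\pi$ restricts $\prec$ to $\bar\prec$ on $V$-free monomials without reordering, that no cancellation occurs when passing to $\pi(f)$, and that the degenerate ranges $r\in\{0,1\}$ (together with the edge case where there are no variables $x_1,\ldots,x_{n-r}$) are handled uniformly; the Nakayama and monomial-criterion steps are then routine.
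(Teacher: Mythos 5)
Your proposal is correct, and it takes a genuinely different route from the paper. The paper gives no self-contained proof of Lemma \ref{lem:NPequi}: it cites \cite[Lemma 4.1]{BG01} for the equivalence of (2), (3), (4) and for the implication (1) $\Rightarrow$ (2), and it defers the one non-trivial implication --- that (2), (3), (4) imply (1) --- to \cite[Proposition 5.4]{LJ}. You decompose the statement differently, as (1) $\Leftrightarrow$ (3), (3) $\Leftrightarrow$ (2), (2) $\Leftrightarrow$ (4), and crucially you dispose of the hard direction yourself: graded Nakayama (which is valid for non-negatively graded modules with finite-dimensional graded pieces, without assuming module-finiteness in advance) shows that Artinianness of $R/\langle I,x_{n-r+1},\ldots,x_n\rangle$ forces $R/I$ to be module-finite, hence integral, over the image of $K[x_{n-r+1},\ldots,x_n]$, and injectivity of the structure map then follows because an integral extension preserves Krull dimension while a proper quotient of the $r$-dimensional domain $K[x_{n-r+1},\ldots,x_n]$ has dimension strictly less than $r$. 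Your degrevlex lemma --- a monomial free of the last $r$ variables is $\prec$-larger than any monomial of the same degree involving one of them, so for homogeneous $f$ the leading monomial survives the specialization $x_{n-r+1}=\cdots=x_n=0$ whenever any term does, with $\mathrm{LM}$ commuting with that specialization --- is exactly right and is the only place the choice of ordering enters; the lifting of witnesses through $\pi$ is legitimate since $I$ is homogeneous and $\pi$ is graded with no cancellation among surviving terms; and (2) $\Leftrightarrow$ (4) is the standard monomial Artinian criterion. What each approach buys: the paper's citation keeps the lemma short and rests on established results (Lejeune-Jalabert carrying the deep direction), whereas your argument is self-contained and elementary, making transparent that the non-trivial implication is nothing more than graded Nakayama plus the dimension theory of integral extensions.
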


In the proof of \cite[Lemma 4.1]{BG01}, the authors first proved the equivalence of the conditions (2), (3), and (4), and the implication (1) $\Rightarrow$ (2).
The non-trivial part is that (2), (3), and (4) imply (1), which follows from \cite[Proposition 5.4]{LJ}.

Here, we also recall the definition introduced by Hashemi~\cite{Hashemi2010} that a homogeneous ideal is in strong Noether position.

\begin{definition}[{\cite[{Definition 2.4 and} Theorem 2.19]{Hashemi2010}}]
    A homogeneous ideal $I$ of $R$ is said to be in {\it strong Noether position} (SNP) with respect to the ordered variables $x_1,\ldots,x_n$ if we have $I^{\rm sat} = (I:x_n^{\infty})$ and
    \[
    {(I + \langle x_{n-i+1},\ldots,x_n \rangle)^{\rm sat}} = ( {(I+\langle x_{n-i+1},\ldots,x_n \rangle)} : x_{n-i}^{\infty})
    \]
    for any $i$ with \textcolor{black}{$1 \leq i \leq r-1$}.
\end{definition}

For equivalent definitions, see \cite[Section 2]{Hashemi2010} for details.
Note that $I$ is in SNP if and only if $\langle \mathrm{LM}_{\prec}(I) \rangle$ is in SNP {(cf.\ \cite[Proposition 2.11]{Hashemi2010})}.
Hashemi also proved the following inequality:

\begin{theorem}[{\cite[Proposition 4.7 and Theorem 4.17]{Hashemi2010}}]\label{thm:SNPHilb}
    If $I$ is in SNP, then $\mathrm{reg}(I)$ is equal to
    \[
    \overline{\rm hilb}(I) := \max \{ \mathrm{hilb}(I) \} \cup \{ \mathrm{hilb}(\langle I, x_{n-i+1},\ldots,x_n \rangle) \mid 1\leq i \leq {\rm Krull.dim}(R/I) \},
    \]
    which is called the stabilized Hilbert regularity of $I$, and moreover
    \[
    \mathrm{max.GB.deg}_{\prec}(I) \leq \overline{\rm hilb}(I)
    \]
    for the graded {reverse} lexicographical ordering $\prec$ on $R$ with $x_n \prec \cdots \prec x_1$.
\end{theorem}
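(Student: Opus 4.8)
The plan is to prove the regularity identity $\mathrm{reg}(I)=\overline{\rm hilb}(I)$ first and then deduce the Gr\"obner-basis degree bound from it. For the latter reduction, recall that the leading monomials of the reduced Gr\"obner basis of $I$ are exactly the minimal generators of the monomial ideal $B:=\langle \mathrm{LM}_{\prec}(I)\rangle_R$, so $\mathrm{max.GB.deg}_{\prec}(I)$ is the top degree of a minimal generator of $B$, which is bounded by $\mathrm{reg}(B)$. Since Gr\"obner degeneration is flat, one always has $\mathrm{reg}(B)\geq \mathrm{reg}(I)$; the content of the SNP hypothesis is that, for the reverse lexicographic order with $x_n\prec\cdots\prec x_1$ appearing in the statement, the equality $\mathrm{reg}(B)=\mathrm{reg}(I)$ holds (the Bayer--Stillman criterion, which the saturation conditions defining SNP are tailored to guarantee). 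Granting $\mathrm{reg}(I)=\overline{\rm hilb}(I)$ we then get $\mathrm{max.GB.deg}_{\prec}(I)\leq \mathrm{reg}(B)=\mathrm{reg}(I)=\overline{\rm hilb}(I)$, which is the second assertion.

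For the identity $\mathrm{reg}(I)=\overline{\rm hilb}(I)$ I would induct on $r=\mathrm{Krull.dim}(R/I)$. When $r=0$ the ring $R/I$ is Artinian, there are no hyperplane sections, and $\mathrm{reg}(I)=\mathrm{hilb}(I)=\overline{\rm hilb}(I)$ by the Artinian case in \eqref{eq:Hilb} together with $\mathrm{reg}(R/I)=\max\{d:(R/I)_d\neq 0\}$. For the inductive step, the first SNP condition $I^{\rm sat}=(I:x_n^{\infty})$ says exactly that $x_n$ is a filter-regular linear form for $R/I$, i.e.\ $(0:_{R/I}x_n)$ has finite length. The remaining SNP conditions are precisely the SNP conditions for $\langle I,x_n\rangle$ with respect to $x_1,\ldots,x_{n-1}$, and $\mathrm{Krull.dim}(R/\langle I,x_n\rangle)=r-1$, so the inductive hypothesis applies to $\langle I,x_n\rangle$; since $\langle\langle I,x_n\rangle,x_{n-i+1},\ldots,x_{n-1}\rangle=\langle I,x_{n-i},\ldots,x_n\rangle$, unrolling yields $\overline{\rm hilb}(I)=\max\{\mathrm{hilb}(I),\,\overline{\rm hilb}(\langle I,x_n\rangle)\}$. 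Thus the whole identity reduces to the one-step recursion
\[
\mathrm{reg}(R/I)=\max\{\mathrm{reg}(R/\langle I,x_n\rangle),\;\mathrm{hilb}(I)-1\},
\]
valid whenever $x_n$ is filter-regular and $r\geq 1$; adding $1$ and using $\mathrm{reg}(I)=\mathrm{reg}(R/I)+1$ gives $\mathrm{reg}(I)=\max\{\mathrm{reg}(\langle I,x_n\rangle),\mathrm{hilb}(I)\}=\overline{\rm hilb}(I)$ by the inductive hypothesis.

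To establish the displayed recursion I would pass to the saturation $M'=R/I^{\rm sat}$, on which $x_n$ is a genuine nonzerodivisor (filter-regularity plus saturatedness forces $(0:_{M'}x_n)=0$). From $0\to M'(-1)\xrightarrow{x_n}M'\to M'/x_nM'\to 0$ and the long exact sequence in local cohomology one reads off the end-degrees $a_i(M')=\mathrm{end}\,H^i_{\mathfrak m}(M')$ in terms of those of $M'/x_nM'$, recovering the regular-element statement $\mathrm{reg}(M'/x_nM')=\mathrm{reg}(M')$. One then compares, on one side, $M'$ with $M=R/I$ (they differ only by the finite-length module $H^0_{\mathfrak m}(M)=I^{\rm sat}/I$, so $H^i_{\mathfrak m}(M)=H^i_{\mathfrak m}(M')$ for $i\geq 1$ and the discrepancy is governed by $\mathrm{end}\,H^0_{\mathfrak m}(M)$) and, on the other side, $M'/x_nM'=R/\langle I^{\rm sat},x_n\rangle$ with $\overline M=R/\langle I,x_n\rangle$ (again differing by a finite-length submodule). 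Tracking these two finite-length corrections and identifying the resulting threshold with $\mathrm{hilb}(I)-1=\max\{d:\mathrm{HF}_{R/I}(d)\neq\mathrm{HP}_{R/I}(d)\}$ through $\mathrm{HF}_{R/I}(d)-\mathrm{HP}_{R/I}(d)=\sum_i(-1)^i\dim_K H^i_{\mathfrak m}(R/I)_d$ gives the recursion.

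The main obstacle is exactly this last bookkeeping: the raw inequality $\mathrm{hilb}(I)-1\leq\max_i a_i(R/I)\leq\mathrm{reg}(R/I)$ is easy, but pinning the correction term down to the Hilbert regularity $\mathrm{hilb}(I)-1$ on the nose --- rather than some larger end-degree --- requires that the alternating sum defining $\mathrm{HF}-\mathrm{HP}$ exhibit no accidental cancellation in the critical degree, and it is precisely the filter-regularity supplied by SNP that rules this out. A secondary technical point, needed for the Gr\"obner-basis reduction, is the Bayer--Stillman equality $\mathrm{reg}(B)=\mathrm{reg}(I)$ for the reverse lexicographic order; this also rests on the SNP saturation conditions, equivalently on the fact (compatible with Lemma \ref{lem:NPequi}) that SNP is inherited by $B=\langle\mathrm{LM}_{\prec}(I)\rangle_R$ and is preserved under successively adjoining $x_n,x_{n-1},\ldots$, so that $\mathrm{in}_{\prec}$ commutes with the hyperplane sections used throughout the induction.
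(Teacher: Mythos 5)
A preliminary remark: the paper does not prove this statement at all --- it is imported verbatim from Hashemi \cite{Hashemi2010} (Proposition 4.7 and Theorem 4.17) --- so there is no internal proof to compare yours against, and I assess your argument on its own terms. (Also, ``graded lexicographic'' in the statement is evidently a typo for graded \emph{reverse} lexicographic, which is the ordering in Hashemi's results and everywhere else in this paper; you silently, and correctly, work with revlex.) Your architecture --- induction on $r=\mathrm{Krull.dim}(R/I)$ via the filter-regular section $x_n$, plus passage to the initial ideal for the Gr\"obner bound --- is sound, and the one-step recursion you isolate, $\mathrm{reg}(R/I)=\max\{\mathrm{reg}(R/\langle I,x_n\rangle),\,\mathrm{hilb}(I)-1\}$, is in fact true for a filter-regular $x_n$. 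The genuine gap is that you never prove it: you defer precisely this step as ``the main obstacle'' and then assert that ``the filter-regularity supplied by SNP'' rules out cancellation in $\mathrm{HF}_{R/I}(d)-\mathrm{HP}_{R/I}(d)=\sum_i(-1)^i\dim_K H^i_{\mathfrak{m}}(R/I)_d$. That attribution is wrong: filter-regularity of $x_n$ gives no control on the relative sizes of $H^0_{\mathfrak{m}}(R/I)_d$ and $H^1_{\mathfrak{m}}(R/I)_d$. What actually closes the argument is different and unconditional. Write $M=R/I$ and let $a_i(M)$ be the top degree in which $H^i_{\mathfrak{m}}(M)\neq 0$. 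Filter-regularity enters only through the standard recursion $\mathrm{reg}(M)=\max\{a_0(M),\mathrm{reg}(M/x_nM)\}$. If $\mathrm{reg}(M)=\mathrm{reg}(M/x_nM)$, you are done because $\mathrm{hilb}(I)-1\leq\mathrm{reg}(M)$ always (Grothendieck--Serre, as in the paper's own remark). If instead $\mathrm{reg}(M)=a_0(M)>\mathrm{reg}(M/x_nM)$, then for every $i\geq 1$ one has $a_i(M)\leq\mathrm{reg}(M)-i<a_0(M)$, so in the single degree $d=a_0(M)$ all higher local cohomology vanishes and $\mathrm{HF}_{R/I}(d)-\mathrm{HP}_{R/I}(d)=\dim_K H^0_{\mathfrak{m}}(M)_d>0$; hence $\mathrm{hilb}(I)-1\geq a_0(M)$, and so $\mathrm{hilb}(I)-1=\mathrm{reg}(M)$. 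The non-cancellation is forced by the vanishing of $H^i_{\mathfrak{m}}$ above degree $\mathrm{reg}-i$, not by filter-regularity; without this argument your induction does not close.

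A second, lesser gap is the Gr\"obner half. You invoke ``the Bayer--Stillman criterion'' to get $\mathrm{reg}(B)=\mathrm{reg}(I)$ for $B=\langle\mathrm{LM}_{\prec}(I)\rangle_R$ under SNP. Bayer--Stillman \cite{BS} prove this equality in \emph{generic} coordinates; whether it survives under the much weaker SNP hypothesis is essentially the content of the theorem being proved, so quoting it as a black box begs the question. The clean way to finish is to avoid it altogether: SNP of $I$ is equivalent to SNP of $B$ (noted in the paper just before the theorem), and the unconditional revlex identities $\langle\mathrm{LM}_{\prec}(\langle I,x_{n-i+1},\ldots,x_n\rangle)\rangle_R=\langle \mathrm{LM}_{\prec}(I),x_{n-i+1},\ldots,x_n\rangle_R$ show that $I$ and $B$ have the same Hilbert functions at every stage of the flag, hence $\overline{\mathrm{hilb}}(B)=\overline{\mathrm{hilb}}(I)$. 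Applying the already-established identity $\mathrm{reg}=\overline{\mathrm{hilb}}$ to the monomial ideal $B$ then gives $\mathrm{reg}(B)=\overline{\mathrm{hilb}}(I)$; since $\mathrm{max.GB.deg}_{\prec}(I)$ is the top degree of a minimal generator of $B$, which is at most $\mathrm{reg}(B)$, the bound follows, and your desired equality $\mathrm{reg}(B)=\mathrm{reg}(I)$ drops out as a corollary rather than serving as an input.
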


\begin{lemma}[{\cite[Lemma 23]{HS}} and {\cite[Proposition 3.2 (v)]{HSS18}}]\label{lem:KrullDim1Equi}
    If $r = 1$, then the following are all equivalent:
    \begin{enumerate}
        \item[(1)] $I$ is in Noether position with respect to $x_1,\ldots,x_{n-1}$ (or one of the equivalent conditions given in Lemma \ref{lem:NPequi} holds).
        \item[(2)] $I$ is in quasi stable position with respect to the ordered variables $x_1,\ldots,x_n$ (see e.g., \cite[Definition 6]{HS} for definition).
        \item[(3)] $I$ is in strong Noether position.
    \end{enumerate}
\end{lemma}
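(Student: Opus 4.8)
The plan is to reduce all three conditions, in the special case $r=1$, to the single geometric requirement that $x_n$ vanish at no point of $V_{\overline{K}}(I)$, and then to invoke the two cited results to link this requirement to the combinatorial notion of quasi stable position.

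First I would note that for $r=1$ the definition of strong Noether position collapses: the family of saturation equalities $\langle I, x_{n-i+1},\ldots,x_n\rangle^{\rm sat} = (\langle I, x_{n-i+1},\ldots,x_n\rangle : x_{n-i}^{\infty})$ is indexed by $1\leq i\leq r-1=0$ and is therefore empty, so (3) is equivalent to the single equality $I^{\rm sat}=(I:x_n^{\infty})$.

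Next I would prove (1) $\Leftrightarrow$ (3) directly through associated primes. Since $\langle x_n\rangle\subseteq\mathfrak{m}$ one always has $I^{\rm sat}=(I:\mathfrak{m}^{\infty})\subseteq(I:x_n^{\infty})$. Fixing a minimal primary decomposition $I=\bigcap_j Q_j$ with $P_j=\sqrt{Q_j}$, the componentwise computation of saturations gives $(I:x_n^{\infty})=\bigcap_{x_n\notin P_j}Q_j$ and $I^{\rm sat}=\bigcap_{P_j\neq\mathfrak{m}}Q_j$. Because $\mathrm{Krull.dim}(R/I)=1$, every associated prime $P_j\neq\mathfrak{m}$ is a one-dimensional minimal prime (an embedded prime would strictly contain such a prime and hence equal $\mathfrak{m}$). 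Consequently $I^{\rm sat}=(I:x_n^{\infty})$ holds if and only if $x_n\notin P_j$ for every minimal prime $P_j$, i.e.\ no point of $V_{\overline{K}}(I)$ lies on $\{x_n=0\}$, i.e.\ $V_{\overline{K}}(\langle I,x_n\rangle)=\emptyset$, i.e.\ $\mathrm{Krull.dim}(R/\langle I,x_n\rangle)=0$. By Lemma \ref{lem:NPequi} (its condition (3) with $r=1$) this last statement is precisely (1).

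Finally I would obtain (1) $\Leftrightarrow$ (2) from \cite[Lemma 23]{HS}, which for a one-dimensional quotient identifies Noether position with quasi stable position; alternatively one may read off (2) $\Leftrightarrow$ (3) from \cite[Proposition 3.2 (v)]{HSS18}. Either equivalence, combined with the (1) $\Leftrightarrow$ (3) argument above, closes the cycle. I expect the main obstacle to be the bridge to (2): the quasi stable property is a combinatorial condition on the degrevlex leading-term ideal $\langle\mathrm{LM}_{\prec}(I)\rangle$, and matching it to the filter-regularity of $x_n$ underlying (1) and (3) is most cleanly done by borrowing \cite[Lemma 23]{HS} rather than re-deriving the combinatorics. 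A secondary point to handle with care is that the associated-prime argument should be phrased so as to be insensitive to the base field: saturation, Krull dimension, and the vanishing locus are all stable under the extension $K\subseteq\overline{K}$, so one may interpret the condition geometrically over $\overline{K}$ without affecting the algebraic equalities over $K$.
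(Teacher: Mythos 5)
Your proposal is correct, but it takes a genuinely different route from the paper. The paper's proof is purely citation-based: it obtains (1) $\Leftrightarrow$ (2) from \cite[Lemma 23]{HS} and (2) $\Leftrightarrow$ (3) from \cite[Proposition 3.2 (v)]{HSS18}, and nothing more. You instead observe that for $r=1$ the SNP definition degenerates to the single equality $I^{\rm sat}=(I:x_n^{\infty})$ (the indexed family being empty), and you prove (1) $\Leftrightarrow$ (3) from scratch via primary decomposition: since $r=1$ forces every associated prime other than $\mathfrak{m}$ to be a one-dimensional minimal prime, the equality of the two saturations is equivalent to $x_n$ avoiding all minimal primes, which matches condition (3) of Lemma \ref{lem:NPequi}. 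You then need only one external input, \cite[Lemma 23]{HS}, to bring in the quasi-stable condition (2). What your approach buys is self-containedness (the dependence on \cite{HSS18} disappears entirely) and a transparent explanation of what SNP actually says when $r=1$, namely that $x_n$ is a filter-regular linear form; what the paper's approach buys is brevity. Two small refinements to yours: first, the irredundancy of the decomposition should be invoked to see that dropping a component with $x_n\in P_{j_0}$ strictly enlarges the intersection, so that the ``only if'' direction of your equivalence is airtight; second, the passage ``$x_n\notin P_j$ for all minimal primes iff no point of $V_{\overline{K}}(I)$ lies on $\{x_n=0\}$'' is cleanest done without geometry at all --- $x_n$ lies in some one-dimensional minimal prime $P_j$ iff $\langle I,x_n\rangle\subseteq P_j$ iff $\mathrm{Krull.dim}(R/\langle I,x_n\rangle)\geq 1$, since any one-dimensional prime containing $I$ is automatically minimal over $I$ --- which avoids the Nullstellensatz and Galois-orbit considerations your field-insensitivity caveat is implicitly compensating for.
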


\begin{proof}
    The equivalence of (1) and (2) is proved in {\cite[Lemma 23]{HS}}, and that of (2) and (3) follows from \cite[Proposition 3.2 (v)]{HSS18}.
\end{proof}

By Theorem \ref{thm:SNPHilb} and Lemma \ref{lem:KrullDim1Equi}, we obtain the following theorem:

\begin{theorem}[{\cite[Proof of Theorem 30]{HS}}]\label{thm:HS}
Assume that $r=1$.
If $R/( I + \langle x_n \rangle)$ is Artinian (or one of the equivalent conditions given in Lemma \ref{lem:KrullDim1Equi} holds), then we have the inequality \eqref{eq:HS} for the graded reverse lexicographical ordering $\prec$ on $R$ with $x_n \prec \cdots \prec x_1$.
\end{theorem}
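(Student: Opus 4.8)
The plan is to reduce the statement to the machinery already assembled in Theorem~\ref{thm:SNPHilb} and Lemma~\ref{lem:KrullDim1Equi}, so that the inequality \eqref{eq:HS} falls out of the stabilized Hilbert regularity $\overline{\mathrm{hilb}}(I)$ once $I$ is shown to lie in strong Noether position. In other words, I expect no new mathematical content beyond carefully chaining the cited results together under the specialization $r = 1$.

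First I would translate the hypothesis. The assumption that $R/\langle I, x_n \rangle$ is Artinian is precisely $\mathrm{Krull.dim}(R/\langle I, x_n \rangle) = 0$. Since $r = \mathrm{Krull.dim}(R/I) = 1$, we have $n - r = n - 1$ and $x_{n-r+1}, \ldots, x_n = x_n$, so this Artinian condition is exactly condition~(3) of Lemma~\ref{lem:NPequi}. Hence $I$ is in Noether position with respect to $x_1, \ldots, x_{n-1}$. Invoking Lemma~\ref{lem:KrullDim1Equi}, in which the three position notions coincide when $r = 1$, I conclude that $I$ is in strong Noether position. Next I would feed this into Theorem~\ref{thm:SNPHilb}, obtaining $\mathrm{max.GB.deg}_{\prec}(I) \leq \overline{\mathrm{hilb}}(I)$ for the graded reverse lexicographic ordering with $x_n \prec \cdots \prec x_1$. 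It then remains to unwind $\overline{\mathrm{hilb}}(I)$: by definition it is the maximum of $\mathrm{hilb}(I)$ together with the values $\mathrm{hilb}(\langle I, x_{n-i+1}, \ldots, x_n \rangle)$ for $1 \leq i \leq r$. Because $r = 1$, the index $i$ ranges only over $\{1\}$, and the single extra term is $\mathrm{hilb}(\langle I, x_n \rangle)$. Therefore $\overline{\mathrm{hilb}}(I) = \max\{\mathrm{hilb}(I), \mathrm{hilb}(\langle I, x_n \rangle)\}$, which is exactly the right-hand side of \eqref{eq:HS}, completing the argument.

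I do not anticipate a genuine obstacle here, since the substantive work is done by the cited Theorem~\ref{thm:SNPHilb} and the equivalences in Lemmas~\ref{lem:NPequi} and~\ref{lem:KrullDim1Equi}. The only point demanding care is bookkeeping of the monomial ordering: one must apply Theorem~\ref{thm:SNPHilb} for the same graded reverse lexicographic ordering (with $x_n$ smallest) as in the target inequality, and confirm that the Noether and strong-Noether positions are taken with respect to the ordered variables $x_1, \ldots, x_n$ compatible with that ordering, rather than any other ordering. Once the orderings are aligned, the specialization $r = 1$ collapses the stabilized Hilbert regularity to the displayed two-term maximum and the result follows immediately.
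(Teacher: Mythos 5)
Your proof is correct and is essentially the paper's own argument: the paper obtains Theorem \ref{thm:HS} precisely by using Lemma \ref{lem:KrullDim1Equi} (through the equivalences of Lemma \ref{lem:NPequi}) to place $I$ in strong Noether position and then invoking Theorem \ref{thm:SNPHilb}, whose stabilized Hilbert regularity $\overline{\mathrm{hilb}}(I)$ collapses to $\max\{\mathrm{hilb}(\langle I,x_n\rangle),\mathrm{hilb}(I)\}$ when $r=1$. Your caution about aligning the monomial ordering is well placed, since the paper's statement of Theorem \ref{thm:SNPHilb} reads ``graded lexicographic ordering'' where the ordering actually intended and needed is the graded \emph{reverse} lexicographic one with $x_n$ smallest, exactly as you use it.
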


%In the rest of this subsection, we 
{Here, we recall a fact that, if $K$ contains sufficiently many elements, the assumption $\mathrm{Krull.dim}(R/( I + \langle x_n \rangle))=0$ in Theorem \ref{thm:HS} is satisfied after a suitable linear changer of coordinates.
In the rest of this paper, we explain this fact in details, since we could not find any appropriate reference.}

{
Although it may be an easy consequence of the theory of hyperplane sections, there exists a hyperplane \textcolor{black}{defined over} $K$ that does not pass through any of given finite points in a projective space over $\overline{K}$ if $K$ is large enough.
Here, we prove a somehow detailed argument:}

\begin{lemma}\label{lem:finitePoints}
    {{Let $\bm{p}_1,\ldots,\bm{p}_N$ be mutually distinct points} in $\mathbb{P}^{n-1}(\overline{K})$.
    If the cardinality of $K$ is greater than $N$, then there exists a linear form $\ell \in R$ that does not vanish 
    at any of {$\bm{p}_1,\ldots,\bm{p}_N$}.}
\end{lemma}

\begin{proof}
{
For each point {$\bm{p}_k=(p_{1,k}: \cdots : p_{n,k}) \in \mathbb{P}^{n-1}(\overline{K})$}, we denote by $S_k$ the set of all linear forms that vanish at {$\bm{p}_k$}, say
\[
S_{k} = \{ y_1 x_1 + \cdots + y_n x_n \mid y_i \in K \mbox{ and } 
 y_1 p_{1,k} + \cdots {+} y_n p_{n,k} = 0 \},
%y_1 a_1 + \cdots {+} y_n a_n = 0 \},
\]
which is isomorphic, as $K$-linear spaces, to the following $K$-linear space of $K^n$:
\[
S_{k}' = \{ (y_1,\ldots, y_n) \in K^n \mid {p_{1,k}y_1 + \cdots {+}p_{n,k}y_n = 0\}}.
%a_1y_1 + \cdots {+}a_ny_n = 0\}.
\]
The dimension of $S_{k}'$ as a $K$-linear space is at most $n-1$.
Indeed, for an arbitrary non-zero entry \textcolor{black}{$p_{i,k}$} of {$\bm{p}_k$}, 
the vector with $1$ in the $i$-th entry and $0$'s elsewhere does not belong to $S_k'$, 
so that $S_k'$ is a proper subspace of $K^n$.}

{
\textcolor{black}{
When $K$ is an infinite field, it follows from the irreducibility of an affine space over $K$ that $\bigcup_{k=1}^N S'_k$ cannot cover $K^n$.
When $K$ is a finite field of order $q$, we have}
\[
\textcolor{black}{
\# \left( \bigcup_{k=1}^N S_{k}'\right) \leq \sum_{k=1}^N \# S_k' \leq N \cdot q^{n-1} < q^n},
\]
whence a desired linear form over $K$ exists.}
\end{proof}

% For a linear transformation $\sigma$ of variables $x_1,\ldots, x_{n}$ over $K$ represented by a $n \times n$ matrix $P$ over $K$, we denote by $h^{\sigma}:= h(\bm{x} \cdot P)$ the image of $h\in R$ by $\sigma$, where $\bm{x}:=(x_1,\ldots,x_n)$.

% %We shall prove (1) and (2) of Theorem \ref{thm:mainT}.
% Let $\prec$ be the graded reverse lexicographic ordering on $R$ with $x_n \prec \cdots \prec x_1$.
% We denote by \textcolor{red}{$N(I)$} the number of its projective zeros over $\overline{K}$ of the homogeneous ideal \textcolor{red}{$I =\langle f_1,\ldots,f_m\rangle \subset R$}.
% Throughout this subsection, assume that $\mathrm{Krull.dim}(R/I)\leq 1$, i.e., \textcolor{red}{$N(I) < \infty$}.

\begin{lemma}\label{lem:linear-change}
    {With notation as above, assume that $\mathrm{Krull.dim}(R/I) \leq 1$.
    If the cardinality of $K$ is greater than {$\#V_{\overline{K}}(I)$}, then there exists a linear form $\ell \in R$ such that $\mathrm{Krull.dim}(R/(I+\langle \ell \rangle))=0$.
    %with $\ell \notin I$ such that .
    Moreover, there also exists a suitable linear coordinate change $\sigma$ sending $\ell$ to $x_n$ such that $\mathrm{Krull.dim}(R/(I^{\sigma}+\langle x_n \rangle))=0$.}
\end{lemma}

\begin{proof}
{
The first part follows from Lemma \ref{lem:finitePoints}.
There exists a $k \in \{1,\ldots,n\}$ such that {the} $x_{k}$-coefficient of $\ell$ is not zero.
We may also assume that the $x_k$-coefficient is $1$, without loss of generality.
Let $\sigma_1$ be the linear transformation of variables $x_1,\ldots,x_{n}$ defined by a permutation exchanging $x_k$ and $x_n$, and write the image 
of $\ell$ by $\sigma_1$ as $\ell^{\sigma_1} = a_0 x_0 + a_1x_1 + \cdots + a_n x_n$ for $a_i \in K$ with $a_{n} = 1$.
Then, let $\sigma_2$ be a linear transformation of variables $x_1,\ldots,x_{n}$ defined by
\[
    h^{\sigma_2} := h(x_1,\ldots, x_{n-1},x_{n}-a_1x_1-\cdots - a_{n-1} x_{n-1})
\]
for $h \in R$.
Then, the invertible transformation $\sigma:=\sigma_2 \circ \sigma_1$ of variables $x_1,\ldots,x_n$ sends $\ell$ to $x_{n}$, so that $\mathrm{Krull.dim}(R/(I^{\sigma}+\langle x_n \rangle))=0$.}
\end{proof}

\begin{remark}\label{rem:surj}
{
In Lemma \ref{lem:linear-change}, we have an exact sequence
\[{
	\xymatrix{
	 (R/I)_{d'-1} \ar[r]^{\times \ell} & (R/I)_{d'} \ar[r] & (R/( I+ \langle \ell \rangle))_{d'} \ar[r] & 0.    \\
	}}
\]
for each $d' \geq 1$.
Therefore, the minimal $d$ for which the multiplication-by-$\ell$ map $(R/I)_{d'-1} \to (R/I)_{d'}$ is surjective for any $d' \geq d$ is equal to $d_{\rm reg}(I+\langle  \ell \rangle)$.}
% This follows from the following exact sequence:
% \[{
% 	\xymatrix{
% 	 (R/I)_{d'-1} \ar[r]^{\times \ell} & (R/I)_{d'} \ar[r] & (R/( I+ \langle \ell \rangle))_{d'} \ar[r] & 0.    \\
% 	}}
% \]
    %Note that, by a choice of $\ell$, the Krull dimension of $R/( I+\langle \ell \rangle)$ is zero.
% We also note that the multiplication-by-$\ell$ map $(R/I)_{d'-1} \to (R/I)_{d'}$ is surjective for any $d'$ with $d' \geq d$.
% %We claim that $R'/\langle I,\ell \rangle$ is Artinian.
% Indeed, 
% For $d' \geq 1$, the multiplication-by-$\ell$ map $(R/I)_{d'-1} \to (R/I)_{d'}$ is surjective if and only if $(R/I)_{d'}=0$, which follows from the following exact sequence:
% \[{
% 	\xymatrix{
% 	 (R/I)_{d'-1} \ar[r]^{\times \ell} & (R/I)_{d'} \ar[r] & (R/( I+ \langle \ell \rangle))_{d'} \ar[r] & 0.    \\
% 	}}
% \]
% When $d'=d$, we have $(R/(I+\langle \ell \rangle))_d = 0$, by the surjectivity of the multiplication map $(R/I)_{d'-1} \to (R/I)_{d'}$ by $\ell$.
% Thus, it follows that $(R/ (I+\langle \ell \rangle))_{d'}=0$ for any $d'\geq d$, 
% from which the multiplication-by-$\ell$ map $(R/I)_{d'-1} \to (R/I)_{d'}$ is surjective for any $d'$ with $d' \geq d$.
% %Therefore, the multiplication-by-$\ell$ map $(R/I)_{d'-1} \to (R/I)_{d'}$ is surjective for any $d'$ with $d' \geq d$.
% Note that the minimal $d$ for which the map is surjective is equal to $d_{\rm reg}(I+\langle  \ell \rangle_R)$.
\end{remark}

\subsection{Generalized semi-regular sequences}

In this subsection, we recall the definition of cryptographic semi-regular sequences and that of generalized ones.
Throughout this subsection, let $f_1, \ldots , f_m \in R$ be {\it homogeneous} polynomials of {\it positive} degrees $d_1, \ldots , d_m$, and put $I = \langle f_1, \ldots , f_m \rangle$.

\begin{theorem}[cf.\ {\cite[Theorem 1]{Diem2}}]\label{lem:Diem2}
% Let $f_1, \ldots , f_m \in R$ be homogeneous polynomials of positive degrees $d_1, \ldots , d_m$ respectively, and put $I = \langle f_1, \ldots , f_m \rangle_R$.
For each $d$ with $d \geq \mathrm{max}\{d_i : 1 \leq i \leq m \}$, the following are equivalent:
\begin{enumerate}
    \item[(1)] For each $i$ with $1 \leq i \leq m$, if a homogeneous polynomial $g \in R$ satisfies $g f_i  \in \langle f_1, \ldots , f_{i-1} \rangle_R$ and $\mathrm{deg}(g f_i) < d$, then we have $g \in \langle f_1, \ldots , f_{i-1} \rangle_R$, namely, the multiplication-by-$f_i$ map $(R/\langle f_1,\ldots,f_{i-1}\rangle)_{t-d_i} \longrightarrow (R/\langle f_1,\ldots,f_{i-1}\rangle)_{t}$ is injective for any $t$ with $d_i \leq t < d$.
    \item[(2)] We have
    \begin{equation}\label{eq:dregHil}
    {\rm HS}_{R/I}(z) \equiv \frac{\prod_{j=1}^{m}(1-z^{d_j})}{(1- z)^n} \pmod{z^d},
    \end{equation}
    where $\mathrm{HS}$ denotes the Hilbert series.
    \item[(3)] We have $H_1 (K_{\bullet}(f_1, \ldots , f_m))_{\leq d-1} = 0$, where $K_{\bullet}(f_1,\ldots,f_m)$ denotes the Koszul complex on the sequence $(f_1,\ldots,f_m)$.
\end{enumerate}
\end{theorem}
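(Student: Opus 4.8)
The plan is to prove the chain of equivalences (1) $\Leftrightarrow$ (2) and (1) $\Leftrightarrow$ (3), using condition (1) as a hub since it is the most elementary of the three (a statement about kernels of individual multiplication maps). Throughout, write $I_0 = 0$ and $I_i = \langle f_1,\ldots,f_i\rangle_R$ for $1 \le i \le m$, so that $I_m = I$, and for each $i$ consider the graded three-term complex
\[
(R/I_{i-1})(-d_i) \xrightarrow{\ \cdot f_i\ } R/I_{i-1} \longrightarrow R/I_i \longrightarrow 0,
\]
whose right-hand map is always surjective with kernel the image of $\cdot f_i$. Condition (1) is precisely the assertion that the left-hand map $\cdot f_i$ is injective in every degree $t$ with $t < d$, for every $i$.

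For (1) $\Leftrightarrow$ (2), I would first record, for fixed $i$ and each $t$, the dimension identity $\dim_K (R/I_i)_t = \dim_K (R/I_{i-1})_t - \dim_K (R/I_{i-1})_{t-d_i} + k_{i,t-d_i}$, where $k_{i,s} := \dim_K \ker\bigl(\cdot f_i : (R/I_{i-1})_s \to (R/I_{i-1})_{s+d_i}\bigr) \ge 0$. Setting $C_i(z) = \sum_s k_{i,s} z^s$, which has non-negative coefficients, this reads ${\rm HS}_{R/I_i}(z) = (1 - z^{d_i}){\rm HS}_{R/I_{i-1}}(z) + z^{d_i} C_i(z)$. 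Writing $P_i(z) = \prod_{j=1}^i (1-z^{d_j})/(1-z)^n$ and unrolling from ${\rm HS}_R = P_0$, one reaches the closed form
\[
{\rm HS}_{R/I}(z) - P_m(z) = \sum_{i=1}^m z^{d_i} C_i(z) \prod_{j=i+1}^m (1-z^{d_j}).
\]
The forward direction (1) $\Rightarrow$ (2) is then immediate, since (1) forces $C_i(z) \equiv 0 \pmod{z^{d-d_i}}$, so every summand is $\equiv 0 \pmod {z^d}$. For the converse I would run a minimal-degree argument: if some $z^{d_i}C_i(z)$ had a nonzero coefficient in a degree $< d$, let $e < d$ be the least degree carrying a nonzero coefficient among all the $z^{d_i}C_i(z)$; since each factor $\prod_{j>i}(1-z^{d_j})$ has constant term $1$, the coefficient of $z^e$ on the right-hand side equals the sum of the (strictly positive) leading coefficients of those $z^{d_i}C_i(z)$ attaining the minimum $e$, contradicting (2).

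For (1) $\Leftrightarrow$ (3) I would induct on $m$, the case $m=1$ being clear as $f_1$ is a nonzerodivisor. The Koszul complex factors as $K_\bullet(f_1,\ldots,f_m) = K_\bullet(f_1,\ldots,f_{m-1}) \otimes_R K_\bullet(f_m)$, producing a short exact sequence of complexes and hence a long exact homology sequence whose connecting maps are multiplication by $\pm f_m$. Abbreviating $\underline{f}' = (f_1,\ldots,f_{m-1})$ and $\underline{f} = (f_1,\ldots,f_m)$, the relevant segment gives, in each degree $t$, the exactness of
\[
H_1(\underline{f}')_{t-d_m} \xrightarrow{\ f_m\ } H_1(\underline{f}')_t \longrightarrow H_1(\underline{f})_t \longrightarrow \ker\bigl(\cdot f_m : (R/I_{m-1})_{t-d_m} \to (R/I_{m-1})_t\bigr) \longrightarrow 0 .
\]
Thus $H_1(\underline{f})_t = 0$ forces both the injectivity of $\cdot f_m$ on $R/I_{m-1}$ (the $i=m$ case of (1)) and the surjectivity of $\cdot f_m$ on $H_1(\underline{f}')$ in degree $t$. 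A minimal-degree (Nakayama-type) argument applied to the finitely generated graded module $H_1(\underline{f}')$ then upgrades this surjectivity for all $t \le d-1$ to the vanishing $H_1(\underline{f}')_t = 0$ for $t \le d-1$, whence the induction hypothesis supplies the cases $i < m$; the reverse implication reads off the same exact sequence.

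The main obstacle I anticipate lies in the two backward directions, where positivity and minimality must be invoked to rule out cancellation: in (2) $\Rightarrow$ (1), preventing the non-negative contributions $C_i$ from cancelling against one another across the summation, and in (3) $\Rightarrow$ (1), converting surjectivity of a single multiplication-by-$f_m$ into the actual vanishing of $H_1(\underline{f}')$. Careful bookkeeping of the degree shifts $-d_i$ and of the precise range $t<d$ versus $t \le d-1$ will be needed so that the two minimal-degree arguments align exactly with the standing hypothesis $d \ge \max_i d_i$.
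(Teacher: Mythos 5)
Your proof is correct. Note, however, that the paper itself contains no proof of this statement: it is imported as a citation to Diem's paper (cf.\ \cite{Diem2}), so the only proof to compare against is Diem's original one, which your argument essentially reconstructs in a self-contained way. Both halves of your plan hold up under scrutiny. For (1) $\Leftrightarrow$ (2), the identity ${\rm HS}_{R/I_i}(z)=(1-z^{d_i}){\rm HS}_{R/I_{i-1}}(z)+z^{d_i}C_i(z)$, with $C_i$ having non-negative coefficients, unrolls correctly to your closed form, and your minimal-degree argument for the converse is airtight: in the least degree $e<d$ where some $z^{d_i}C_i(z)$ has a nonzero coefficient, each summand $z^{d_i}C_i(z)\prod_{j>i}(1-z^{d_j})$ contributes exactly its own coefficient of $z^e$ (the product factor has constant term $1$, and nothing of lower degree survives to pair with its higher terms), so the coefficient of $z^e$ in the sum is a sum of non-negative integers with at least one positive -- no cancellation is possible. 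For (1) $\Leftrightarrow$ (3), the long exact sequence arising from $K_\bullet(\underline{f})=K_\bullet(\underline{f}')\otimes_R K_\bullet(f_m)$, whose connecting homomorphism is multiplication by $\pm f_m$ (see \cite[Corollary 1.6.13]{BH}), yields precisely your four-term exact sequence, and the Nakayama-type step is legitimate because $H_1(\underline{f}')$ is a graded module vanishing in all degrees below $\min_j d_j$: surjectivity of multiplication by $f_m$ (of degree $d_m\geq 1$) in every degree $t\leq d-1$ then forces $H_1(\underline{f}')_t=0$ for all $t\leq d-1$ by induction from the bottom degree. The induction on $m$ is also properly founded, since the hypothesis $d\geq\max_i d_i$ passes to the subsequence $(f_1,\ldots,f_{m-1})$ and the base case $m=1$ holds because $R$ is a domain.
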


\begin{definition}[{\cite[Definition 3]{BFS}}; see also {\cite[Definition 1]{Diem2}}]\label{def:semiregB}
    For each $d$ with $d \geq \mathrm{max}\{d_i : 1 \leq i \leq m \}$, a sequence $(f_1,\ldots, f_m)$ of homogeneous polynomials in $R\smallsetminus K$ is said to be $d$-regular if the equivalent conditions in Theorem \ref{lem:Diem2} hold.
\end{definition}

% \begin{proposition}[{\cite[Proposition 2 (a)]{Diem2}}]\label{prop:Diem2}
% With the same notation as in Definition \ref{def:semiregB}, let $D$ and $i$ be natural numbers.
% Assume that $H_i (K(f_1,\ldots , f_m))_{\leq D} = 0$.
% Then, for each $j$ with $1 \leq j < m$, we have 
% $H_i(K(f_1, \ldots , f_j))_{\leq D} = 0$.
% \end{proposition}

Here, we define a cryptographic semi-regular sequence:

\begin{definition}[{\cite[Definition 5]{BFS}}, {\cite[Definition 5]{BFSY}}; see also {\cite[Section 2]{Diem2}}]\label{def:csemireg}
A sequence $(f_1, \ldots , f_m)$ of homogeneous polynomials in $R\smallsetminus K$ is said to be {\it cryptographic semi-regular} if it is $d_{\rm reg}(I)$-regular.
\end{definition}

The Hilbert series of $R/I$ for a cryptographic semi-regular sequence $(f_1,\ldots,f_m)$ is easily computed, by the following Proposition \ref{prop:Diem}:

\begin{proposition}[{\cite[Proposition 1 (d)]{Diem2}}; see also {\cite[Proposition 6]{BFSY}}]\label{prop:Diem}
With the same notation as in Theorem \ref{lem:Diem2}, a sequence $(f_1, \ldots , f_m)$ of homogeneous polynomials in $R\smallsetminus K$ is \textcolor{black}{cryptographic} semi-regular if and only if
\begin{equation}\label{eq:semiregHil2}
    {\rm HS}_{R/\langle f_1,\ldots,f_m\rangle}(z) = \left[ \frac{\prod_{j=1}^{m}(1-z^{d_j})}{(1- z)^n} \right],
\end{equation}
where $[\cdot]$ means truncating a formal power series over $\mathbb{Z}$ after the last consecutive positive coefficient.
(Note that, in this case, we have $d_{\rm reg}(I)=\deg({\rm HS}_{R/I}(z))+1$.)
\end{proposition}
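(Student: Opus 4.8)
The plan is to prove both implications by pinning down, via Theorem~\ref{lem:Diem2}, the low-degree coefficients of ${\rm HS}_{R/I}(z)$ and then locating precisely where the series $P(z):=\prod_{j=1}^{m}(1-z^{d_j})/(1-z)^{n}$ first acquires a non-positive coefficient. Write $P(z)=\sum_{k\ge 0}c_k z^k$, and recall that $[P(z)]$ keeps exactly the initial run $c_0,\ldots,c_t$ of positive coefficients, where $t$ is maximal with $c_0,\ldots,c_t>0$ (so $c_{t+1}\le 0$). Throughout I set $A_i=R/\langle f_1,\ldots,f_i\rangle$ (so $A_0=R$, $A_m=R/I$) and use the right-exact sequence $(A_{i-1})_{k-d_i}\xrightarrow{\ \cdot f_i\ }(A_{i-1})_k\to (A_i)_k\to 0$, which always yields $\dim_K(A_i)_k\ge \dim_K(A_{i-1})_k-\dim_K(A_{i-1})_{k-d_i}$, with equality exactly when the multiplication map is injective. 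I also write $c^{(i)}_k$ for the $z^k$-coefficient of $P_i(z):=\prod_{j=1}^{i}(1-z^{d_j})/(1-z)^{n}$, so that $c^{(i)}_k=c^{(i-1)}_k-c^{(i-1)}_{k-d_i}$ and $c^{(m)}_k=c_k$.

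For the forward direction, assume $(f_1,\ldots,f_m)$ is cryptographic semi-regular, i.e.\ $D$-regular with $D:=d_{\rm reg}(I)$; this presupposes $R/I$ Artinian and, for the notion to apply, $D\ge\max_j d_j$. By Theorem~\ref{lem:Diem2}(2) we have ${\rm HS}_{R/I}(z)\equiv P(z)\pmod{z^{D}}$, so $\dim_K(R/I)_k=c_k$ for $0\le k\le D-1$; since $R/I$ is a standard graded Artinian algebra its nonzero homogeneous components form the initial segment $\{0,\ldots,D-1\}$, whence $c_k>0$ there. It remains to show $c_D\le 0$, so that the truncation stops after degree $D-1$ and $[P(z)]=\sum_{k=0}^{D-1}c_kz^k={\rm HS}_{R/I}(z)$. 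By Theorem~\ref{lem:Diem2}(1) each multiplication map is injective in degrees $<D$, giving $\dim_K(A_i)_k=c^{(i)}_k$ for all $i$ and all $k\le D-1$. Applying the right-exact inequality in degree $D$ at each step $i$ (where the lower term $\dim_K(A_{i-1})_{D-d_i}=c^{(i-1)}_{D-d_i}$ is already known, as $D-d_i<D$) and chaining the bounds down to $A_0=R$ yields
\begin{equation*}
0=\dim_K(R/I)_D=\dim_K(A_m)_D\ \ge\ \dim_K(A_0)_D-\sum_{i=1}^{m}c^{(i-1)}_{D-d_i}=c^{(m)}_D=c_D,
\end{equation*}
the last equality telescoping the recursion above. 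Hence $c_D\le 0$, and in particular $d_{\rm reg}(I)=D=\deg({\rm HS}_{R/I}(z))+1$.

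For the converse, assume ${\rm HS}_{R/I}(z)=[P(z)]=\sum_{k=0}^{t}c_kz^k$. Then $\dim_K(R/I)_k=c_k>0$ for $k\le t$ and $\dim_K(R/I)_k=0$ for $k>t$, so $R/I$ is Artinian with $d_{\rm reg}(I)=t+1=:D$. The series ${\rm HS}_{R/I}(z)=\sum_{k=0}^{D-1}c_kz^k$ and $P(z)=\sum_{k\ge 0}c_kz^k$ agree in all degrees $<D$, i.e.\ ${\rm HS}_{R/I}(z)\equiv P(z)\pmod{z^{D}}$, so Theorem~\ref{lem:Diem2}(2) shows the sequence is $D$-regular, that is, cryptographic semi-regular by Definition~\ref{def:csemireg}.

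I expect the main obstacle to be the inequality $c_D\le 0$ in the forward direction: $D$-regularity controls only degrees $<D$, so $c_D$ is not delivered by the congruence and must be extracted from the genuinely non-exact right-exact sequence in degree $D$ together with the Artinian vanishing $\dim_K(R/I)_D=0$; the telescoping bookkeeping above is precisely what converts these into $c_D\le 0$. A secondary technical point, needed only to keep ``$D$-regular'' well-defined in the converse, is $D=d_{\rm reg}(I)\ge\max_j d_j$. This holds as soon as no top-degree generator lies in the ideal generated by the remaining ones: if $t+1<\max_j d_j$ then $(R/I)_{(\max_j d_j)-1}=0$ forces $R_{(\max_j d_j)-1}=\langle f_j:\deg f_j<\max_j d_j\rangle_{(\max_j d_j)-1}$, and multiplying by $R_1$ exhibits each top-degree generator inside that subideal, the harmless non-redundancy excluded by the standing hypotheses on a semi-regular sequence.
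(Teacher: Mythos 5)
The paper never proves Proposition \ref{prop:Diem} at all: it is imported from \cite[Proposition 1 (d)]{Diem2} (see also \cite[Proposition 6]{BFSY}), so there is no internal argument to compare yours against, and your proof supplies exactly the missing content. The proof is correct. You isolate the right pressure point: the converse is an immediate application of Theorem \ref{lem:Diem2} (2) once $d_{\rm reg}(I)=t+1$ is read off from the truncated series, and the only nontrivial step is the forward-direction inequality $c_D\le 0$, which $D$-regularity alone does not give since the congruence only controls degrees $<D$. Your derivation of it is sound: the injectivity in degrees $<D$ gives $\dim_K(A_i)_k=c^{(i)}_k$ for all $k\le D-1$ by induction on $i$; the right-exactness of $(A_{i-1})_{D-d_i}\xrightarrow{\cdot f_i}(A_{i-1})_D\to(A_i)_D\to 0$ then gives $\dim_K(A_i)_D\ge \dim_K(A_{i-1})_D-c^{(i-1)}_{D-d_i}$ (legitimate because $D-d_i<D$), and chaining these and telescoping the recursion $c^{(i)}_D=c^{(i-1)}_D-c^{(i-1)}_{D-d_i}$ turns the Artinian vanishing $\dim_K(R/I)_D=0$ into $c_D\le 0$. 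This is in the same spirit as the Koszul/multiplication-map arguments in \cite{Diem2}, so you have effectively reconstructed the cited proof and made the paper self-contained. One caveat: your final remark, that the degenerate case $d_{\rm reg}(I)<\max_j d_j$ is ``excluded by the standing hypotheses,'' overstates what the paper assumes. Nothing in Definition \ref{def:csemireg} forbids redundant top-degree generators; for instance $f_1=x$, $f_2=x^2$ in $K[x]$ satisfies \eqref{eq:semiregHil2}, yet $d_{\rm reg}(I)=1<2=\max_j d_j$, so ``$d_{\rm reg}(I)$-regular'' falls outside the range $d\ge\max_i d_i$ to which Theorem \ref{lem:Diem2} and Definition \ref{def:semiregB} apply. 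This is a mismatch of conventions in the quoted statement itself rather than a flaw in your mathematics (Diem's original formulation does not restrict $d$, and your converse then goes through verbatim), but it should be flagged as an assumption on the sequence, not attributed to hypotheses the paper does not actually make.
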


The above `standard' form \eqref{eq:semiregHil2} of Hilbert series was introduced by Fr\"{o}berg in his celebrated paper~\cite{Froberg}.
We will recall his famous conjecture (Conjecture \ref{conj:Froberg} below) in the next subsection.

\begin{remark}
    {In \cite[Definition 1.3]{FH94}, Fr\"{o}berg-Hollman already defined a \textit{sufficiently generic ideal} as one generated by homogeneous polynomials $f_1,\ldots,f_m\in R$ satisfying \eqref{eq:semiregHil2}.
    Namely, a cryptographic semi-regular sequence is nothing but a sequence of homogeneous polynomials generating a sufficiently generic ideal.}
\end{remark}

Here, we recall the following lemma {by Fr\"{o}berg}:

\begin{lemma}[{\cite[b) on page 121]{Froberg}}]\label{lem:Hil_lex}
With notation as above, we have
    \[
    {\rm HS}_{R/I}(z) \geq^{\rm lex} \left[ \frac{\prod_{j=1}^{m}(1-z^{d_j})}{(1- z)^n} \right]
    \]
in the lexicographic sense, i.e., the first coefficient that differs, if any, is greater on the left hand side, see \cite[page 118]{Froberg}.
\end{lemma}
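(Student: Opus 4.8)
The plan is to prove the inequality by induction on the number $m$ of generators, peeling off one generator at a time and controlling the effect on the Hilbert series through a colon exact sequence. Write $I_k = \langle f_1,\ldots,f_k\rangle$, $H^{(k)}(z) = {\rm HS}_{R/I_k}(z)$, and let $P^{(k)}(z) = \prod_{j=1}^k (1-z^{d_j})/(1-z)^n$ be the ``expected'' series, so the target is $H^{(m)} \geq^{\rm lex} [P^{(m)}]$. The starting point is the exact sequence
\[
0 \longrightarrow \bigl(R/(I_{k-1}:f_k)\bigr)(-d_k) \xrightarrow{\ \cdot f_k\ } R/I_{k-1} \longrightarrow R/I_k \longrightarrow 0,
\]
which gives ${\rm HS}_{R/I_k} = {\rm HS}_{R/I_{k-1}} - z^{d_k}\,{\rm HS}_{R/(I_{k-1}:f_k)}$. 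Since $I_{k-1} \subseteq (I_{k-1}:f_k)$, we have ${\rm HS}_{R/(I_{k-1}:f_k)} \leq^{\rm coeff} {\rm HS}_{R/I_{k-1}}$ coefficient-by-coefficient, whence the key coefficientwise bound
\[
H^{(k)}(z) \ \geq^{\rm coeff}\ (1-z^{d_k})\,H^{(k-1)}(z). \qquad (\ast)
\]

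Two elementary facts about power series organize the induction. Fact (a): multiplication by $1-z^e$ preserves the lexicographic order, i.e.\ $A \geq^{\rm lex} B$ implies $(1-z^e)A \geq^{\rm lex} (1-z^e)B$; this is because the lex comparison is decided by the lowest-degree coefficient of $A-B$, and multiplying $A-B$ by the series $1-z^e$ (whose lowest term is $+1$) leaves that lowest-degree coefficient unchanged. Fact (b): for any integer power series one has the bracket identity $\bigl[(1-z^e)[P]\bigr] = \bigl[(1-z^e)P\bigr]$. Iterating (b) shows that $Q^{(m)} = [P^{(m)}]$, where $Q^{(k)}$ is defined by $Q^{(0)} = 1/(1-z)^n$ and $Q^{(k)} = \bigl[(1-z^{d_k})Q^{(k-1)}\bigr]$. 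I would prove (b) by locating the first index $t$ at which $P$ becomes non-positive and checking that $(1-z^e)P$ and $(1-z^e)[P]$ agree below $t$ and first become non-positive at the same index, so that their brackets coincide.

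With these in hand, the inductive step reduces to a single comparison lemma: if $H^{(k-1)} \geq^{\rm lex} Q^{(k-1)}$ (both nonnegative) and $H^{(k)} \geq^{\rm coeff} (1-z^{d_k})H^{(k-1)}$ with $H^{(k)}$ nonnegative, then $H^{(k)} \geq^{\rm lex} Q^{(k)}$. To prove it I set $E = (1-z^{d_k})Q^{(k-1)}$ and $D = (1-z^{d_k})H^{(k-1)}$, let $\tau$ be the first index with $E_\tau \leq 0$ (so $Q^{(k)} = [E]$ equals $E$ below $\tau$ and $0$ from $\tau$ on), and let $j$ be the first index where $H^{(k-1)}$ and $Q^{(k-1)}$ differ. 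By Fact (a), $D$ and $E$ agree below $j$ and $D_j > E_j$. The argument splits on whether $j < \tau$ or $j \geq \tau$: in the first case $(\ast)$ gives $H^{(k)}_i \geq D_i = E_i = Q^{(k)}_i$ for $i<j$ and $H^{(k)}_j \geq D_j > E_j = Q^{(k)}_j$, so $H^{(k)}$ wins the lex comparison by index $j$; in the second case $H^{(k)} \geq^{\rm coeff} Q^{(k)}$ outright, since $H^{(k)}_i \geq E_i = Q^{(k)}_i$ for $i<\tau$ and $Q^{(k)}_i = 0 \leq H^{(k)}_i$ for $i \geq \tau$ (the case $j=\tau=\infty$ being the coefficientwise one).

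I expect the delicate part to be precisely this case analysis around the truncation index $\tau$, where the expected coefficients change sign: the bound $(\ast)$ is only informative where $(1-z^{d_k})H^{(k-1)}$ stays nonnegative, and beyond $\tau$ one must instead invoke nonnegativity of the Hilbert function, so the whole point is to verify that the first disagreement between $H^{(k)}$ and $Q^{(k)}$ never falls on the wrong side. Granting the single-step lemma, the induction closes at once: the base case $H^{(0)} = 1/(1-z)^n = Q^{(0)}$ is an equality, and after $m$ steps we obtain $H^{(m)} = {\rm HS}_{R/I} \geq^{\rm lex} Q^{(m)} = [P^{(m)}] = \bigl[\prod_{j=1}^m(1-z^{d_j})/(1-z)^n\bigr]$, which is the claim; this reproduces Fröberg's argument in \cite{Froberg}.
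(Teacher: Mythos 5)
Your proof is correct. The paper itself gives no proof of this lemma --- it is quoted verbatim from Fr\"{o}berg's paper (item b) on page 121) --- and your argument is precisely a reconstruction of Fr\"{o}berg's original induction: the colon-ideal exact sequence $0 \to \bigl(R/(I_{k-1}\!:\!f_k)\bigr)(-d_k) \to R/I_{k-1} \to R/I_k \to 0$ yielding the coefficientwise bound ${\rm HS}_{R/I_k} \geq (1-z^{d_k})\,{\rm HS}_{R/I_{k-1}}$, followed by the bracket/lexicographic bookkeeping (your Facts (a) and (b) and the case split around the truncation index), so it matches the approach of the cited source and there is nothing in the paper to compare it against beyond that.
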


\begin{remark}
For any non-negative integer $d$, it follows from Lemma \ref{lem:Hil_lex} that
\[
\left( {\rm HS}_{R/I}(z) \bmod{z^d} \right)\geq^{\rm lex} \left( \frac{\prod_{j=1}^{m}(1-z^{d_j})}{(1- z)^n} \bmod{z^d} \right).
\]
\end{remark}

To deal with the case where $R/I$ has Krull dimension one, we extend the notion of \textcolor{black}{cryptographic} semi-regular as follows:

\begin{definition}[{\cite[Definition 2.3.3]{KY24b}}]\label{def:gensemireg}
    A sequence $(f_1, \ldots , f_m)$ of homogeneous polynomials in $R\smallsetminus K$ is said to be {\it generalized cryptographic semi-regular} if it is $\widetilde{d}_{\rm reg}(I)$-regular.
\end{definition}

\begin{remark}[cf.\ {\cite[Section 4.1]{KY24b}}]\label{rem:construction}
    A generalized cryptographic semi-regular (but neither regular nor semi-regular) sequence is easily generated.
    For example, choosing coefficients of $g_1,\ldots,g_m$ with $m\geq n$ uniformly at random from a finite field $K=\mathbb{F}_q$ and choosing an arbitrary point $\bm{p}=(p_1,\ldots,p_{n-1},p_n) \in \mathbb{A}^n(K)$ with $p_n=1$, then it is expected that $(f_1,\ldots,f_m)$ with $f_j:= g_j(x_1,\ldots,x_n) - g_j(p_1,\ldots,p_n)x_n^{d_j}$ is generalized cryptographic semi-regular (but neither regular nor semi-regular).
    Without loss of generality, we may take $\bm{o}=(0:\cdots :0 :1)$ as $\bm{p}$, see Section \ref{sec:generic} below for details.
\end{remark}

\begin{example}\label{ex:gcsr}
Let $K=\mathbb{F}_{31}$, $n=3$, and $R=K[x_1,x_2,x_3]$.
Here we consider the case where $(d_1,d_2,d_3)=(3,2,2)$ with $m=3$, and let 
    \begin{eqnarray*}
        f_1 &=& 27 x_1^3 + 5 x_1^2 x_2 + 6 x_1 x_2^2 + 20 x_2^3 + 29 x_1 x_2 x_3 + 25 x_2^2 x_3 + 3 x_1 x_3^2 + 26 x_2 x_3^2,\\
        f_2 &=& 2 x_1^3 + 29x_1^2 x_2 + 25 x_1 x_2^2 + 3 x_2^3 + 25 x_1^2 x_3 + 20 x_1 x_2 x_3 + 25 x_2^2 x_3 + 19x_1x_3^2 + 24x_3 x_3^2,\\
        f_3 &=& 12 x_1^2 + 12x_1x_2 + 23x_2^2 + x_1 x_3 + 10x_2 x_3.
    \end{eqnarray*}
Putting $I=\langle f_1,f_2,f_3\rangle$, one can verify that
    \[
    {\rm HS}_{R/I}(z) = 1 + 3 z + 5 z^2 + 5 z^3 + 3 z^4  + z^5 + z^6 + \cdots
    \]
with $\mathrm{Krull.dim}(R/I)=1$, $D:=\mathrm{hilb}(I)=5$, and $\dim_K (R/I)_D =1$, so that $(f_1,f_2,f_3)$ is $D$-regular (in fact $D+1$-regular).
Namely $(f_1,f_2,f_3)$ is generalized cryptographic semi-regular, but is neither regular nor cryptographic semi-regular.
    % いま，
    % \[
    % \frac{(1-z^3)^2(1-z^2)}{(1-z)^3} = (1+z+z^2)^2(1+z) = 1 + 3 z + 5 z^2 + 5 z^3 + 3 z^4 + z^5
    % \]
    % なので，$(f_1,f_2,f_3)$は$D+1$-正則すなわち$6$-正則であり，補題\ref{lem:gcsr_csr}の仮定を満たす．
    % 一方，任意の$d \geq D$に対し$\{ x_3^{d} \}$は$(R/I)_d$の$K$-線形空間としての基底であり，$R/(I+\langle x_3^{6}\rangle)$のHilbert級数は
    % \[
    % {\rm HS}_{R/(I+\langle x_3^{6}\rangle)}(z) = 1 + 3 z + 5 z^2 + 5 z^3 + 3 z^4 + z^5
    % \]
    % である．
    % いま，
    % \[
    %  \frac{(1-z^3)^2(1-z^2)}{(1-z)^3} \cdot (1-z^6) = 1 + 3 z + 5 z^2 + 5 z^3 + 3 z^4 + z^5 - z^6 -3 z^7 - \cdots
    % \]
    % となるから，
    % \[
    % {\rm HS}_{R/(I+\langle x_3^{6}\rangle)}(z) = \left[ \frac{(1-z^3)^2(1-z^2)}{(1-z)^3} \cdot (1-z^6) \right]
    % \]
    % を満たし，$(f_1,f_2,f_3,x_3^6)$は暗号学的半正則列である．
    % しかしながら部分列$(f_1,f_2,f_3)$は暗号学的半正則列でないので，$(f_1,f_2,f_3,x_3^6)$は半正則列ではない．
\end{example}

\begin{lemma}\label{lem:dreg_bound}
If the sequence $\bm{F} = (f_1,\ldots, f_m)$ is generalized cryptographic semi-regular, then we have
\begin{equation}\label{eq:easy}
    \widetilde{d}_{\rm reg}(I)\leq {\deg\left(\left[\frac{\prod_{j=1}^m(1-z^{d_j})}{(1-z)^{n}}\right]\right)}+1.
\end{equation}
% \begin{enumerate}
%     \item 
%     If ${F}^h$ has at least one projective zero and if $\bm{F}^h$ is $D'$-regular (and thus $D' < \infty$), then $D'$ is upper-bounded by $D_{\rm new}$ given in \eqref{eq:newbound}.
%     \item If $\bm{F}^{\rm top}$ is cryptographic semi-regular, then $D := d_{\rm reg}(\langle F^{\rm top} \rangle_R)$ is also upper-bounded by $D_{\rm new}$ given in \eqref{eq:newbound} (this also holds even if $n = m$).
% \end{enumerate}
\end{lemma}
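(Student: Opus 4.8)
The plan is to compare the Hilbert series of $R/I$ coefficientwise with the Fröberg series $P(z) := \prod_{j=1}^m(1-z^{d_j})/(1-z)^n = \sum_{i \geq 0} c_i z^i$ and to exploit positivity of the Hilbert function. Write $D := \widetilde{d}_{\rm reg}(I)$. Since $\bm{F}$ is generalized cryptographic semi-regular, it is $D$-regular by Definition \ref{def:gensemireg}, so the equivalent condition (2) of Theorem \ref{lem:Diem2} gives $\mathrm{HS}_{R/I}(z) \equiv P(z) \pmod{z^D}$; equivalently $c_i = \mathrm{HF}_{R/I}(i)$ for every $i$ with $0 \le i \le D-1$.

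Next I would unpack what the truncation operator $[\,\cdot\,]$ records. Since $c_0 = 1 > 0$, the series $[P(z)]$ is exactly the initial maximal run of strictly positive coefficients of $P(z)$, so $\deg([P(z)])$ equals the largest index $k$ (possibly $+\infty$) for which $c_0, c_1, \ldots, c_k$ are all strictly positive. Hence it suffices to prove that $c_i > 0$ for all $i \le D-1$: this forces $\deg([P(z)]) \ge D-1$, and therefore $D \le \deg([P(z)])+1$, which is precisely \eqref{eq:easy}.

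The key step is thus the positivity $\mathrm{HF}_{R/I}(i) > 0$ for $0 \le i \le D-1$, and here I would split into the two cases permitted by the standing hypothesis $r = \mathrm{Krull.dim}(R/I) \le 1$, using the description of $\widetilde{d}_{\rm reg}$ in Remark \ref{rem:relation}. If $r = 0$, then $D = d_{\rm reg}(I) = \min\{d : \mathrm{HF}_{R/I}(d) = 0\}$; since $(R/I)_d = 0$ implies $(R/I)_{d+1} = R_1 \cdot (R/I)_d = 0$, the Hilbert function is strictly positive exactly on $\{0,1,\ldots,D-1\}$, as required. If $r = 1$, I would argue instead that $\mathrm{HF}_{R/I}$ never vanishes: a single value $\mathrm{HF}_{R/I}(d) = 0$ would, by the same multiplicative argument, force $\mathrm{HF}_{R/I}(d') = 0$ for all $d' \ge d$, making $R/I$ Artinian and contradicting $r = 1$; hence $\mathrm{HF}_{R/I}(i) > 0$ for every $i$, in particular for $i \le D-1$.

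Combining these gives $c_i = \mathrm{HF}_{R/I}(i) > 0$ for $0 \le i \le D-1$, and the conclusion follows as above. I expect the only delicate point to be the bookkeeping around the truncation when $P(z)$ has an entirely positive coefficient sequence (which happens in the boundary case $m = n-1$): there $\deg([P(z)]) = +\infty$ and \eqref{eq:easy} holds trivially, so the argument should be phrased to need only the one-sided bound $\deg([P(z)]) \ge D-1$ rather than an exact evaluation of the truncation degree.
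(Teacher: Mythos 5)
Your proposal is correct and follows essentially the same route as the paper's proof: both use the congruence $\mathrm{HS}_{R/I}(z) \equiv \prod_{j=1}^m(1-z^{d_j})/(1-z)^n \pmod{z^{D}}$ with $D=\widetilde{d}_{\rm reg}(I)$ supplied by $D$-regularity, deduce positivity of the coefficients of the Fr\"oberg series in degrees $< D$ from positivity of the Hilbert function there, and conclude from the definition of the truncation $[\,\cdot\,]$. The only difference is one of detail: the paper asserts $\mathrm{HF}_{R/I}(d')>0$ for $d'<D$ without justification, whereas you supply the (correct) supporting argument via the cases $r=0$ and $r=1$ and the remark that a vanishing graded piece forces all higher ones to vanish.
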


\begin{proof}
By our assumption that $\bm{F}$ is generalized cryptographic semi-regular, the equality \eqref{eq:dregHil} holds for $d = \widetilde{d}_{\rm reg}(I)$.
For any $d'$ with $d' < \widetilde{d}_{\rm reg}(I)$, we have ${\rm HF}_{R/I}(d') > 0$, whence the $z^{d'}$-coefficient of {$(\prod_{j=1}^m(1-z^{d_j}))/(1-z)^{n}$} is also positive.
Thus, the inequality \eqref{eq:easy} holds.
    % If $m \leq n-2$, then both sides of \eqref{eq:easy} are $\infty$, whence they are equal to each other.
    % We suppose $m \geq n-1$.
\end{proof}
% \begin{remark}
% In the case where $m\leq n$, the sequence $\bm{F} = (f_1,\ldots,f_m)$ is cryptographic semi-regular (i.e., $d_{\rm reg}(I)$-regular) if and only if it is regular, namely
% \[
% {\rm HS}_{R/I}(z) = \frac{\prod_{j=1}^{m}(1-z^{d_j})}{(1- z)^n} .
% \]
% Also in the case where $m \leq n-2$, we have that $\bm{F}$ is generalized cryptographic semi-regular (i.e., $\widetilde{d}_{\rm reg}(I)$-regular) if and only if it is regular.

% Note that, however, the ${\rm hilb}(I)$-regularity of $\bm{F}$ does not imply the regularity of $\bm{F}$ in general.
% \end{remark}

For a sequence $\bm{F}=(f_1,\ldots,f_m)$ of homogeneous polynomials in $R \smallsetminus K$, we consider the following four conditions:
\begin{description}
    \item[{\bf (A)}] $\bm{F}$ is regular, equivalently {${\rm HS}_{R/I}(z) = (\prod_{j=1}^{m}(1-z^{d_j}))/(1- z)^n$}.
    \item[{\bf (B)}] $\bm{F}$ is cryptographic semi-regular (i.e., $d_{\rm reg}(I)$-regular).
    \item[{\bf (C)}] $\bm{F}$ is generalized cryptographic semi-regular (i.e., $\widetilde{d}_{\rm reg}(I)$-regular).
    \item[{\bf (D)}] $\bm{F}$ is $\mathrm{hilb}(I)$-regular.
\end{description}

\begin{lemma}\label{lem:four}
    For conditions (A), (B), (C), and (D) as defined above, there is an implication:
    \[
    (A) \Rightarrow (B) \Rightarrow (C) \Rightarrow (D), \mbox{ but } (D) \overset{r\geq 2}{\not\Rightarrow} (C) \overset{r=1}{\not\Rightarrow} (B) \overset{m>n}{\not\Rightarrow} (A).
    \]
    Namely, (A) and (B) are equivalent for $m \leq n$, (B) and (C) are equivalent for $r \neq 1$, (C) and (D) are equivalent for $r \leq 1$, and (A), (B), (C), and (D) are all equivalent for $m = n$ and $r =0$.
\end{lemma}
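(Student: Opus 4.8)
The plan is to reduce everything to two structural facts — the monotonicity of $d$-regularity and the ordering of the three scalar parameters $d_{\rm reg}(I)$, $\widetilde d_{\rm reg}(I)$, $\mathrm{hilb}(I)$ — and then to handle the non-implications by explicit monomial examples. First I would record that being $d$-regular is \emph{monotone decreasing in $d$}: by the equivalent condition \eqref{eq:dregHil} of Theorem \ref{lem:Diem2}, a sequence is $d$-regular precisely when $\mathrm{HS}_{R/I}(z)\equiv \prod_{j}(1-z^{d_j})/(1-z)^n \pmod{z^d}$, and a congruence modulo $z^d$ forces the congruence modulo $z^{d'}$ for every $d'\le d$. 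Hence regularity for a larger truncation degree is the stronger condition, and in the limit ``$\infty$-regular'' is exactly condition (A), i.e.\ $\mathrm{HS}_{R/I}=\prod_j(1-z^{d_j})/(1-z)^n$ to all orders.

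Next I would pin down the sizes of the three parameters. Using the equivalences in Remark \ref{rem:relation}, the Hilbert function of $R/I$ is eventually $0$ iff $r=0$, eventually constant iff $r\le 1$, and eventually polynomial always; consequently
\[
d_{\rm reg}(I)<\infty \iff r=0,\qquad \widetilde d_{\rm reg}(I)<\infty \iff r\le 1,\qquad \mathrm{hilb}(I)<\infty\ \text{always},
\]
and by \eqref{eq:Hilb} one obtains the uniform chain (with the convention $x\le\infty$)
\[
\mathrm{hilb}(I)\ \le\ \widetilde d_{\rm reg}(I)\ \le\ d_{\rm reg}(I),
\]
since the parameters agree when finite and the larger ones jump to $\infty$ exactly when the corresponding Krull-dimension condition fails. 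Combining this ordering with the monotonicity of the first paragraph yields the chain at once: (A) is $\infty$-regularity, hence implies $d_{\rm reg}(I)$-regularity $=$ (B), which implies $\widetilde d_{\rm reg}(I)$-regularity $=$ (C), which implies $\mathrm{hilb}(I)$-regularity $=$ (D).

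For the equivalences I would argue range by range, in each case upgrading the weaker condition through an equality among the three parameters. When $r=0$ all three coincide, so (B), (C), (D) are literally the same condition; when $r\le 1$ we have $\widetilde d_{\rm reg}(I)=\mathrm{hilb}(I)$ by \eqref{eq:Hilb}, giving (C)$\Leftrightarrow$(D); when $r\ge 2$ both $d_{\rm reg}(I)$ and $\widetilde d_{\rm reg}(I)$ equal $\infty$, so (B) and (C) both collapse to (A), giving (B)$\Leftrightarrow$(C). The only non-formal equivalence is (A)$\Leftrightarrow$(B) for $m\le n$: for $m<n$ one has $r\ge n-m\ge 1$, so $d_{\rm reg}(I)=\infty$ and (B) is literally (A); for $m=n$ the function $\prod_{j}(1-z^{d_j})/(1-z)^n=\prod_j(1+z+\cdots+z^{d_j-1})$ is a polynomial with \emph{positive} coefficients, so its Fr\"oberg truncation equals itself, and Proposition \ref{prop:Diem} then forces any cryptographic semi-regular sequence to satisfy $\mathrm{HS}_{R/I}=\prod_j(1-z^{d_j})/(1-z)^n$, i.e.\ (A). Specializing $m=n$, $r=0$ combines the blocks into (A)$\Leftrightarrow$(B)$\Leftrightarrow$(C)$\Leftrightarrow$(D).

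Finally, the three non-implications are witnessed by monomial sequences, verified by a direct Hilbert-function computation: $(f_1,f_2)=(x,x)$ in $K[x]$ has $m>n$ and $r=0$, is cryptographic semi-regular (one checks $\mathrm{HS}_{R/I}=1=[\,1-z\,]$) but not regular, giving (B)$\not\Rightarrow$(A); $(f_1,f_2)=(x^2,xy)$ in $K[x,y]$ has $r=1$ with $\widetilde d_{\rm reg}=\mathrm{hilb}=2<\infty=d_{\rm reg}$, is $\widetilde d_{\rm reg}$-regular but not regular $=$ (B), giving (C)$\not\Rightarrow$(B); and $(f_1,f_2,f_3)=(x^2,xy,xz)$ in $K[x,y,z]$ has $r=2$ and $\mathrm{hilb}=2$, is $\mathrm{hilb}$-regular yet not regular $=$ (C), giving (D)$\not\Rightarrow$(C). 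The hard part will not be any single step but the bookkeeping of \emph{which} parameter is finite in each Krull-dimension range, since the whole statement is precisely an accounting of how (B) and (C) silently degenerate to (A) as $r$ grows; the only genuinely arithmetic inputs are the positivity-of-coefficients argument for (A)$\Leftrightarrow$(B) at $m=n$ and the choice of the $r=2$ example whose Hilbert regularity is large enough ($\mathrm{hilb}\ge\max_j d_j$) for $\mathrm{hilb}$-regularity to be a meaningful condition.
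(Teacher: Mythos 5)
Your proposal is correct, and for the positive implications and equivalences it follows essentially the paper's route: the chain $(A)\Rightarrow(B)\Rightarrow(C)\Rightarrow(D)$ from monotonicity of $d$-regularity in $d$, the equivalence $(C)\Leftrightarrow(D)$ for $r\le 1$ from \eqref{eq:Hilb}, the equivalence $(B)\Leftrightarrow(C)$ for $r\neq 1$ from $d_{\rm reg}(I)=\widetilde d_{\rm reg}(I)$ (both finite and equal when $r=0$, both infinite when $r\ge 2$), and $(B)\Rightarrow(A)$ for $m\le n$ from Proposition \ref{prop:Diem} together with positivity of the coefficients of $\prod_j(1-z^{d_j})/(1-z)^n$ — the paper's proof states this last step as ``$(B)\Rightarrow(C)$'', which is evidently a typo for ``$(B)\Rightarrow(A)$'', and you supply the intended argument (your alternative for $m<n$, via Krull's height theorem forcing $r\ge n-m\ge 1$ and hence $d_{\rm reg}(I)=\infty$, is an equally valid variant). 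The genuine difference is that you also prove the negative half of the lemma: the paper's proof is silent on $(D)\overset{r\ge 2}{\not\Rightarrow}(C)$, $(C)\overset{r=1}{\not\Rightarrow}(B)$, $(B)\overset{m>n}{\not\Rightarrow}(A)$, whereas you exhibit explicit monomial witnesses — $(x,x)$ in $K[x]$, $(x^2,xy)$ in $K[x,y]$, and $(x^2,xy,xz)$ in $K[x,y,z]$ — and each checks out by direct Hilbert-function computation (e.g.\ for $(x^2,xy)$ one has ${\rm HF}_{R/I}=1,2,1,1,\ldots$, so $r=1$, $\widetilde d_{\rm reg}(I)=2$, the congruence with $(1+z)^2$ holds modulo $z^2$, yet $x\cdot xy\in\langle x^2\rangle$ with $x\notin\langle x^2\rangle$ kills regularity). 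Your version is therefore strictly more complete than the paper's; the only caution, which you yourself flag, is that $d$-regularity is defined only for $d\ge\max_j d_j$, so the counterexamples must be (and are) chosen so that the relevant parameter clears that threshold.
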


\begin{proof}
    It is clear that $(A) \Rightarrow (B) \Rightarrow (C) \Rightarrow (D)$.
    When $r \leq 1$, it follows from \eqref{eq:Hilb} that $(D) \Rightarrow (C)$ holds.
    Also when $r \neq 1$, we have $d_{\rm reg}(I) = \widetilde{d}_{\rm reg}(I)$, whence $(C) \Rightarrow (B)$ holds.
    For $m \leq n$, Proposition \ref{prop:Diem} implies $(B) \Rightarrow (C)$.

    {On the other hand, it is clear that (B) does not imply (A) if $m > n$.
    The sequence $(f_1,f_2,f_3)$ provided in Example \ref{ex:gcsr} gives a counterexample for the implication (C) $\Rightarrow$ (B) when $r=1$.
    As for (D) $\Rightarrow$ (C), here is a counterexample with $r=2$:
    Let $(g_1,g_2):=(x^3,xy) \in \mathbb{Q}[x,y,z]^2$ and put $I:=\langle g_1,g_2\rangle$.
    Then one can verify that $\mathrm{HS}_{R/I}(z) = 1+3z+5z^2 + 6z^3 + 7z^4 + \cdots$ \textcolor{black}{with} $\mathrm{Krull.dim}(R/I)=2$ and $D:=\mathrm{hilb}(I)=2$, whence $(g_1,g_2)$ is $D$-regular but not regular.}
\end{proof}

% \begin{example}
    
% \end{example}

% \textcolor{blue}{反例}

% In 1985, Fr\"{o}berg had already conjectured in \cite{Froberg} that, when $K$ is an infinite field, a generic sequence of homogeneous polynomials $f_1,\ldots,f_m \in R$ of degrees $d_1,\ldots , d_m$ generates an ideal $I$ with the Hilbert–Poincar\'{e} series of the form \eqref{eq:semiregHil2}, namely $(f_1,\ldots , f_m)$ is cryptographic semi-regular.

% It follows from the fourth condition of Proposition \ref{prop:semireg} together with the second condition of Proposition \ref{prop:Diem} that the semi-regularity implies the cryptographic semi-regularity.

%===============================================================
\subsection{Genericness {and Fr\"{o}berg's conjecture}}\label{subsec:generic}
%===============================================================

Given $m$ {positive} integers $d_1,\ldots,d_m$, 
%let $V = R_{d_1} \times \cdots \times R_{d_m}$, 
let ${V_{n,m,(d_1,\ldots,d_m)}} := R_{d_1} \times \cdots \times R_{d_m}$, which is topologically identified with 
%which is bijective to 
the affine space {$\mathbb{A}^{N_1}(K)\times \dots \times \mathbb{A}^{N_m}(K) = \mathbb{A}^N(K)$ of 
dimension $N:=\sum_{j=1}^mN_j$} with $N_j:= \binom{n+d_j-1}{d_j}$.
{Here, we identify a form in $R_{d_j}$ with a point in $\mathbb{A}^{N_j}(K)$ by taking the coefficients.}
{We may write $K^N$ simply for $\mathbb{A}^N(K)$.}
% 最初の定義では、V ではなく, V(n,m:d_1,\ldots,d_m) と書いた方がわかりやすいと思います。「簡単のために、V で表す」をその後に書いてもよいです。
% 以下を読者の読みやすさのために加筆しました。
% Here the identification from $R_{d}$ to $\mathbb{A}^{N}{\color{red}(K)}$, where $N=\binom{n+d-1}{d}$, is given as follows:
% Letting $\{t_1,\ldots,t_N\}$ be the set of all monomials in $R$ of degree $d$, 
% each polynomial $f=\sum_{i=1}^N c_it_i$ in $R_d$ is identified with 
% its {\em coefficients vector} $(c_1,\ldots,c_N)$. 
%
%\textcolor{blue}{We write the identify map from $V$ to $\mathbb{A}^N(K)$ by $\mathcal{P}_V$.}
%\textcolor{red}{As in \cite{FH94}, we} write simply $V$ for \textcolor{red}{$V_{n,m,(d_1,\ldots,d_m)}$} when we fix %the parameter $(n,m,d_1,\ldots,d_m)$. 
{We also correspond a point $\bm{F}=(f_1,\ldots,f_m) \in V_{n,m,(d_1,\ldots,d_m)}$ to an ideal $\langle \bm{F} \rangle_R :=\langle f_1,\ldots,f_m\rangle_R$, where the subscript $R$ may be omitted if the meaning is clear from the context.}

{
Following Fr\"{o}berg's definition (cf.\ \cite{Froberg}, \cite{Froberg22}, \cite{FS18}), we say that a form of $R_d$ is {\it generic} if all monomials of degree $d$ occur with nonzero coefficient and if all the coefficients are algebraically independent over the prime field of $K$.
We call $\bm{F}=(f_1,\ldots,f_m) \in V_{n,m,(d_1,\ldots,d_m)}$ a \textit{generic} sequence if all $f_i$'s are generic forms with coefficients that are mutually algebraic independent over the prime field.
If $K$ does not contain $N$ elements that are algebraically independent over the prime field of $K$ (for example $K$ is an algebraic extension of the prime field), we may replace $K$ by its arbitrary extension field $L$ containing such $N$ elements.
When we consider a property of Hilbert series, this makes a sense since every Hilbert series is not changed by extending the field of coefficients.}

{
On the other hand, the genericness of a property for $\bm{F}\in V_{n,m,(d_1,\ldots,d_m)}$ is defined as follows:}

\begin{definition}[cf.\ {\cite[Section 1]{Pardue}}]\label{def:generic}
    For given $n$, $m$, and $(d_1,\ldots,d_m)$, a property $\mathcal{P}$ for $\bm{F}=(f_1,\ldots,f_m)\in V_{n,m,(d_1,\ldots,d_m)}$ is said to be {\it generic} if it holds on a non-empty Zariski open set in $V_{n,m,(d_1,\ldots,d_m)}$, namely there exists a non-empty Zariski open set $U$ in $V_{n,m,(d_1,\ldots,d_m)}$ such that
    \[
    U \subset \{ \bm{F}=(f_1,\ldots,f_m) \in V_{n,m,(d_1,\ldots,d_m)} : \mbox{$\bm{F}$ satisfies $\mathcal{P}$} \}.
    \]
\end{definition}

{
Here, we recall Fr\"{o}berg's important lemma for the minimality of the Hilbert series on generic sequences.
Note that the characteristic of $K$ is assumed to be $0$ in the original paper~\cite{Froberg}, but it can be easily proved that the assertion of the lemma holds for any characteristic.
We here state the lemma in a slightly modified (but equivalent) form for the convenience of our use.
}

\begin{lemma}[{\cite[Lemma 1]{Froberg}}]\label{lem:parametric-Hilbert}
    {
    With notation as above, let $\Omega$ be an extension field of $K$, and 
    let {$\mathcal{C} := \{ c_{j,t} : 1 \leq j \leq m,\ t\in \mathcal{T}_{d_j}\}$} 
    %\alpha \in \mathbb{Z}_{\geq 0}^n,\ |\alpha|=d_j \}$} 
    be a set of elements in $\Omega$ that are algebraically independent over the prime field $K_0$ of $K$. 
    {Here $\mathcal{T}_d$ is the set of monomials of degree $d$ in $\{x_1,\ldots,x_n\}$.} 
    Let $L:=K(\mathcal{C})$, let $R_L := R \otimes_K L = L[x_1,\ldots,x_n]$, and put {$g_j:= \sum_{t\in \mathcal{T}_{d_j}} c_{j,t}t \in (R_L)_{d_j}$} for each $1\leq j \leq m$.
    %$g_j:= \sum_{|\alpha|=d_j} c_{j,\alpha}x^{\alpha}\in (R_L)_{d_j}$ for each $1\leq j \leq m$.
    Namely $\bm{G}=(g_1,\ldots,g_m)$ is a {\em generic sequence}.
%     % For each $1\leq j \leq m$, put $g_j:= \sum_{|\alpha|=d_j} c_{j,\alpha}x^{\alpha}\in R_{d_j}$.
%     Let $\bm{G}=(g_1,\ldots,g_m)\in V_{n,m,(d_1,\ldots,d_m)}$ be a generic sequence.
    Then, for any $\bm{F}=(f_1,\ldots,f_m)\in V_{n,m,(d_1,\ldots,d_m)}$, we have the coefficient-wise inequality}
    \begin{equation}
        \mathrm{HS}_{R/\langle f_1,\ldots,f_m\rangle_R}(z) = \mathrm{HS}_{R_L/\langle f_1,\ldots, f_m \rangle_{R_L}} \geq \mathrm{HS}_{R_L/\langle g_1,\ldots,g_m \rangle_{R_L}}(z).
    \end{equation}
\end{lemma}

% \begin{lemma}[{\cite[Lemma 1]{Froberg}}]\label{lem:parametric-Hilbert}
%     \textcolor{red}{
%     % Let $\mathcal{C} = \{ c_{j,\alpha} : 1 \leq j \leq m,\ \alpha \in \mathbb{Z}_{\geq 0}^n,\ |\alpha|=d_j \}$ be a set of elements in $K$ that are algebraically independent over the prime field.
%     % % を独立変数の集合とし，$L:=K(\mathcal{C})$とおく（$\mathcal{C}$として$K$の拡大体の元からなる集合で$K$上代数的独立なものを考えてもよい）．
%     % %また，$R_L := R \sideset{}{_K}{\otimes} L = L[x_1,\ldots,x_n]$とし，
%     % For each $1\leq j \leq m$, put $g_j:= \sum_{|\alpha|=d_j} c_{j,\alpha}x^{\alpha}\in R_{d_j}$.
%     Let $\bm{G}=(g_1,\ldots,g_m)\in V_{n,m,(d_1,\ldots,d_m)}$ be a generic sequence.
%     Then, for any $\bm{F}=(f_1,\ldots,f_m)\in V_{n,m,(d_1,\ldots,d_m)}$, we have the coefficient-wise inequality
%     \begin{equation}
%         \mathrm{HS}_{R/\langle f_1,\ldots,f_m\rangle}(z) \geq \mathrm{HS}_{R/\langle g_1,\ldots,g_m \rangle}(z).
%     \end{equation}}
% \end{lemma}

\begin{remark}\label{rem:Diem}
{
Since every Hilbert series is not changed by extending the field of coefficients, the Hilbert series of the quotient by an ideal generated by a generic sequence depends only on $\mathrm{char}(K)$, $n$, $m$, and $(d_1,\ldots,d_m)$, see e.g., \cite[Lemma and Definition 4]{Diem}.}    
Thus, 
in discussing the Hilbert series, we may consider the ideal generated by $\bm{G}$ over $K_0(\mathcal{C})[X]$ instead of that over $K(\mathcal{C})[X]$.
{Because, the reduced Gr\"obner basis of $\langle \bm{G}\rangle_{R_L}$ coincides with  
that of \textcolor{black}{$\langle \bm{G}\rangle_{K_0(\mathcal{C})[X]}$}, 
since the well-known Buchberger's algorithm can be performed over a field, 
over which the generating set is defined. Hence 
the minimal generating set of ${\rm in}(\langle \bm{G}\rangle_{R_L})$ 
coincides with that of ${\rm in}(\langle \bm{G}\rangle_{K_0(\mathcal{C})[X]})$.
%Because. when we discuss the reduced Gr\"obner basis for $\langle \bm{G}\rangle$, 
%we may also consider $K_0(\mathcal{C})[X]$, since the Gr\"obner basis can be computed over $K_0(\mathcal{C})$.
This also implies that, for computing the reduced Gr\"obner basis of 
$\langle \bm{G}\rangle_{R_L}$, we may assume that all elements of $\mathcal{C}$ 
are algebraically independent over $K$ and compute it over $L=K(\mathcal{C})$.
}
\end{remark}

{
If $K$ is an infinite field, Fr\"{o}berg-L\"{o}fwall proved in \cite[Theorem 1]{FL90} that there exists a non-empty Zariski-open subset {$U_{\rm min} \subset V_{n,m,(d_1,\ldots,d_m)}$} on which the quotients by the corresponding ideals have the same Hilbert series, which is minimal in the coefficient-wise sense.
% Ideals corresponding to points in that Zariski-open set are said to be generic, see e.g., \cite[Section 1]{Froberg22}.
Note that, even if $K$ contains the set $\mathcal{C}$ of $N$ algebraically independent elements, $\bm{F} \in U_{\rm min}$ not necessarily implies the genericness of $\bm{F}$.
}

{
% The above two definitions on genericity are equivalent for the property that $\bm{F}=(f_1,\ldots,f_m)\in V$ satisfies \eqref{eq:semiregHil2}, i.e., $\bm{F}$ is cryptographic semi-regular.
On the other hand, the Hilbert series of $R/\langle \bm{F} \rangle$ for $\bm{F}\in U_{\rm min}$ is equal to that of $R_L/\langle \bm{G}\rangle$ for a generic sequence $\bm{G}$ over $L$ as in Lemma \ref{lem:parametric-Hilbert}.
\textcolor{black}{The existence of the non-empty Zariski open set $U_{\rm min}$} can be easily verified by considering the {\em stability} of the reduced Gr\"obner basis 
in the context of so-called a {\em comprehensive {Gr\"obner system} for a parametric ideal}. 
%recall the following result by Nabeshima on so-called parametric Gr\"{o}bner bases:}
{A} result by Nabeshima in \cite{Nabe} on ``generic Gr\"obner basis of a parametric ideal'' can be translated directly to our problem 
where the generic sequence $\bm{G}$ is given from $K[\mathcal{C}][x_1,\ldots,x_n]$ and 
$\mathcal{C}$ is the set of {\em coefficient parameters}. 
{Here we assume that all elements of $\mathcal{C}$ are algebraically independent over $K$ and 
hence they are treated as independent indeterminates. (See Remark \ref{rem:Diem}.)}
In the below, we show a useful result for our purpose.
We let $\prec_1$ be a block {ordering}
with $X:=\{x_1,\ldots,x_n\}\succ\succ \mathcal{C}$ such that 
its restriction on $X$ coincides with the given {ordering} $\prec$. 
Let $\bm{H}$ be a finite set of $K[\mathcal{C}][X]$ and 
${\rm GB}(\bm{H})$ the reduced Gr\"obner basis of the ideal $\langle \bm{H}\rangle_{L[x]}$ 
with respect to the restriction $\prec$ of $\prec_1$ on the monomials in $X$, 
{where $L=K(\mathcal{C})$}. 
Moreover, let $\widetilde{\rm GB}(\bm{H})
:=\{{\rm dlcm}(h)h: h\in {\rm GB}(\bm{H})\}$ and 
$\ell(\bm{H})=\prod_{h'\in \widetilde{\rm GB}(\bm{H})} {\rm LC}_{\prec}(h')$ in %$K[\mathcal{C}]$, 
$K[\mathcal{C}][X]$, 
where ${\rm dlcm}(h)$ denotes the least common multiple of all denominators of coefficients in 
$K(\mathcal{C})$ of $h$ and each {$h'\in \widetilde{\rm GB}(\bm{H})$} is considered as an element in $K[\mathcal{C}][X]$, 
that is, a polynomial in $X$ over $K[\mathcal{C}]$. 
For $\bm{a}\in \mathbb{A}^N(K)$, we denote by $\pi_{\bm{a}}$ the substitution homomorphism 
from $K[\mathcal{C}][X]$ to $R=K[X]$, that is, $\pi_{\bm{a}}(h(\mathcal{C},X))=h(\bm{a},X)$ 
for $h \in K[\mathcal{C},X]$. For $h\in L[X]$, when the denominator of each coefficient of $h$ 
does not vanish at $\bm{a}$, {the image} $\pi_{\bm{a}}(h)$ is {defined} naturally. 
}
%
%\textcolor{blue}{$H_D := \{ \mathrm{dclm}(h)\cdot h : h \in H\}$では？}修正しました。

{
\begin{theorem}[{\cite[Theorem 3]{Nabe}}]\label{thm:parametric}
With notation as above, consider $\bm{H}$ and $\widetilde{\rm GB}(\bm{H})$ to be subsets of $K[\mathcal{C}\cup X]$, and 
let $S(\bm{H})$ be the reduced Gr\"obner basis of the ideal quotient 
$\langle \bm{H}\rangle_{K[\mathcal{C}\cup X]}:\langle \widetilde{\rm GB}(\bm{H})\rangle_{K[\mathcal{C}\cup X]}$ with respect to $\prec_1$.
Then, {we have $S(\bm{H})\cap K[\mathcal{C}]\not=\emptyset$}, and 
for any $\bm{a}$ in $\mathbb{A}^N(\overline{K}) \smallsetminus (V_{\overline{K}}(S(\bm{H})\cap K[\mathcal{C}])\cup V_{\overline{K}}(\ell(\bm{H})))$, the set
$\pi_{\bm{a}}({\rm GB}(\bm{H}))$ is the reduced Gr\"obner basis of 
$\langle \pi_{\bm{a}}(\bm{H})\rangle_{K[X]}$ with respect to $\prec$.
\end{theorem}
}

\if 0
\textcolor{red}{We remark that a non-zero element $s(=s(\mathcal{C}))$ in $S(\bm{H})\cap K[\mathcal{C}]$ is necessary to 
let $s\widetilde{\rm GB}(\bm{H})$ belong to $\langle \bm{H}\rangle_{K[\mathcal{C}][X]}$} {by the definition of an ideal quotient.}
\textcolor{red}{For such $s$, $\pi_{\bm{a}}(s\widetilde{\rm GB}(\bm{H}))$ 
generates $\langle \pi_{\bm{a}}(\bm{H})\rangle_{K[X]}$ and so 
$\pi_{\bm{a}}({\rm GB}(\bm{H})) = \frac{\pi_{\bm{a}}(s\widetilde{\rm GB}(\bm{H}))}{\pi_{\bm{a}}(s)}$ is 
the reduced Gr\"obner basis.}
\fi

{
Now we return to our case where $\bm{H}=\bm{G}$.  
%\textcolor{blue}{and the field $K$ used in Theorem \ref{thm:parametric} shall be $K_0$.} 
Let \textcolor{black}{$\mathcal{O}=\mathbb{A}^N(K)\smallsetminus  (V_{K}(S(\bm{G})\cap K[\mathcal{C}])\cup V_{K}(\ell(\bm{G})))$}. Then $\mathcal{O}$ is 
Zariski-open in $\mathbb{A}^N(K)$. Moreover, $\mathcal{O}$ is not empty, 
since \textcolor{black}{$\langle S(\bm{G})\cap K[C]\rangle\cap \langle \ell(\bm{G})\rangle$} is a non-zero ideal and 
since $K$ is an infinite field. %$V_K(S\cap K[C])\cup V_K(\ell)$ cannot cover the whole space $\mathbb{A}^N(K)$.  
It is obvious that for any $\bm{a} \in \mathcal{O}$, $\pi_{\bm{a}}({\rm GB}(\bm{G}))$ is the reduced Gr\"obner basis 
of $\langle \pi_{\bm{a}}(\bm{G})\rangle_R$ with respect to $\prec$. Moreover,  for such $\bm{a}$,  
since ${\rm LC}_{\prec}(\pi_{\bm{a}}(h))=1$ for any $h\in {\rm GB}(\bm{G})$, we have 
${\rm LM}_{\prec}(\pi_{\bm{a}}({\rm GB}(\bm{G})))={\rm LM}_{\prec}({\rm GB}(\bm{G}))$. 
}

{
\begin{corollary}\label{cor:parametric}
Under the assumption in Theorem \ref{thm:parametric}, 
there is a non-empty Zariski open set $\mathcal{O}$ in $K^N$ such that, for any $\bm{a}$ in $\mathcal{O}$, 
the set of leading monomials of the reduced Gr\"obner basis of $\langle \pi_{\bm{a}}(\bm{G})\rangle_R$ 
coincides with that of $\langle \bm{G}\rangle_{R_L}$. 
%the initial ideal generated by ${\rm LM}(\pi_\alpha(\bm{G}))$ concides with that by 
%${\rm LM}(\bm{G})$.
\end{corollary}

For each ideal $I$ of $K[X]$, 
the \textcolor{black}{Hilbert} series ${\rm HS}_{R/I}(z)$ is determined by its initial ideal and so by the leading monomials of its (reduced) Gr\"obner basis.
Thus, Corollary \ref{cor:parametric} implies that ${\rm HS}_{R/\langle \pi_{\bm{a}}(\bm{G})\rangle}(z)$ coincides with ${\rm HS}_{R_L/\langle \bm{G}\rangle}(z)$ for any $\bm{a}\in \mathcal{O}$. 
Considering the corresponding set $U$ %=\mathcal{P}_V^{-1}(\mathcal{O})$ 
of $\mathcal{O}$ in {$V_{n,m,(d_1,\ldots,d_m)}$}, 
we have the following proposition and a conjecture. %Proposition:
}
%\textcolor{blue}{$\mathcal{O}$のnon-empty性}
%
%
% \begin{remark}\label{rm:generic}
%     Another definition of genericness is the following (cf.\ \cite[page 119]{Froberg}):
%     For given $n$, $m$, and $(d_1,\ldots,d_m)$, a property $\mathcal{P}$ for $\bm{F}=(f_1,\ldots,f_m)\in V$ is said 
%     to be generic over $K$ if it holds for any $\bm{F}=(f_1,\ldots,f_m) \in V$ such that the coefficients of $f_1,\ldots,f_m$ 
%     %are algebraically independent over $K$.
%     are algebraically independent over the prime field $L$ of $K$, that is, $L=\mathbb{Q}$ or $\mathbb{F}_p$ for some prime number $p$. 
%     %\textcolor{blue}{It can be checked that the two definitions of genericness are equivalent.}
%     % Indeed, for any $\bm{F}=(f_1,\ldots,f_m) \in V$, the coefficients of $f_1,\ldots,f_m$ are algebraically independent over $K$ if and only if $\bm{F}$ does not belong to the union of all Zariski closed sets, equivalently $\bm{F}$ belongs to the intersection of all Zariski open set.
%     % Therefore, the genericness defined in this remark implies that defined in Definition \ref{def:generic}, and vice versa.
% \end{remark}
{Here, we remark that 
the following proposition may be a well-known fact.
However, the authors could not find any reference for its proof, and therefore they would write a proof here for the readers' convenience; the idea of the proof will be used to obtain an analogous result in Section \ref{sec:generic} below.}

\begin{proposition}\label{prop:generic_equivalence}
{
    Assume that $K$ is an infinite field.
    Then, for given $n$, $m$, and $(d_1,\ldots,d_m)$, the following conditions (1), (2), (3), and (4) are equivalent to each other:
    \begin{enumerate}
        \item[(1)] There exists an {$\bm{F}=(f_1,\ldots,f_m)\in V_{n,m,(d_1,\ldots,d_m)}$} satisfying \eqref{eq:semiregHil2}, i.e., $\bm{F}$ is cryptographic semi-regular (in other words, the ideal $\langle \bm{F}\rangle$ is sufficiently generic).
        \item[(2)] For the non-empty Zariski open set $U_{\rm min}$ defined in the paragraph just after Remark \ref{rem:Diem}, any $\bm{F} =(f_1,\ldots,f_m)\in U_{\rm min}$ satisfies \eqref{eq:semiregHil2}.
        \item[(3)] The property that {$\bm{F}=(f_1,\ldots,f_m)\in V_{n,m,(d_1,\ldots,d_m)}$} satisfies \eqref{eq:semiregHil2} is generic (in terms of Definition \ref{def:generic}).
        \item[(4)] For $\mathcal{C}$, $L$, $R_L$, and $\bm{G}=(g_1,\ldots,g_m)$ as in Lemma \ref{lem:parametric-Hilbert}, we have
        \begin{equation}\label{eq:Hil-generic}
                    \mathrm{HS}_{R_L/\langle g_1,\ldots,g_m \rangle}(z) = \left[ \frac{\prod_{j=1}^m(1-z^{d_j})}{(1- z)^n}  \right].
        \end{equation}
        Namely, the generic sequence $\bm{G}$ is cryptographic semi-regular (in other words, the ideal $\langle \bm{G} \rangle_{R_L}$ is sufficiently generic).
    \end{enumerate}}
\end{proposition}

\begin{proof}
    {
    The implication (2) $\Rightarrow$ (1) is obvious.}

    {
    To see (1) $\Rightarrow$ (4), it follows from Lemma \ref{lem:Hil_lex} and Lemma \ref{lem:parametric-Hilbert} that
    \begin{equation}\label{eq:ineq}
         \mathrm{HS}_{R/\langle f_1,\ldots,f_m\rangle}(z) \geq \mathrm{HS}_{R_L/\langle g_1,\ldots,g_m \rangle}(z) \geq^{\rm lex}  \left[ 
        %\frac{\prod_{j=1}^{m}(1-z^{d_j})}{(1- z)^n} 
        \frac{\prod_{j=1}^m(1-z^{d_j})}{(1- z)^n}
        \right].
    \end{equation}
    By our assumption that $\mathrm{HS}_{R/\langle f_1,\ldots,f_m\rangle}(z)$ satisfies \eqref{eq:semiregHil2}, we have \eqref{eq:Hil-generic}.}

    {The implication (4) $\Rightarrow$ (3) follows from Corollary \ref{cor:parametric}.}

    {The implication (3) $\Rightarrow$ (2) follows from the irreducibility of {$V_{n,m,(d_1,\ldots,d_m)}$}.}
\end{proof}

Here, we recall {\it Fr\"{o}berg's conjecture}:

\begin{conjecture}[Fr\"{o}berg's conjecture~\cite{Froberg}]\label{conj:Froberg}
    Assume that $K$ is an infinite field.
    For {any} $n$, $m$, and $(d_1,\ldots,d_m)$,
    %the property that $\bm{F}=(f_1,\ldots,f_m)\in V$ satisfies 
    {the equivalent conditions (1), (2), (3), and (4) in Proposition \ref{prop:generic_equivalence} hold.}
    %\eqref{eq:semiregHil2} (i.e., $\bm{F}$ is cryptographic semi-regular) is generic.
    % In other words, a generic sequence $\bm{F}=(f_1,\ldots,f_m)\in V$ is cryptographic semi-regular.
\end{conjecture}

See Pardue's paper \cite{Pardue} for some conjectures equivalent to Fr\"{o}berg's one (Conjecture \ref{conj:Froberg}).
He also proved that Moreno-Soc\'{i}as conjecture~\cite{MS} is stronger than Conjecture \ref{conj:Froberg}, see \cite[Theorem 2]{Pardue} for a proof.

%================================
\section{Proofs of Theorems \ref{thm:mainT} and \ref{thm:WRL-GB}}
%================================

%Let $R=K[x_1,\ldots,x_n]$ (resp.\ $R' = K[x_0,x_1,\ldots, x_{n}]$) be the polynomial ring of $n$ (resp.\ $n+1$) variables over a field $K$.
{As in the previous sections, let} $R=K[x_1,\ldots,x_n]$ be the polynomial ring of $n$ variables over a field $K$ (in any characteristic){.
Let} $I$ be a proper homogeneous ideal of $R${, and let} $F = \{ f_1,\ldots,f_m \}$ be a {homogeneous} subset of $R$ generating $I$.
For each $1\leq j \leq m$, put $d_j := \deg(f_j)$ (total degree).

% \textcolor{red}{max GB deg, Lazard, FGLM}

%============================================================================
\subsection{Proof of Theorem \ref{thm:mainT}}\label{subsec:general}
%============================================================================

\if 0
\begin{proposition}\label{lem:modified}
    With notation as above, 
    %choose an integer $i \in \{ 0,1,\ldots,n\}$ and fix it.
    %Assume that the characteristic of $K$ is zero or larger than $m$. 
    %Then, 
    there exists a linear transformation $\sigma$ of variables $x_1,\ldots, x_n$ over $K$ sending $\ell$ to $x_n$ such that
    \[
        {\rm max.GB.deg}_{\prec}( F^{\sigma})\leq \mathrm{max}\{ d_{\rm reg}(\langle I, \ell \rangle_{R'}), \widetilde{d}_{\rm reg}(I) \}
    \]
    for the graded reverse lexicographic order $\prec$ on $R$ with $x_n \prec \cdots \prec x_1$.
    % \begin{enumerate}
    %     \item[(1)] If $\widetilde{d}_{\rm reg}(I) \ge d_{\rm reg}(\langle I, \ell \rangle_{R'})$, we have
    %     \[
    %     \mathrm{max.GB.deg}_{\prec} (F^{\sigma} ) \leq \widetilde{d}_{\rm reg}(I),
    %     \]
    %     \item[(2)] if the sequence $(f_1,\ldots, f_m,\ell)$ is cryptographic semi-regular, then 
    %     \[
    %     {\rm max.GB.deg}_{\prec}( F^{\sigma})\leq \mathrm{max}\{ d_{\rm reg}(\langle I, \ell \rangle_{R'}), \widetilde{d}_{\rm reg}(I) \},
    %     \]
    % \end{enumerate}
    % for any graded monomial ordering $\prec$ on $R'$ satisfying $t_1 \prec t_2$ for any two monomials $t_1$ and $t_2$ in $R'$ of the same total degree with $\deg_{x_i}(t_1) > \deg_{x_i}(t_2)$.
\end{proposition}

\begin{proof}
Since $R'/I$ has Krull dimension $\leq 1$, if necessary replacing $K$ by its finite extension with enough number of elements, there exist a non-negative integer $d$ and a linear form $\ell \in R'$ with $\ell \notin I$ such that the $K$-linear map $(R'/I)_{d-1} \to (R'/I)_d$ defined by the multiplication by $\ell$ is surjective, see \cite[Theorem 3.2]{Lazard81} (see also \cite[Theorem 3.3.4]{C14}).
In fact, from the proof of \cite[Theorem 3.3.4]{C14}, it can be shown that no field extension is necessary, if the cardinality of $K$ is greater than the number of projective zeros of $I$.
Note that, by a choice of $\ell$, the Krull dimension of $R'/\langle I,\ell \rangle$ is zero.
We also note that the multiplication-$\ell$ map $(R'/I)_{d'-1} \to (R'/I)_{d'}$ is surjective for any $d'$ with $d' \geq d$.
%We claim that $R'/\langle I,\ell \rangle$ is Artinian.
Indeed, letting $F =\{ f_1,\ldots, f_m \}$ be a set of homogeneous polynomials in $R'$ with $I = \langle F \rangle$ and considering a mapping cone of the Koszul complexes $K_{\bullet}(\bm{F})$ and $K_{\bullet}(\bm{F}')$ on $\bm{F}:=(f_1,\ldots, f_m)$ and $\bm{F}_1:=(f_1,\ldots, f_m,\ell)$, we obtain an exact sequence
    \[{
	\xymatrix{
	  H_{1}(K_{\bullet}(\bm{F}_1))_{d'} \ar[r] & H_{0}(K_{\bullet}(\bm{F}))_{d'-1} \ar[r]^{\times \ell} & H_{0}(K_{\bullet}(\bm{F}))_{d'} \ar[r] & H_{0}(K_{\bullet}(\bm{F}_1))_{d'} \ar[r] & 0 ,   \\
	}}
\]
so that $H_0 (K_{\bullet}')_{d} =(R'/\langle I, \ell \rangle)_{d} = 0$.
Therefore, for any $d'$ with $d' \geq d$, we have $H_0 (K_{\bullet}')_{d'}= 0$, whence the multiplication-$\ell$ map $(R'/I)_{d'-1} \to (R'/I)_{d'}$ is surjective.

We first note that $\widetilde{d}_{\rm reg} (I) = \widetilde{d}_{\rm reg}(I^{\tau}) $ and ${d}_{\rm reg} (\langle I,\ell \rangle_{R'}) = {d}_{\rm reg}(\langle I^{\tau},\ell^{\tau} \rangle_{R'}) $ for any invertible linear transformation $\tau$ of variables.
By permuting variables, it suffices to consider the case where $i=n$, and we may assume that the $x_{n}$-coefficient of $\ell$ is not zero, say $\ell = a_0 x_0 + a_1x_1 + \cdots + a_n x_n$ for $a_i \in K$ with $a_{n} \neq 0$.
We may also assume that $a_{n} = 1$, without loss of generality.
Then, let $\sigma$ be a linear transformation of variables $x_0,x_1,\ldots,x_{n}$ represented by the matrix
\[
    P := \begin{pmatrix}
        1  &   &  & -a_0\\
         & \ddots &  & \vdots\\
        &  & 1 & -a_{n-1}\\
         & &  & 1\\
    \end{pmatrix},
 \]
where blank entries are $0$'s.
Namely, for $h \in R'$, we define
    \[
    h^{\sigma} := h (\bm{x} \cdot P) = h(x_0,x_1,\ldots, x_{n-1},x_{n}-a_0x_0-\cdots - a_{n-1} x_{n-1})
    \]
with $\bm{x}:=(x_0,x_1,\ldots,x_n)$.
This $\sigma$ sends $\ell$ to $x_{n}$. 
Since $P$ is invertible, we have the equalities $D':=\widetilde{d}_{\rm reg}(I) = \widetilde{d}_{\rm reg}(I^{\sigma})$ and $D:=d_{\rm reg}(\langle I, \ell \rangle_{R'}) = d_{\rm reg}(\langle I^{\sigma}, x_{n} \rangle_{R'})$.
The assertion follows from Theorem \ref{thm:HS}.
\if 0
Then, similarly to the proof of \cite[Lemma 2.3.5]{KY24b}, we obtain the assertion (1).
%obtain $\mathrm{max.GB.deg}_{\prec'} (H^{\sigma} ) \leq D'$, as desired.
    % With this $\sigma$, we obtain a well-defined graded homomorphism
    % \[
    % \sigma : R' /\langle I, \ell \rangle_{R'} \to R' \langle H^{\sigma} , x_{n+1} \rangle_{R'} \ ; \ h \mapsto h^{\sigma}
    % \]
    % of degree zero.    
%     As in the proof of Theorem \ref{thm:main} below (or considering a mapping cone of Koszul complexes), we have the following exact sequence:
% \begin{equation}\label{eq:exact}
% {
% 	\xymatrix{
% 	  H_{1}(K_{\bullet}')_d \ar[r] & H_{0}(K_{\bullet})_{d-1} \ar[r]^{\times y} & H_{0}(K_{\bullet})_d \ar[r] & H_{0}(K_{\bullet}')_d \ar[r] & 0    \\
% 	}}
% \end{equation}
% for each $d$, where $K_{\bullet}$ (resp.\ $K_{\bullet}'$) denotes the Koszul complex on the sequence $(h_1,\ldots, h_m)$ (resp.\ the sequence $(h_1,\ldots, h_m, y)$).
% Here we have $H_1 (K_{\bullet}\otimes_{R'}R)_d = 0$ and $H_0(K_{\bullet}\otimes_{R'}R)_d \neq 0$ for $d \leq D-1$, whence $\dim_K(R'/\langle F^h \rangle)_{d-1} < \dim_K (R'/\langle F^h \rangle)_{d}$ for any such $d$.
% Therefore, the first argument to be proved holds.
% On the other hand, 
Indeed, we have an exact sequence
    \[{
	\xymatrix{
	  H_{1}(K_{\bullet}(\bm{G}'))_{d'} \ar[r] & H_{0}(K_{\bullet}(\bm{G}))_{d'-1} \ar[r]^{\times x_n} & H_{0}(K_{\bullet}(\bm{G}))_{d'} \ar[r] & H_{0}(K_{\bullet}(\bm{G}'))_{d'} \ar[r] & 0 ,   \\
	}}
\]
whence the same arguments as in the proof of \cite[Lemma 2.3.5]{KY24b} is applied.
The assertion (2) is also proved similarly to \cite[Proposition 2.3.6]{KY24b} since the cryptographic semi-regularity of $(f_1,\ldots,f_m,\ell)$ is clearly equivalent to that of $(f_1^{\sigma},\ldots,f_m^{\sigma},x_n)$.
\fi
% and 
% so that $H_0 (K_{\bullet}')_{d} =(R'/\langle I, \ell \rangle)_{d} = 0$.
% It follows from the definition of $d_{\rm reg}$ that $H_0(K_{\bullet}')_d  = 0$ for any $d$ with $d \geq D$.
% Thus, for any $d$ with $d \geq D$, the multiplication-by-$y$ map $H_{0}(K_{\bullet})_{d-1} \longrightarrow H_{0}(K_{\bullet})_d$ is surjective, and it is bijective if and only if $\dim_K(R'/\langle H\rangle)_{d-1} = \dim_K (R'/\langle H \rangle)_{d}$.
% Here, for any $d$ with $d \ge D'$, the multiplication-by-$y$ map $H_{0}(K_{\bullet})_{d} \longrightarrow H_{0}(K_{\bullet})_{d+1}$ is bijective, since $d+1 \ge D' +1 \ge D$.
% By this together with the surjectivity of $H_{0}(K_{\bullet})_{D'-1} \longrightarrow H_{0}(K_{\bullet})_{D'}$ (this surjectivity comes from $D'\geq D$), it follows from Lemma \ref{lem:Lazard} that $\mathrm{max.GB.deg}_{\prec'} (H )$ is upper-bounded by $D'$, as desired.
\end{proof}
\fi

\if 0
In the following, we prove $ {\rm max.GB.deg}_{\prec}( F^{\sigma}) =  {\rm max.GB.deg}_{\prec}( F)$ if $\prec$ is a graded reverse lexicographic order.

From now on, we assume that $T$ is a regular upper triangular matrix, 
that is, an element of the Borel subgroup of $\mathrm{GL}(n,K)$.
We show some basic properties of the action of $T$. 
\begin{lemma}\label{lem:GBtri}
The action of $T$ preserves the initials: 
\begin{itemize}
\item[(1)] For each monomial $x^{\alpha}$, ${\rm LM}_{\prec}(T\cdot x^{\alpha})=x^{\alpha}$. 
\item[(2)] For each polynomial $f$ in $R$, ${\rm LM}_{\prec}(T\cdot f)={\rm LM}_{\prec}(f)$.
\item[(3)] For a Gr\"obner basis $G$ of $I$ with respect to $\succ$, 
$T\cdot G$ is also a Gr\"obner basis of $T\cdot I$ with respect to $\succ$. 
This implies  $\langle {\rm LM}(I)\rangle=\langle {\rm LM}(T\cdot I)\rangle$. 
\end{itemize}
\end{lemma}
\begin{proof}
(1) Let $x^{\alpha}=x_1^{\alpha_1}\cdots x_n^{\alpha_n}$. Then, 
\[
T\cdot x^{\alpha} =  \prod_{i=1}^n T\cdot x_i^{\alpha_i}
=  \prod_{i=1}^n (t_{i,i}x_i+t_{i,i+1}x_{i+1}+\cdots +t_{i,n}x_n)^{\alpha_i}
\]
Meanwhile, the ordering is the DRL with $x_1\succ \cdots \succ x_n$, we have 
\[{\rm LM}_\succ(T\cdot x_i^{\alpha_i})=
{\rm LM}_\succ((t_{i,i}x_i+t_{i,i+1}x_{i+1}+\cdots +t_{i,n}x_n)^{\alpha_i})
=x_i^{\alpha_i}.
\]
Thus, 
\[
{\rm LM}_\succ(T\cdot x^\alpha)=\prod_{i=1}^n {\rm LM}_\succ(T\cdot x_i^{\alpha_i})
=\prod_{i=1}^n x_i^{\alpha_i}=x^{\alpha}.
\]

\noindent
(2) Let $f=\sum_{x^\alpha \in {\rm supp}(f)} c_\alpha x^{\alpha}$. 
Then, we have 
$T \cdot f=\sum_{\alpha} c_\alpha T\cdot x^{\alpha}$ 
and ${\rm LM}_\succ(T\cdot x^{\alpha})=x^\alpha$.
Therefore, 
\begin{eqnarray*}
{\rm LM}_\succ(f) & = & {\rm max}\{{\rm LM}_\succ(x^\alpha): x^\alpha \in {\rm supp}(f)\}\\
&   = & {\rm max}\{{\rm LM}_\succ(T\cdot x^\alpha): x^\alpha \in {\rm supp}(f)\}={\rm LM}_\succ(T\cdot f).
\end{eqnarray*}

\noindent
(3) Let $G'$ be a Gr\"obner basis of $T\cdot I$ with respect to $\succ$. 
As $G\subset I$, $T\cdot G\subset T\cdot I$. 
Since ${\rm LM}_\succ (T\cdot G)={\rm LM}_\succ(G)$ by (2), 
we have 
\[
\langle {\rm LM}_\succ(G')\rangle=\langle LM_\succ(T\cdot I)\rangle
\supset
\langle {\rm LM}_\succ(T\cdot G)\rangle
=\langle {\rm LM}_\succ(G)\rangle. 
\]
On the other hand, as $T$ is a bijection of $R_d/I_d$ and $R_d/(T\cdot I)_d$ 
for every non-negative integer $d$, 
we have $\dim_K R_d\langle {\rm LM}(G')\rangle_d
=\dim_K R_d/\langle {\rm LM}(G)\rangle_d$. 
Then, it follows 
\[
\langle {\rm LM}_\succ(G')\rangle =\langle {\rm LM}_\succ(G)\rangle.
\]
Therefore, we have 
\[
\langle {\rm LM}_\succ (G')\rangle=\langle {\rm LM}_\succ(T\cdot G)\rangle, 
\]
which implies that $T\cdot G$ is a Gr\"obner basis of $T\cdot I$ with respect to $\succ$. 
%である. (極少基底である.)
\end{proof}

\begin{lemma}\label{lem:GBequal}
    With notation as above, we have
    \[
    {\rm max.GB.deg}_{\prec}( F^{\sigma}) =  {\rm max.GB.deg}_{\prec}( F)
    \]
    if $\prec$ is a graded reverse lexicographical order.
\end{lemma}

\begin{proof}
    For the reduced Gr\"{o}bner basis $G$ of $\langle F \rangle_{R'}$ with respect to $\prec$, it follows from Lemma \ref{lem:GBtri} that $G^{\sigma}$ is a Gr\"{o}bner basis of $\langle F^{\sigma} \rangle_{R'}$ and the maximal total degree of $G^{\sigma}$ is equal to that of $G$, namely $\mathrm{max.GB.deg}_{\prec}(F)$.
    Since $\prec$ is graded, we have ${\rm max.GB.deg}_{\prec}( F^{\sigma}) \leq  {\rm max.GB.deg}_{\prec}( F)$.
    Similarly, we can prove ${\rm max.GB.deg}_{\prec}( F^{\sigma}) \geq  {\rm max.GB.deg}_{\prec}( F)$, as desired.
\end{proof}
\fi

% {\it Proof of \eqref{eq:maxGBbound} of Theorem \ref{thm:mainT}}.
% Suppose that $(f_1,\ldots,f_m,\ell)$ is cryptographic semi-regular and that $\prec$ is a graded reverse lexicographic order such that $x_i$ is the least among the variables.
% Then, by Proposition \ref{lem:modified} and Lemma \ref{lem:GBequal}, we obtain \eqref{eq:maxGBbound}, as desired.\qed

%\medskip

% In the following, we shall prove Theorem \ref{thm:mainT} \textcolor{red}{(1)}, for which we first prove the following easy lemma in the case where $m=n-1$:

{We first prove Theorem \ref{thm:mainT} {(1)}, which can be easily proved by Lemma \ref{lem:dreg_bound} together with the following easy lemma in the case where $m=n-1$:}

\begin{lemma}\label{lem:m=n}
    With notation as above, if $m = n-1$ and if the sequence $(f_1,\ldots,f_m,x_n)$ is cryptographic semi-regular (equivalently, it is regular since $m+1=n$), then we have $\widetilde{d}_{\rm reg}(I) = d_{\rm reg}(I+\langle x_n \rangle_{R})-1$, and hence
    \[
    \mathrm{max.GB.deg}_{\prec}(I) \leq \mathrm{max}\{ d_{\rm reg}(I + \langle x_n \rangle), \widetilde{d}_{\rm reg}(I) \} = d_{\rm reg}(I+\langle x_n \rangle)= \sum_{j=1}^{n-1} (d_j-1)+1
    \]
    for the graded reverse lexicographical ordering $\prec$ with $x_n \prec \cdots \prec x_1$.
    Moreover, $(f_1,\ldots,f_m)$ is generalized cryptographic semi-regular (equivalently, it is regular since $m<n$).
\end{lemma}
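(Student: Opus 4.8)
The plan is to reduce everything to explicit Hilbert-series computations for the two complete intersections involved, and then read off the two degrees of regularity. First I would record that, in this situation, the hypothesis that $(f_1,\ldots,f_m,\ell)$ is cryptographic semi-regular coincides with regularity: this is a sequence of $m+1=n$ forms in $n$ variables, so $\mathrm{Krull.dim}(R/\langle I,\ell\rangle)=0$ and Lemma \ref{lem:four} (with $m+1=n$ and $r=0$) makes (A), (B), (C), (D) all equivalent. Since any prefix of a regular sequence of homogeneous forms is again regular, it follows that $(f_1,\ldots,f_m)$ is regular; in particular $\mathrm{Krull.dim}(R/I)=n-m=1$, so $r=1$ throughout.

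Next I would compute the two Hilbert series. Because $(f_1,\ldots,f_m)$ is regular and $m=n-1$,
\[
\mathrm{HS}_{R/I}(z)=\frac{\prod_{j=1}^{m}(1-z^{d_j})}{(1-z)^{n}}=\frac{h_{R/I}(z)}{1-z},\qquad h_{R/I}(z)=\prod_{j=1}^{n-1}\frac{1-z^{d_j}}{1-z},
\]
where $h_{R/I}$ is a genuine polynomial of degree $\sum_{j=1}^{n-1}(d_j-1)$. Applying the formula $\mathrm{hilb}(I)=\deg h_{R/I}-r+1$ recalled above with $r=1$ gives $\mathrm{hilb}(I)=\sum_{j=1}^{n-1}(d_j-1)$, hence $\widetilde{d}_{\rm reg}(I)=\sum_{j=1}^{n-1}(d_j-1)$ by \eqref{eq:Hilb}. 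For the quotient by $\ell$ I would use that the degree-$1$ generator $\ell$ contributes a factor $(1-z)$ that cancels one power of $(1-z)$ in the denominator:
\[
\mathrm{HS}_{R/\langle I,\ell\rangle}(z)=\frac{\prod_{j=1}^{m}(1-z^{d_j})\,(1-z)}{(1-z)^{n}}=\prod_{j=1}^{n-1}\frac{1-z^{d_j}}{1-z},
\]
a polynomial of degree $\sum_{j=1}^{n-1}(d_j-1)$; since $R/\langle I,\ell\rangle$ is Artinian and $(f_1,\ldots,f_m,\ell)$ is cryptographic semi-regular, Proposition \ref{prop:Diem} gives $d_{\rm reg}(\langle I,\ell\rangle_R)=\deg \mathrm{HS}_{R/\langle I,\ell\rangle}+1=\sum_{j=1}^{n-1}(d_j-1)+1$.

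Combining the two computations yields $\widetilde{d}_{\rm reg}(I)=d_{\rm reg}(\langle I,\ell\rangle_R)-1$; in particular $\widetilde{d}_{\rm reg}(I)<d_{\rm reg}(\langle I,\ell\rangle_R)$, so the maximum of the two equals $d_{\rm reg}(\langle I,\ell\rangle_R)=\sum_{j=1}^{n-1}(d_j-1)+1$, and the asserted inequality on $\mathrm{max.GB.deg}_{\prec}(I^{\sigma})$ is then exactly Theorem \ref{thm:mainT} (1) applied here. For the final clause, regularity of $(f_1,\ldots,f_m)$ implies generalized cryptographic semi-regularity by the implication (A)$\Rightarrow$(C) of Lemma \ref{lem:four}; conversely, in our setting $r=n-m=1$, so $\langle f_1,\ldots,f_m\rangle$ has height $m$, and a height-$m$ ideal generated by $m$ forms in the Cohen--Macaulay ring $R$ is a complete intersection whose generators form a regular sequence, recovering (A).

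The main point requiring care is matching the two dimension conventions: the degree-$1$ generator $\ell$ lowers the denominator exponent by exactly one, while the Krull dimension drops from $1$ to $0$, so that the index-of-regularity formula $\mathrm{hilb}(I)=\deg h_{R/I}-r+1$ at $r=1$ and the Artinian formula $d_{\rm reg}=\deg \mathrm{HS}+1$ produce degrees differing by precisely $1$. I expect no deeper obstacle; the content is bookkeeping with Hilbert series, and I would sanity-check the offset on the case $n=2$, $m=1$ to confirm that the ``$-1$'' in $\widetilde{d}_{\rm reg}(I)=d_{\rm reg}(\langle I,\ell\rangle_R)-1$ comes out correctly.
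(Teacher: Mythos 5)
Your proposal is correct and takes essentially the same route as the paper: both arguments deduce genuine regularity of $(f_1,\ldots,f_m,\ell)$ from cryptographic semi-regularity because the number of forms equals the number of variables, and then read the two regularity degrees off the explicit Hilbert series $\prod_{j=1}^{n-1}(1+z+\cdots+z^{d_j-1})$ and its division by $1-z$ (the paper phrases the offset via partial sums of Hilbert functions, you via the $h$-polynomial formula $\mathrm{hilb}(I)=\deg h_{R/I}-r+1$, which is only a cosmetic difference). One caveat: $\mathrm{Krull.dim}(R/\langle I,\ell\rangle)=0$ does \emph{not} follow merely from having $n$ forms in $n$ variables, as you assert; it holds here because $\ell$ was chosen to avoid all projective zeros of $I$ --- and you could bypass the issue entirely by invoking only the equivalence of (A) and (B) in Lemma \ref{lem:four}, which is valid whenever the number of polynomials is at most $n$ and needs no dimension hypothesis.
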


\begin{proof}
% Recall that the the regularity of a sequence of homogeneous polynomials is equivalent to the semi-regularity if the number of polynomials is less than or equal to the number of variables.
% For the sequence $(f_1,\ldots,f_m,x_n)$, the number of polynomials and that of variables are both $n$, whence it is regular.
{Since $(f_1,\ldots,f_m,x_n)$ is regular, $(f_1,\ldots,f_m)$ is also regular, so that
\[
{\rm HS}_{R/I} (z) = {\rm HS}_{R/(I+\langle x_n \rangle)}(z) \cdot \frac{1}{1-z} = {\rm HS}_{R/(I+\langle x_n \rangle)}(z) \cdot (1+z+z^2 + \cdots)
\]
with
\[
    {\rm HS}_{R/(I+\langle x_n \rangle)}(z) = \frac{\prod_{j=1}^{n-1}(1-z^{d_j}) \cdot (1-z)}{(1-z)^{n}} = \frac{\prod_{j=1}^{n-1}(1-z^{d_j})}{(1-z)^{n-1}} = \prod_{j=1}^{n-1}\left( \sum_{k=0}^{d_j-1}z^k\right),
\]
which is a polynomial.}
{It is clear that \textcolor{black}{$\widetilde{d}_{\rm reg}(I) = d_{\rm reg}(I+\langle x_n \rangle_{R})-1$}, as desired.}
% Here, we have $\mathrm{HF}_{R/I}(d) = \sum_{i=0}^d \mathrm{HF}_{R/(I+\langle \ell \rangle)}(i)$ for any $d$.
% Since $\mathrm{HF}_{R/(I+\langle \ell \rangle)}(d) = 0$ (resp.\ $\mathrm{HF}_{R/(I+\langle \ell \rangle)}(d)\neq 0$) for $d \geq  d_{\rm reg}(I+\langle\ell \rangle_{R})$ (resp.\ $d\leq d_{\rm reg}(I+\langle \ell \rangle_{R})-1$), we have $\widetilde{d}_{\rm reg}(I) = d_{\rm reg}(I+\langle \ell \rangle_{R})-1$, as desired.
\end{proof}

\noindent {\it Proof of Theorem \ref{thm:mainT} {(1)}:}
{If $m \geq n$, our assumption that $(f_1,\ldots,f_m,x_n)$ is cryptographic semi-regular implies}
\[
\textcolor{black}{
d_{\rm reg}(I+\langle x_n \rangle ) = \deg\left(\left[\frac{\prod_{j=1}^m(1-z^{d_j})}{(1-z)^{n-1}}\right]\right)+1,}
\]
{whose right hand side is upper-bounded by $D^{(n,m)}$ for $m \geq n$.}
{On the other hand, since $(f_1,\ldots,f_m)$ is generalized cryptographic semi-regular, it follows from Lemma \ref{lem:dreg_bound} that we have the inequality \eqref{eq:easy},} the right hand side of which is {nothing but} {$D^{(n,m)}$} for $m\geq n$.
{Therefore, we obtain \eqref{eq:max_Dnew}.}

%and that $\prec$ is a graded reverse lexicographic order such that $x_i$ is the least among the variables.
{If $m=n-1$, it} follows from Lemma \ref{lem:m=n} that \textcolor{black}{$\mathrm{max}\{ d_{\rm reg}(I + \langle x_n \rangle), \widetilde{d}_{\rm reg}(I) \} $} is upper-bounded by $\sum_{j=1}^{n-1} (d_j-1)+1$, which is equal to {$D^{(n,m)}$} for $m=n-1$.\qed
%, as desired.\qed

\bigskip

% %======================================================================
% \subsection{Proof of Theorem \ref{thm:mainT} (3): Complexity estimation}
% %======================================================================

{Next, we shall prove (2) of Theorem \ref{thm:mainT}.}
For this, we estimate the complexity of computing a Gr\"{o}bner basis of a homogeneous ideal with Macaulay matrices.

% Let $F = \{ f_1,\ldots, f_m\} \subset R$ be a set of homogeneous polynomials in $R \smallsetminus K$, put $d_j := \deg (f_j)$ for $1\leq j\leq m$, and l
{Let} $\prec$ be an {\it arbitrary} graded monomial ordering on $R$.
% In this case, we have $\mathrm{sd}_{\prec}^{\rm mac}(F) = \mathrm{max.GB.deg}_{\prec}(F)$.
% Therefore, once we know the value (or an upper bound) $D$ of $\mathrm{max.GB.deg}_{\prec}(F)$, 
{It is well-known that the} complexity of the Gr\"{o}bner basis computation for {the homogeneous set} $F$ is estimated as that of computing the reduced row echelon form (RREF) of the degree-$D$ {\it Macaulay matrix}, denoted by $M_{\leq D}(F)$, where $D$ is an arbitrary upper-bound on $\mathrm{max.GB.deg}_{\prec}(F)$.
%Note that $D$ does not denote degree of regularity in this subsection.
Here, the degree-$D$ Macaulay matrix $M_{\leq D}(F)$ (in homogeneous case) is a block matrix of the form 
\[
M_{\leq D}(F) = \begin{pmatrix}
M_{D}(F) &  & \\
& \ddots &\\
& & M_{d_0}(F)
\end{pmatrix}
\]
with $d_0:=\min \{ \deg (f_j) : 1 \leq j \leq m \}$, where each $M_d(F)$ is a {\it homogeneous} Macaulay matrix defined as follows:
For each $d$ with $d \geq d_0$, the degree-$d$ homogeneous Macaulay matrix $M_d(F)$ of $F$ has columns indexed by the {monomials} of $R_d$ sorted, from left to right, according to the chosen order $\prec$.
The rows of $M_d(F)$ are indexed by the polynomials $m_{i,j}f_j$, where $m_{i,j} \in R$ is a {monomial} such that $\deg(m_{i,j}f_j ) = d$.
\textcolor{black}{For a monomial $t\in \mathcal{T}_d$ of degree $d$, the $(m_{i,j}f_j, t)$-entry of $M_d(F)$ is defined as the coefficient of $t$ in $m_{i,j}f_j$.}

\begin{theorem}\label{thm:complexity}
    With notation as above, we have the following:
    \begin{enumerate}
        \item[(1)] {\cite[Proposition 1]{BFS-F5}} The RREF of $M_{\leq D}(F)$ is computed in
    \begin{equation}\label{eq:DGB_complexity0}
        O\left(m D \binom{n+D-1}{D}^{\omega} \right).
    \end{equation}
    Hence, if $\prec$ is a graded \textcolor{black}{ordering}, then the complexity of the Gr\"{o}bner basis computation for $F$ is upper-bounded {by \eqref{eq:DGB_complexity0} for} $D=\mathrm{max.GB.deg}_{\prec}(F) $.
    \item[(2)] Let $D$ be a non-negative integer such that $D = O(n)$.
    Then, the RREF of $M_{\leq D}(F)$ is computed in
    \begin{equation}\label{eq:DGB_complexity}
        O\left(m \binom{n+D-1}{D}^{\omega} \right).
    \end{equation}
    Hence, if $\prec$ is a graded \textcolor{black}{ordering}, and if $\mathrm{max.GB.deg}_{\prec}(F) = D=O(n)$, then the complexity of the Gr\"{o}bner basis computation for $F$ is upper-bounded by \eqref{eq:DGB_complexity}.
    \end{enumerate}
\end{theorem}

\begin{proof}
It suffices to estimate the complexity of computing the RREFs of the homogeneous Macaulay matrices $M_{D}(F), \ldots , M_{d_0}(F)$.
For each $d$ with $d_0 \leq d \leq D$, the number of rows (resp.\ columns) in $M_d(F)$ is $\displaystyle \sum_{1\leq i \leq m,\ d \geq d_i} \binom{n-1+d-d_i}{d-d_i}$ (resp.\ $\binom{n-1+d}{d}$).
It follows from $d - d_i \leq d - 1$ that
\[
\sum_{1\leq i \leq m,\ d \geq d_i} \binom{n-1+d-d_i}{d-d_i} \leq \sum_{i=1}^m \binom{n-1+d-1}{d-1}= m \binom{n-1+d-1}{d-1},
\]
whence the number of rows in $M_d(F)$ is upper-bounded by $m \binom{n-1+d-1}{d-1}$.
Therefore, the complexity of the row reductions on $M_D(F),\ldots,M_{d_0}(F)$ is upper-bounded by
\begin{align}
    \sum_{d=d_0}^D m \binom{n-1+d-1}{d-1} \binom{n-1+d}{d}^{\omega-1} & \leq m \sum_{d=1}^D \binom{n-1+d-1}{d-1} \binom{n-1+d}{d}^{\omega-1} \nonumber \\
    & \leq m \sum_{d=1}^D \binom{n-1+d}{d}^{\omega}, \label{eq:complexity_est}
\end{align}
where we used a fact that the reduced row echelon form of a $k \times \ell$ matrix $A$ over a field $K$ can be computed in $O(k \ell^{\omega-1})$, see Remark \ref{rem:RREF} below for its proof.
Clearly \eqref{eq:complexity_est} is upper-bounded by $m D\binom{n+D-1}{D}^{\omega}$, we obtain the estimation \eqref{eq:DGB_complexity0} in (1).

In the following, we shall improve the estimation \eqref{eq:DGB_complexity0} in (1) to obtain \eqref{eq:DGB_complexity} in (2), assuming that $D=O(n)$.
Here, for each $d$ with $1\leq d\leq D-1$, we have
\[
\binom{n-1+d}{d}=\frac{(n-1+d)!}{(n-1)!d!}=
\binom{n-1+d+1}{d+1}\times \frac{d+1}{n+d}.
\]
Repeating this procedure, we obtain
\begin{align*}
&\binom{n-1+d+1}{d+1}\times \frac{d+1}{n+d} = 
\binom{n-1+d+2}{d+2} \times \frac{d+2}{n+d+1} \times \frac{d+1}{n+d}\\
&= \cdots = \binom{n-1+D}{D} \times \frac{D}{n+D-1} \times \cdots \times \frac{d+2}{n+d+1} \times \frac{d+1}{n+d}.
\end{align*}
It is clear that $\frac{d'+1}{n+d'}=1-\frac{n-1}{n+d'}$ is monotonically increasing with respect to $d'$. 
Putting $c := (D-1)/n$, we have $D = cn + 1$, so that
\[
\frac{d'+1}{n+d'} \leq \frac{D}{n+D-1} = \frac{cn+1}{(c+1)n} = \frac{c+\frac{1}{n}}{c+1}
\]
for any $d'$ with $d' \leq D-1$.
As we consider $n \to \infty$, we may suppose that 
\[
\frac{d'+1}{n+d'} \leq \frac{c+\frac{1}{n}}{c+1} < \frac{c+\varepsilon}{c+1}<1
\]
for some constant $\varepsilon$ with $0 < \varepsilon < 1$.
% すなわち $c\leq 3$ であれば, $(\frac{c}{c+1})^{\omega}<\frac{1}{2}$ となるので, 
% \[
% \sum_{d=1}^D \binom{n-1+d}{d}^{\omega}< \binom{n-1+D}{D}^{\omega}\sum_{k=0}^{D-1} (\frac{1}{2})^k
% < 2\binom{n-1+D}{D}^{\omega}
% \]
% となる. (生成系が quadratic の場合には, semi-regularity から $c\approx 1$)
% 一般の $c$ では, 以下で評価される. 
Therefore, we obtain
\[
 \binom{n-1+D}{D} \times \frac{D}{n+D-1} \times \cdots \times \frac{d+2}{n+d+1} \times \frac{d+1}{n+d} < \binom{n-1+D}{D}\left( \frac{c+\varepsilon}{c+1} \right)^{D-d},
\]
whence
\begin{align*}
    \sum_{d=1}^D \binom{n-1+d}{d}^{\omega}&< \binom{n-1+D}{D}^{\omega}\sum_{k=0}^{D-1} \left(\frac{c+\varepsilon}{c+1}\right)^{k\omega} \\
    &= \binom{n-1+D}{D}^{\omega} \times \frac{1-\left(\frac{c+\varepsilon}{c+1} \right)^{D\omega}}{1-\left( \frac{c+\varepsilon}{c+1}\right)^{\omega}}\\
&< \binom{n-1+D}{D}^{\omega} \times \frac{1}{1-\left(\frac{c+\varepsilon}{c+1}\right)^{\omega}},
\end{align*}
where we put $k := D-d$.
It follows from $D = O(n)$ that $c$ is upper-bounded by a constant.
\textcolor{black}{By $0 < \varepsilon<1$, it is clear that $\frac{c+\varepsilon}{c+1}$ is also upper-bounded by a positive constant smaller than $1$, whence $1/{(1-(\frac{c+\varepsilon}{c+1})^{\omega})} = O(1)$ as $n \to \infty$.}
We have proved the assertion (2).
% \[
% O\left(m \binom{n+D-1}{D}^{\omega} \right)
% \]
% となる.
%後半の主張は定理\ref{th:homGBdegSD}から従う.
\end{proof}

% \begin{corollary}\label{cor:complexity}
%     For an inhomogeneous $F = \{ f_1,\ldots,f_m \}\subset R$, and for a graded monomial ordering $\prec$ on $R$, we put $D_{\rm max}=\mathrm{max.GB.deg}_{\prec}(F^h)$.
%     If $D_{\rm max} =O(n)$, then the complexity of computing a Gr\"{o}bner basis $G$ of $\langle F^h \rangle$ with respect to $\prec^h$ is 
%             \[
%     O\left(m \binom{n+D_{\rm max}}{D_{\rm max}}^{\omega} \right).
%     \]
%     Hence, a Gr\"{o}bner basis of $\langle F \rangle$ with respect to $\prec$ can be computed with the same complexity, if the complexity of substituting $y=1$ to $G$ is negligible.
% \end{corollary}

\begin{remark}\label{rem:RREF}
In the proof of Theorem \ref{thm:complexity}, we used a fact that the reduced row echelon form of a $k \times \ell$ matrix $A$ over a field $K$ can be computed in $O(k \ell^{\omega-1})$, where $k$ can be greater than $\ell$.
    This fact is proved by considering the following procedures to compute the reduced row echelon form of $A$:
    \begin{enumerate}
        \item[(0)] Write $k = \ell q + r$ for $q, r \in \mathbb{Z}$ with $0 \leq r < \ell$.
        \item[(1)] Choose and remove $2 \ell$ rows from $A$, and let $A'$ be a $2\ell \times \ell$ matrix whose rows are the chosen $2\ell $ rows.
        \item[(2)] Compute the reduced row echelon form of $A'$:
        Inserting $\ell$ zeros to the end of each row of $A'$, construct a $2\ell \times 2\ell$ square matrix, and compute its reduced row echelon form.
        \item[(3)] As a result of (2), we obtain more than or equal to $\ell$ zero row vectors.
        We add the nonzero (linearly independent) row vectors obtained to $A$.
        \item[(4)] Go back to (1).
    \end{enumerate}
    Repeating the procedures from (1) to (4) at most $q$ times, we obtain the reduced row echelon form $B$ of the initial $A$.
    Since the complexity of (2) is $(2\ell)^{\omega}$, the total complexity to obtain $B$ is $q \cdot (2\ell)^{\omega} \leq (k/\ell) \cdot 2^{\omega} \ell^{\omega} = O(k\ell^{\omega-1})$, as desired.
\end{remark}

\noindent {\it Proof of Theorem \ref{thm:mainT} {(2)}:}
For the graded reverse lexicographical ordering $\prec$ on $R$ with $x_n \prec\cdots \prec x_1$, it follows from Theorem \ref{thm:mainT} {(1)} that {$D^{(n,m)}$} gives an upper bound on $\mathrm{max.GB.deg}_{\prec}(F)$.
Therefore, a Gr\"{o}bner basis of $\langle F \rangle_{R}$ is obtained by computing the RREF of $M_{\leq D}(F)$ for $D = D^{(n,m)}$.
Here, the value $D^{(n,m)}$ of \eqref{eq:newbound} is maximized at $m=n$:
In this case, $D^{(n,m)}$ is equal to the Lazard's bound $\sum_{j=1}^{n}(d_j-1)+1$, which is $O(n)$ for fixed $d_1,\ldots, d_m$.
Therefore, it follows from Theorem \ref{thm:complexity} (2) that the RREF of $M_{\leq D}(F)$ for $D = D^{(n,m)}$ is computed in the complexity \eqref{eq:new_comp_bound}, as desired.\qed

\medskip

\begin{example}\label{ex:optimal}
The bound $D^{(n,m)}$ in \eqref{eq:max_Dnew} is optimal for a homogeneous ideal $I$ satisfying the assumptions of Theorem \ref{thm:mainT}. 
For example, considering the case where $K=\mathbb{F}_{31}$ and $(n,m)=(3,4)$, and letting $F=\{f_1,f_2,f_3,f_4\}$ be a generating set for $I$ with
\begin{eqnarray*}
    f_1 &=& 3 x_1^2 + 20 x_1 x_2 + x_2^2 + 7 x_1 x_3 + 15x_2 x_3,\\
    f_2 &=& 18 x_1^2 + 15 x_1 x_2 + 14 x_2^2 + 24 x_1 x_3 + 4x_2x_3,\\
    f_3 &=& 6 x_1^2 + 16 x_1 x_2 + x_2^2 + x_1 x_3 + 8 x_2 x_3,\\
    f_4 &=& 21 x_1^2 + x_1 x_2 + 21 x_2^2 + 24 x_1 x_3 + 6x_2 x_3,
\end{eqnarray*}
we have $\mathrm{Krull.dim}(R/I)=1$ and
\begin{eqnarray*}
    \mathrm{HS}_{R/(I+\langle x_3\rangle)}(z)= 1 + 2z,\quad \mathrm{HS}_{R/I}(z) = 1+3z+2z^2+z^3+z^4+z^5+\cdots
\end{eqnarray*}
with $d_{\rm reg}(I+\langle x_3 \rangle) = 2$ and $\widetilde{d}_{\rm reg}(I)=3$, and therefore $(f_1,f_2,f_3,f_4,x_3)$ (resp.\ $(f_1,f_2,f_3,f_4)$) is cryptographic semi-regular (resp.\ generalized cryptographic semi-regular).
One can easily verify $D^{(3,4)}=3$.
The reduced Gr\"{o}bner basis of $I$ with respect to the graded reverse lexicographical ordering $\prec$ with $x_3\prec x_2 \prec x_1$ is
\[
\{ x_2 x_3^2, x_1^2 + 27 x_2x_3, x_1x_2 + 14x_2x_3, x_2^2 + 30x_2x_3, x_1x_3 + 26x_2x_3\},
\]
whence $\mathrm{max.GB.deg}_{\prec}(I) = 3$.
Namely, we have
\[
3 = \mathrm{max.GB.deg}_{\prec}(I)  = \max \{ d_{\rm reg}(I+\langle x_n \rangle),\widetilde{d}_{\rm reg}(I)\} = \widetilde{d}_{\rm reg}(I)= D^{(n,m)}.
\]

On the other hand, $\mathrm{max}\{ d_{\rm reg}(I+\langle x_n \rangle), \widetilde{d}_{\rm reg}(I) \}$ in \eqref{eq:max_Dnew} can be strictly less than $D^{(n,m)}$, e.g., when $F:=\{ f_1:=23x_1^2+26x_1x_2+13x_2^2,f_2:=29x_1^2+x_1x_2+x_2^2\}$ with $n=m=2$ and $K=\mathbb{F}_{31}$, a straightforward computation shows that $I=\langle F \rangle$ satisfies the assumptions of Theorem \ref{thm:mainT} with $\mathrm{max.GB.deg}_{\prec}(I)=2$ and $d_{\rm reg}(I+\langle x_2 \rangle)=  \widetilde{d}_{\rm reg}(I) = 2<D^{(2,2)}=3$.
(In this case, we can replace $D^{(n,m)}$ by $D^{(n,m)}-1$ in \eqref{eq:new_comp_bound}).    
\end{example}

% In our setting of Theorem \ref{thm:mainT}, the condition $\mathrm{max.GB.deg}_{\prec}(F) =O(n)$ holds, when $d_1,\ldots,d_m$ are fixed.
% Indeed, by the assumption that $\bm{F}$ is $\widetilde{d}(\langle F \rangle_{R'})$-regular, recall that $D_{\rm max}$ is upper-bounded by the value $D_{\rm new}$ of \eqref{eq:newbound}, which is maximized at $m=n+1$.

% \medskip

% Theorem \ref{thm:complexity} (2) can be also applied to the MQ problem (cf.\ \cite{MQ}), where all the total degrees of input polynomials are equal to two.

% \begin{remark}\label{rem:MQ}
%      In Corollary \ref{cor:complexity}, the assumption that $D_{\rm max}=O(n)$ holds in our setting in Subsection \ref{subsec:exp}, when $d_1,\ldots,d_m$ are fixed.

%     %  If we do not assume $D = O(n)$ in Theorem \ref{thm:complexity}, a formula on the complexity is given by $O(m D \binom{n+D-1}{D}^{\omega})$, see \cite[Proposition 1]{BFS-F5}.
%     % This formula is obtained by simply upper-bounding the sum in \eqref{eq:complexity_est} by $D\binom{n+D-1}{D}^{\omega}$.
% \end{remark}

% %===============================================
% \section{Possible applications to cryptanalysis}
% %===============================================

%==========================================
\subsection{Proof of Theorem \ref{thm:WRL-GB}}
%==========================================

As in the previous subsection, let $\prec$ be the graded reverse lexicographical ordering on $R=K[x_1,\ldots,x_n]$ with $x_n \prec \cdots \prec x_1$.
We also denote the restriction of $\prec$ on $K[x_1,\ldots,x_{n-1}]$ by the same notation $\prec$.
In the following, we shall prove {Theorem \ref{thm:WRL-GB}}, namely the inequality \eqref{eq:HS} becomes an equality 
if {the initial ideal ${\rm in}_{\prec}(I)=\langle \mathrm{LM}_{\prec}(I)\rangle_R$} is weakly reverse lexicographic.
Here, a weakly reverse lexicographic ideal is a monomial ideal $J$ such that if $x^{\alpha}$ is one of the minimal generators of $J$ then every monomial of the same degree which preceeds $x^{\alpha}$ must belong to $J$ as well (see also e.g., \cite[Section 4]{Pardue} for the definition).

As in \cite{Hashemi2010} and \cite{HS} (see also \cite{KY} and \cite{KY24b}), we consider the elimination ideal
\[
\textcolor{black}{    I' := (I+\langle x_n \rangle_R) \cap K[ x_1,\ldots,x_{n-1} ] = \langle f_1|_{x_n=0},\ldots,f_m|_{x_n = 0} \rangle_{K[x_1,\ldots,x_{n-1}]}.}
\]
Since $R/( I + \langle x_n \rangle)$ and $K[x_1,\ldots,x_{n-1}]/I'$ are isomorphic as graded rings, they have the same Hilbert series, and $d_{\rm reg}(I') = d_{\rm reg}(I+\langle x_n \rangle_R)$.
Put $D := d_{\rm reg}(I') = d_{\rm reg}(I+\langle x_n \rangle_R)$.
By comparing Hilbert series, it can be also easily checked that, the sequence $(f_1,\ldots,f_m,x_n)\in R^{m+1}$ is cryptographic semi-regular if and only if {$(f_1|_{x_n=0},\ldots,f_m|_{x_n=0}) \in K[x_1,\ldots,x_{n-1}]^m$} is cryptographic semi-regular, equivalently these sequences are both $D$-regular.

Let $G$ (resp.\ $G'$) be the reduced Gr\"{o}bner basis of $I$ (resp.\ $I'$) with respect to $\prec$.
Note that, since {$I+\langle x_n \rangle_R = I'R + \langle x_n \rangle_R$}, the reduced Gr\"{o}bner basis of $I+\langle x_n \rangle_R$ is given by $G' \cup \{x_n \}$, whence $\mathrm{max.GB.deg}_{\prec}(I') = \mathrm{max.GB.deg}_{\prec}(I+\langle x_n \rangle_R)$.

To prove {Theorem} \ref{thm:WRL-GB}, we start with extending some results in \cite{KY} to our case.

\begin{lemma}[cf.\ {\cite[Corollary 2]{KY}}]\label{lem:LMideal}
    With notation as above, we assume that $(f_1,\ldots,f_m,x_n)\in R^{m+1}$ is cryptographic semi-regular (i.e., {$(f_1|_{x_n=0},\ldots,f_m|_{x_n=0}) \in K[x_1,\ldots,x_{n-1}]^m$} is cryptographic semi-regular), namely both the sequences are $D$-regular with $D := d_{\rm reg}(I') = d_{\rm reg}(I+\langle x_n \rangle_R)$.
    Then, we have
    \begin{equation}\label{eq:LMideal}
        (\langle \mathrm{LM}_{\prec}(I) \rangle_R)_d = (\langle \mathrm{LM}_{\prec}(I') \rangle_R)_d
    \end{equation}
    for any $d$ with $d < D$.
\end{lemma}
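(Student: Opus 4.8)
The plan is to prove the two inclusions separately: the inclusion $(\langle \mathrm{LM}_{\prec}(I') \rangle_R)_d \subseteq (\langle \mathrm{LM}_{\prec}(I) \rangle_R)_d$ I expect to hold in \emph{every} degree with no hypothesis, while the reverse inclusion for $d < D$ I would obtain indirectly, by a dimension count that turns an equality of Hilbert functions into the desired equality of monomial spaces. The structural tool throughout is the defining feature of the chosen grevlex order with $x_n$ smallest: among monomials of a fixed degree, any monomial not divisible by $x_n$ is $\prec$-greater than every monomial divisible by $x_n$. I would first record the consequence that for a homogeneous $f \in R$ with $f^{\mathrm{top}} := f|_{x_n = 0} \neq 0$ one has $\mathrm{LM}_{\prec}(f) = \mathrm{LM}_{\prec}(f^{\mathrm{top}})$, and that $\mathrm{LM}_{\prec}(f)$ is then not divisible by $x_n$.

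For the easy inclusion, I would take a nonzero homogeneous $g \in I'$. Since $I' = \langle I, x_n \rangle_R \cap K[x_1,\ldots,x_{n-1}]$, I can write $g = p + x_n q$ with $p \in I$ homogeneous, and specializing $x_n = 0$ gives $g = p^{\mathrm{top}}$. As $g \neq 0$, the order fact yields $\mathrm{LM}_{\prec}(g) = \mathrm{LM}_{\prec}(p) \in \mathrm{LM}_{\prec}(I)$. Hence every generator of $\langle \mathrm{LM}_{\prec}(I') \rangle_R$ (a monomial in $x_1,\ldots,x_{n-1}$) lies in $\langle \mathrm{LM}_{\prec}(I) \rangle_R$, giving $J' := \langle \mathrm{LM}_{\prec}(I') \rangle_R \subseteq J := \langle \mathrm{LM}_{\prec}(I) \rangle_R$ in all degrees.

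For the reverse inclusion in degrees $d < D$, I would compare dimensions. By Macaulay's theorem $\mathrm{HF}_{R/J} = \mathrm{HF}_{R/I}$. Since $J'$ is generated by monomials in $x_1,\ldots,x_{n-1}$ only, treating $x_n$ as a free variable and using the graded isomorphism $R/\langle I, x_n \rangle \cong K[x_1,\ldots,x_{n-1}]/I'$ gives $\mathrm{HF}_{R/J'}(d) = \sum_{i=0}^{d} \mathrm{HF}_{R/\langle I, x_n \rangle}(i)$. On the other side, the $D$-regularity of $(f_1,\ldots,f_m,x_n)$, via condition (1) of Theorem \ref{lem:Diem2} applied to the final member $x_n$, makes the multiplication-by-$x_n$ map $(R/I)_{t-1} \to (R/I)_t$ injective for $1 \le t < D$; feeding this into the exact sequence $(R/I)_{t-1} \xrightarrow{\times x_n} (R/I)_t \to (R/\langle I, x_n\rangle)_t \to 0$ yields $\mathrm{HF}_{R/I}(t) = \mathrm{HF}_{R/I}(t-1) + \mathrm{HF}_{R/\langle I, x_n\rangle}(t)$, and summing from the base case $\mathrm{HF}_{R/I}(0) = 1 = \mathrm{HF}_{R/\langle I, x_n\rangle}(0)$ (using that $I$ is proper) gives $\mathrm{HF}_{R/I}(d) = \sum_{i=0}^{d} \mathrm{HF}_{R/\langle I, x_n\rangle}(i)$ for every $d < D$. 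Thus $\mathrm{HF}_{R/I}(d) = \mathrm{HF}_{R/J'}(d)$, so $\dim_K J_d = \dim_K J'_d$; combined with $J'_d \subseteq J_d$ this forces $J_d = J'_d$.

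The main obstacle I anticipate is the translation of the regularity hypothesis into usable numerics, i.e.\ extracting the injectivity of $\times x_n$ and the partial-sum identity $\mathrm{HF}_{R/I}(d) = \sum_{i \le d} \mathrm{HF}_{R/\langle I,x_n\rangle}(i)$; the remaining ingredients (the reverse-lex order fact and Macaulay's equality of Hilbert functions) are routine. I would take care over the exact range of degrees, checking that $d < D$ supplies precisely the injectivity indices $1 \le t \le d \le D-1$ needed, so that the identity holds exactly up to $d = D-1$, matching the stated bound.
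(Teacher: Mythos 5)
Your proof is correct, and while it shares the paper's overall skeleton (the easy inclusion $\langle \mathrm{LM}_{\prec}(I') \rangle_R \subseteq \langle \mathrm{LM}_{\prec}(I) \rangle_R$ via the grevlex fact, then a dimension count through Macaulay's theorem that forces equality in degrees $d < D$), it derives the key numerical identity $\mathrm{HF}_{R/I}(d) = \sum_{i=0}^{d} \mathrm{HF}_{R/\langle I,x_n\rangle}(i)$ by a genuinely different and more economical mechanism. The paper gets this identity from condition (2) of Theorem \ref{lem:Diem2}: it invokes \cite[Proposition 2 a)]{Diem2} to deduce that the subsequence $(f_1,\ldots,f_m)$ is also $D$-regular, writes both $\mathrm{HS}_{R/I}$ and $\mathrm{HS}_{R/\langle I,x_n\rangle}$ in the standard truncated form modulo $z^D$, divides by $(1-z)$, and then converts the resulting congruence into dimension counts using the decomposition $(I'R)_d = \bigoplus_{i} x_n^{d-i} I'_i$. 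You instead use only condition (1) of Theorem \ref{lem:Diem2} applied to the last member $x_n$ of the sequence — injectivity of $\times x_n$ on $(R/I)_{t-1} \to (R/I)_t$ for $1 \leq t < D$ — fed into the multiplication exact sequence and telescoped. This buys two things: you avoid both the subsequence-regularity citation and the power-series manipulation, and your argument makes visible that the lemma really only needs the weaker hypothesis that multiplication by $x_n$ is injective on $R/I$ below degree $D$, not the full cryptographic semi-regularity of $(f_1,\ldots,f_m,x_n)$. What the paper's route buys is that the explicit Hilbert-series congruence chain it establishes is the same computation reused in its other arguments (e.g.\ Lemma \ref{lem:m=n}), so the bookkeeping is uniform across the paper. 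One small point to make explicit in your write-up: your count $\mathrm{HF}_{R/J'}(d) = \sum_{i=0}^{d}\mathrm{HF}_{R/\langle I,x_n\rangle}(i)$ silently uses Macaulay's theorem a second time, for $I'$ inside $K[x_1,\ldots,x_{n-1}]$ (to pass from standard monomials of $\langle \mathrm{LM}_{\prec}(I')\rangle$ to $\mathrm{HF}_{K[x_1,\ldots,x_{n-1}]/I'}$); this is routine and plays the role of the paper's direct-sum decomposition of $(I'R)_d$, but it should be stated.
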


\begin{proof}
We first prove $(\langle \mathrm{LM}_{\prec}(I) \rangle_{R})_d \supset (\langle \mathrm{LM}_{\prec}(I') \rangle_{R})_d $ for any non-negative integer $d$.
For this, it suffices to show that $\mathrm{LM}_{\prec}(f) \in \mathrm{LM}_{\prec}(I)$ for any $f \in I_d' \smallsetminus \{ 0 \}$.
We may assume that $f$ is homogeneous, and then it is written as $f=g + x_n h$ for some $g \in I_d$ and $h \in R_{d-1}$.
Here, $g$ is not divisible by $x_n$, since otherwise $f \notin K[x_1,\ldots,x_{n-1}]$, a contradiction.
Therefore, we have $\mathrm{LM}_{\prec}(f) = \mathrm{LM}_{\prec}(g)$ with $g \in I$, as desired.
%Note that $g \neq 0$ since $f \in I'\smallsetminus \{ 0 \}$.
% Also by writing $g = g_0 + x_n g_1 $ with $g_0 \in K[x_1,\ldots, x_{n-1}]_{\deg(f)}$ and $g_1 \in R_{\deg(f)-1}$, we have $f = g_0 + x_n (g_1 + h)$.
% Here, we claim $g_0 \neq 0$, since otherwise $g_1\in I$ by our assumption that $(f_1,\ldots,f_m,x_n)$ is $D$-regular.
% so that $\mathrm{LM}_{\prec}(f) = \mathrm{LM}_{\prec}(g_0) = \mathrm{LM}_{\prec}(g)$ with $g \in I$, as desired.
\if 0
We prove $(\langle \mathrm{LM}(I) \rangle_{R'})_d \subset (\langle \mathrm{LM}(\overline{I}) \rangle_{R'})_d $ by the induction on $d$.
The case where $ d =0$ is clear from Theorem \ref{thm:main}, and so we assume $d >0$.
\textcolor{black}{Any element in $(\langle \mathrm{LM}(\tilde{I}) \rangle_{R'})_d$ is represented as a finite sum of elements in $R'$ of the form $g \cdot \mathrm{LM}(h)$ with $g \in R'$, $h \in \tilde{I}$, and $\mathrm{deg}(gh) = d$.
Hence, we can also write each $g \cdot \mathrm{LM}(h)$ as a $K$-linear combination of elements of the form $\mathrm{LM}(t h)$ for a monomial $t$ in $R'$ of dgeree $\mathrm{deg}(g)$, where $t h$ is an element in $\tilde{I}$ of degree $d$. 
Therefore, it suffices for showing ``$\subset$'' to prove that $\mathrm{LM}(f) \in (\langle \mathrm{LM}(\overline{I}) \rangle_{R'})_d$ for any $f \in \tilde{I}$ with $\mathrm{deg}(f) = d$.}
%Let $f \in \tilde{I}$ with $\mathrm{deg}(\mathrm{LM}(f)) = d$.
We may assume that $f$ is homogeneous.
It is straightforward that $f|_{y=0} \in \overline{I}_{\leq d}$.
If $\mathrm{LM}(f) \in R=K[x_1,\ldots , x_{n}]$, then we have $\mathrm{LM}(f) = {\rm LM}(f|_{y=0}) \in \mathrm{LM}(\overline{I})$.
Thus, we may also assume that $y \mid \mathrm{LM}(f)$.
%ここで$m \in K[x_1,\ldots ,x_{n-1}]_{\leq D-1}$
In this case, it follows from the definition of the DRL ordering that any other term in $f$ is also divisible by $y$, so that $f \in \langle y \rangle_{R'}$.
Thus, we can write $f = y h$ for some $h \in R'$, where $h$ is homogeneous of degree $d-1$.
As in the proof of Theorem \ref{thm:main}, the multiplication map
\[
(R'/\tilde{I})_{d'-1} \to (R'/\tilde{I})_{d'} \ ; \ h' \bmod{\tilde{I}} \mapsto  y h' \bmod{\tilde{I}}
\]
is injective for any $d' < d_\mathrm{reg}( \overline{I} )$, since $F$ is cryptographic semi-regular. 
Therefore, it follows from $f \in \tilde{I}_d$ that $h \in \tilde{I}_{d-1}$, whence $f = y h \in y \tilde{I}_{d-1}$.
By the induction hypothesis, there exists $g\in \overline{I}$ such that $\mathrm{LM}(g) \mid \mathrm{LM}(h)$, whence $\mathrm{LM}(f) \in (\langle \mathrm{LM}(\overline{I}) \rangle_{R'})_d$.
\fi

    Since $(f_1,\ldots,f_m,x_n)$ is $D$-regular, it follows from \cite[Proposition 2 a)]{Diem2} (cf.\ \cite[Corollary 1]{KY}), its sub-sequence $(f_1,\ldots,f_m)$ is also $D$-regular.
    Hence, we have
    \begin{align*}
            {\rm HS}_{R/I} (z)  & \equiv  \frac{\prod_{j=1}^m(1-z^{d_j})}{(1-z)^{n}} \equiv  \frac{\prod_{j=1}^m(1-z^{d_j}) \cdot (1-z)}{(1-z)^{n}} \cdot \frac{1}{1-z} \\
            & \equiv {\rm HS}_{R/(I+\langle x_n \rangle)}(z) \cdot \frac{1}{1-z} \equiv {\rm HS}_{R/( I+\langle x_n \rangle)}(z) \cdot (1+z+z^2 + \cdots) \\
            &\equiv {\rm HS}_{K[x_1,\ldots,x_{n-1}]/ I'}(z) \cdot (1+z+z^2 + \cdots) \pmod{z^D}.
    \end{align*}
    Thus, for any $d$ with $d < D  = d_{\rm reg}(I')$, one has ${\rm HF}_{R/I}(d) = \sum_{i=0}^d \mathrm{HF}_{K[x_1,\ldots,x_{n-1}]/I'}(i)$, so that
        \begin{equation}\label{eq:dim_eq}
        \dim_K R_d - \dim_K I_d = \sum_{i=0}^d (\dim_K K[x_1,\ldots,x_{n-1}]_i - \dim_K I'_i) .
        \end{equation}
    Since $(JR)_d = x_n^{d} J_0 \oplus x_n^{d-1} J_1 \oplus \cdots \oplus x_n J_{d-1} \oplus J_d$ for any of $J=I'$ and $J=K[x_1,\ldots,x_{n-1}]$ (note that this holds also for $d \geq D$), the right hand side of \eqref{eq:dim_eq} is equal to $\dim_K R_d - \dim_K (I'R)_d$, so that $\dim_K (I'R)_d = \dim_K I_d$.
    Here, it follows from \cite[Theorem 5.2.6]{Singular} that $\dim_K (\langle \mathrm{LM}_{\prec}(I) \rangle_R)_d = \dim_K (\langle \mathrm{LM}_{\prec}(I'R) \rangle_R)_d$.
    Since $\langle \mathrm{LM}_{\prec}(I'R) \rangle_R = \langle \mathrm{LM}_{\prec}(I') \rangle_R$, we finally obtain \eqref{eq:LMideal}.
\end{proof}

\begin{lemma}[cf.\ {\cite[Lemma 2]{KY}}]\label{lem:LM}
Under the same setting as in Lemma \ref{lem:LMideal}, for each degree $d<D$, we have
\begin{equation}\label{eq:LMD}
{\rm LM}_{\prec}(G)_{d} ={\rm LM}_{\prec}(G')_{d}.
\end{equation}
\end{lemma}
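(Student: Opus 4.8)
The plan is to deduce the equality of leading-monomial sets degree-by-degree from the equality of the leading-monomial \emph{ideals} already established in Lemma \ref{lem:LMideal}, together with the fact that both $G$ and $G'$ are \emph{reduced} Gr\"obner bases with respect to the same graded reverse lexicographic ordering $\prec$. Recall that for a reduced Gr\"obner basis the set of leading monomials $\mathrm{LM}_{\prec}(G)$ is precisely the set of \emph{minimal generators} of the monomial ideal $\langle \mathrm{LM}_{\prec}(I)\rangle_R$; likewise $\mathrm{LM}_{\prec}(G')$ is the minimal generating set of $\langle \mathrm{LM}_{\prec}(I')\rangle_R$. So the statement \eqref{eq:LMD} is really a comparison of minimal generators of two monomial ideals in a fixed degree $d<D$.

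First I would fix a degree $d<D$ and invoke Lemma \ref{lem:LMideal} not only in degree $d$ but in all degrees $\leq d$, i.e.\ $(\langle \mathrm{LM}_{\prec}(I)\rangle_R)_e = (\langle \mathrm{LM}_{\prec}(I')\rangle_R)_e$ for every $e\leq d$ (this holds since $d<D$ forces $e<D$). A minimal generator of $\langle \mathrm{LM}_{\prec}(I)\rangle_R$ of degree $d$ is a monomial $x^{\alpha}\in(\langle \mathrm{LM}_{\prec}(I)\rangle_R)_d$ that is not divisible by any generator of strictly smaller degree, equivalently $x^{\alpha}$ does not lie in $\mathfrak{m}\cdot\langle \mathrm{LM}_{\prec}(I)\rangle_R$ in degree $d$; and membership in that submodule is determined entirely by the homogeneous components of the ideal in degrees $\leq d$. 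Because both ideals agree in all those degrees, the notion ``minimal generator of degree $d$'' produces the same monomial set for both. Concretely, $x^{\alpha}\in\mathrm{LM}_{\prec}(G)_d$ iff $x^{\alpha}\in(\langle \mathrm{LM}_{\prec}(I)\rangle_R)_d$ but $x^{\alpha}\notin\sum_{i=1}^{n}x_i\cdot(\langle \mathrm{LM}_{\prec}(I)\rangle_R)_{d-1}$, and every ingredient on the right is, by Lemma \ref{lem:LMideal}, identical to its $I'$-counterpart.

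The cleanest way to carry this out is to observe that for a graded monomial ideal $J$, the degree-$d$ part of its minimal generating set depends only on the pair of vector spaces $J_d\supseteq(\mathfrak{m}J)_d=\sum_i x_i J_{d-1}$, namely it is a monomial basis of the quotient $J_d/(\mathfrak{m}J)_d$. I would therefore write
\[
\mathrm{LM}_{\prec}(G)_d \;=\; \big\{\,x^{\alpha}\in J_d \;:\; x^{\alpha}\notin (\mathfrak{m}J)_d \,\big\},\qquad J:=\langle \mathrm{LM}_{\prec}(I)\rangle_R,
\]
and the analogous formula with $J':=\langle \mathrm{LM}_{\prec}(I')\rangle_R$ for $\mathrm{LM}_{\prec}(G')_d$. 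Since $J_e=J'_e$ for all $e\leq d$ by Lemma \ref{lem:LMideal}, both $J_d=J'_d$ and $(\mathfrak{m}J)_d=(\mathfrak{m}J')_d$ hold, and the two monomial sets coincide, giving \eqref{eq:LMD}.

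The only real subtlety, and the step I would be most careful about, is the passage from equality of the truncated ideals to equality of minimal generators: one must confirm that no minimal generator of $\langle \mathrm{LM}_{\prec}(I)\rangle_R$ in degree exactly $d$ ``leaks'' information from degree $\geq D$ where the two ideals may differ. This is safe precisely because minimality in degree $d$ is detected by divisibility from generators of degree $<d$, all of which live in degrees where the ideals agree; the higher-degree behavior is irrelevant. I would state this explicitly, perhaps as the remark that for $x^{\alpha}$ of degree $d<D$ one has $x^{\alpha}\in(\mathfrak{m}J)_d$ iff $x^{\alpha}/x_i\in J_{d-1}$ for some $i$ with $x_i\mid x^{\alpha}$, and $J_{d-1}=J'_{d-1}$ by Lemma \ref{lem:LMideal}. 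This finishes the proof without any computation beyond the dimension/containment bookkeeping already supplied by the preceding lemma.
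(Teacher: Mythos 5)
Your proof is correct, but it is organized differently from the paper's. The paper proceeds by induction on the degree $d$: the base case $d=0$ is trivial, and in the inductive step a monomial $t=\mathrm{LM}_{\prec}(g)$, $g\in G$, of degree $d+1<D$ lies in $(\langle \mathrm{LM}_{\prec}(I')\rangle_R)_{d+1}$ by Lemma \ref{lem:LMideal}, hence is divisible by $\mathrm{LM}_{\prec}(g')$ for some $g'\in G'$; the induction hypothesis together with the reducedness of $G$ rules out $\deg(g')\leq d$ (otherwise $\mathrm{LM}_{\prec}(g')$ would already be a leading monomial of $G$ properly dividing $t$), forcing $\mathrm{LM}_{\prec}(g')=t$, and the reverse inclusion follows by symmetry. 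You instead avoid induction altogether by invoking the graded-Nakayama-type characterization: $\mathrm{LM}_{\prec}(G)_d$ is the set of monomials in $J_d\smallsetminus(\mathfrak{m}J)_d$ with $J=\langle \mathrm{LM}_{\prec}(I)\rangle_R$, and since $(\mathfrak{m}J)_d=\sum_i x_iJ_{d-1}$ involves only degrees $d$ and $d-1$, both of which are $<D$, Lemma \ref{lem:LMideal} immediately gives the equality. Your route is shorter and makes fully transparent \emph{why} the higher-degree discrepancy between the two ideals is irrelevant, at the cost of invoking (or proving) the minimal-generator characterization; the paper's induction is more elementary, using nothing beyond monomial divisibility and the definition of a reduced Gr\"obner basis. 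One small point that both your argument and the paper's pass over silently: $\mathrm{LM}_{\prec}(G')$ is a priori the minimal generating set of $\langle \mathrm{LM}_{\prec}(I')\rangle_{K[x_1,\ldots,x_{n-1}]}$, and one uses that extending a monomial ideal from $K[x_1,\ldots,x_{n-1}]$ to $R$ preserves its minimal monomial generators; this is standard and harmless, but since your argument leans on identifying $\mathrm{LM}_{\prec}(G')_d$ with the degree-$d$ minimal generators of $J':=\langle \mathrm{LM}_{\prec}(I')\rangle_R$, it would be worth one explicit sentence.
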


\begin{proof}
% The equality \eqref{eq:LMD} can be proved by the induction on $d$, as follows.
% The case $d=0$ is trivial.
% Suppose that \eqref{eq:LMD} holds for any $d$ with $0\leq d<D-1$.
{This can be proved just by translating the proof of \cite[Lemma 2]{KY}:
Replace $R'$, $R$, $\langle \bm{F}^h \rangle_{R'}$, $\langle \bm{F}^{\rm top} \rangle_R$, $G_{\rm hom}$, and $G_{\rm top}$ in the proof by $R$, $K[x_1,\ldots,x_{n-1}]$, $I$, $I'$, $G$, and $G'$ in our case, respectively.}
% Let $t$ be any monomial in ${\rm LM}_{\prec}(G)_{d+1}$, so that $t = \mathrm{LM}_{\prec}(g)$ for some $g\in G$.
% By $d+1<D$, it follows from Lemma \ref{lem:LMideal} that
% \[
% (\langle \mathrm{LM}_{\prec}(I) \rangle_{R})_{d+1} 
% = (\langle \mathrm{LM}_{\prec}(I') \rangle_{R})_{d+1},
% \]
% whence ${\rm LM}_{\prec}(g)$ is divided by ${\rm LM}_{\prec}(g')$ for some $g' \in G'$. 
% Since $G$ is reduced, the induction hypothesis implies that $\mathrm{deg}(g') = d+1$, and thus $\mathrm{LM}_{\prec}(g) = \mathrm{LM}_{\prec}(g')$.
% Therefore, we obtain ${\rm LM}_{\prec}(G)_{d+1}\subset {\rm LM}_{\prec}(G')_{d+1}$.
% The same argument shows that ${\rm LM}_{\prec}(G)_{d+1}\supset {\rm LM}_{\prec}(G')_{d+1}$, as desired.
\end{proof}

\begin{lemma}\label{lm:maxGB}
    Under the same setting as in Lemma \ref{lem:LMideal}, we also suppose that $R/(I+\langle x_n \rangle)$ is Artinian (equivalently $K[x_1,\ldots,x_{n-1}]/I'$ is Artinian), namely $D < \infty$.
    Furthermore, we assume that $\langle \mathrm{LM}_{\prec}(I)\rangle_R$ is weakly reverse lexicographic.
    Then we have $\mathrm{max.GB.deg}_{\prec}(I)  \geq  D$ and $\mathrm{max.GB.deg}_{\prec}(I') = D$.
    %, where $\prec$ denotes the restriction of $\prec'$ to $R$.
\end{lemma}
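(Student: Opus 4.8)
The plan is to exhibit a single explicit minimal generator that simultaneously settles both inequalities: the pure power $x_{n-1}^{D}$ of the grevlex-smallest variable of $S:=K[x_1,\ldots,x_{n-1}]$. Write $M:=\langle \mathrm{LM}_{\prec}(I)\rangle_R$ and $M':=\langle \mathrm{LM}_{\prec}(I')\rangle_S$, and recall two facts already available: the first paragraph of the proof of Lemma \ref{lem:LMideal} gives the inclusion $M'R\subseteq M$ (valid in every degree), while \eqref{eq:LMideal} gives $M_d=(M'R)_d$ for every $d<D$. Since $S/I'$ is Artinian with $d_{\rm reg}(I')=D$, we have $\mathrm{HF}_{S/M'}(a)=\mathrm{HF}_{S/I'}(a)>0$ for $a\le D-1$ and $=0$ for $a\ge D$; in particular every monomial of degree $\ge D$ already lies in $M'$, while each degree $\le D-1$ still carries a standard monomial.

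The easy half of (ii) is the upper bound $\mathrm{max.GB.deg}_{\prec}(I')\le D$: any monomial of degree $d>D$ is a proper multiple of a degree-$(d-1)\ge D$ monomial, which already belongs to $M'$, so it is not a minimal generator; hence every minimal generator of $M'$ has degree at most $D$.

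The heart of the argument is to show that $x_{n-1}^{D}$ is a minimal generator of $M'$ (and of $M$). As $x_{n-1}^{D}$ has degree $D$ it lies in $M'$, so the point is that its only one-variable divisor $x_{n-1}^{D-1}$ does not lie in $M$ (a fortiori not in $M'$, since $M'\subseteq M$). Suppose instead $x_{n-1}^{D-1}\in M$. Then some minimal generator $g$ of $M$ divides $x_{n-1}^{D-1}$, which forces $g=x_{n-1}^{a}$ with $1\le a\le D-1$. Now $x_{n-1}^{a}$ is the grevlex-smallest degree-$a$ monomial among those not involving $x_n$, so every other $x_n$-free degree-$a$ monomial is $\succ x_{n-1}^{a}$; the weakly reverse lexicographic hypothesis on $M$ places all of these in $M$, whence $S_a\subseteq M$. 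Because $a<D$, \eqref{eq:LMideal} gives $M_a=(M'R)_a$, whose $x_n$-free part is exactly $M'_a$; therefore $S_a\subseteq M'_a$, i.e.\ $\mathrm{HF}_{S/M'}(a)=0$, contradicting $a\le D-1$. Hence $x_{n-1}^{D-1}\notin M$, so $x_{n-1}^{D}$ is a minimal generator of $M'$ of degree $D$; together with the upper bound this yields $\mathrm{max.GB.deg}_{\prec}(I')=D$. Finally $x_{n-1}^{D}\in M'\subseteq M$ while $x_{n-1}^{D-1}\notin M$ shows $x_{n-1}^{D}$ is also a minimal generator of $M$, giving $\mathrm{max.GB.deg}_{\prec}(I)\ge D$.

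The main obstacle is precisely the step $x_{n-1}^{D-1}\notin M$. The weakly reverse lexicographic condition only controls the monomials lying \emph{above} a generator, and its leverage here comes from the grevlex-specific fact that a pure power of the smallest $S$-variable is minimal in its degree, so ``above it'' means ``every other $x_n$-free monomial of that degree''; Lemma \ref{lem:LMideal} is then exactly what converts the resulting inclusion $S_a\subseteq M$ into the numerical contradiction $\mathrm{HF}_{S/M'}(a)=0$. A subtlety to flag is that $M$ is weakly reverse lexicographic in $R$, where $x_n$ rather than $x_{n-1}$ is the smallest variable, so the filling condition only supplies the $x_n$-free monomials above $x_{n-1}^{a}$; this turns out to be exactly what is needed, and no information about $x_n$-involving monomials enters the argument.
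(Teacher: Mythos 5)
Your proof is correct. Its engine is the same as the paper's: if a pure power $x_{n-1}^{a}$ with $a<D$ were a minimal generator of $\langle\mathrm{LM}_{\prec}(I)\rangle_R$, then the weakly reverse lexicographic hypothesis, together with the grevlex fact that the $x_n$-free monomials of degree $a$ are exactly those $\succeq x_{n-1}^{a}$, would force $K[x_1,\ldots,x_{n-1}]_a\subseteq\langle\mathrm{LM}_{\prec}(I)\rangle_R$, and Lemma \ref{lem:LMideal} converts this into $\mathrm{HF}_{K[x_1,\ldots,x_{n-1}]/I'}(a)=0$, contradicting $a<D=d_{\rm reg}(I')$. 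The assembly, however, differs in three genuine ways, each of which buys something. First, the paper produces its pure power of $x_{n-1}$ from the assumption $\mathrm{max.GB.deg}_{\prec}(I)<D$ via the Noether-position criterion (Lemma \ref{lem:NPequi}) and derives a contradiction; you instead produce $x_{n-1}^{D}\in \langle\mathrm{LM}_{\prec}(I')\rangle\subseteq\langle\mathrm{LM}_{\prec}(I)\rangle_R$ directly from Artinian-ness (the Hilbert function of $K[x_1,\ldots,x_{n-1}]/I'$ vanishes in degrees $\geq D$) and show it is a \emph{minimal} generator of both leading-monomial ideals; this avoids Lemma \ref{lem:NPequi} altogether and yields the slightly stronger conclusion that both reduced Gr\"obner bases contain an element with leading monomial exactly $x_{n-1}^{D}$. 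Second, for the upper bound $\mathrm{max.GB.deg}_{\prec}(I')\leq D$ the paper invokes the Bayer--Stillman fact recorded in Remark \ref{rem:general}, whereas your observation that any monomial of degree $>D$ is a proper multiple of a monomial of degree $\geq D$ (hence not a minimal generator) is elementary and self-contained. Third, you use only the unconditional inclusion $\langle\mathrm{LM}_{\prec}(I')\rangle_R\subseteq\langle\mathrm{LM}_{\prec}(I)\rangle_R$ plus \eqref{eq:LMideal}, and do not need Lemma \ref{lem:LM} (equation \eqref{eq:LMD}) or the ``weak reverse lexicographicness up to degree $d$'' transfer that the paper performs for the $I'$ statement. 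One pedantic remark: when $D=1$ your range $1\leq a\leq D-1$ is empty, but the argument still closes, since $x_{n-1}^{D-1}=1\in\langle\mathrm{LM}_{\prec}(I)\rangle_R$ would contradict $I$ being proper.
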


\begin{proof}
     Put $d := \mathrm{max.GB.deg}_{\prec}(I)$.
     %and $D:=d_{\rm reg}(\langle I, x_n \rangle)=d_{\rm reg}(I')$.
     Assume for a contradiction that $ d <  D$.
     Since $\mathrm{Krull.dim}(R/(I+\langle x_n \rangle)) = 0$, it follows from Lemma \ref{lem:NPequi} that there is an element $g$ in \textcolor{black}{$G$} with ${\rm LM}_{\prec}(g)=x_{n-1}^{d'}$ for some $d'\leq d$.
     Then, any monomial $t\in R_{d'}$ with $t\succeq x_{n-1}^{d'}$ (namely $t$ is any monomial in $K[x_1,\ldots,x_{n-1}]_{d'}$) belongs to 
     {the initial ideal ${\rm in}_{\prec}(I)=\langle {\rm LM}_{\prec}(I)\rangle_{R}=\langle {\rm LM}_{\prec}(G)\rangle_{R}$} by the weakly reverse lexicographicness.
     Here, by $d<D$ together with \eqref{eq:LMD}, we have $\langle {\rm LM}_{\prec}(G)\rangle_{R}=\langle {\rm LM}_{\prec}(G')_{\leq d} \rangle_{R}$.
     This implies that $t$ also belongs to $\langle {\rm LM}_{\prec}(G')\rangle_{K[x_1,\ldots,x_{n-1}]}$, and hence $K[x_1,\ldots,x_{n-1}]_{d'}/I'_{d'}=0$.
     Thus, we have $d\geq d'\geq d_{\rm reg}(I')$, a contradiction.

     By the same argument as above, we can show that $\mathrm{max.GB.deg}_{\prec}(I') = d_{\rm reg}(I')$ as follows.
     Note that $\mathrm{max.GB.deg}_{\prec}(I') \leq d_{\rm reg}(I')$ follows from a general fact written in the second paragraph of Remark \ref{rem:general} below.
     Thus, we \textcolor{black}{suppose} to the contrary that $d:=\mathrm{max.GB.deg}_{\prec}(I') < D:=d_{\rm reg}(I')$. Then, it follows from \eqref{eq:LMD} that ${\rm LM}_{\prec}(G')={\rm LM}_{\prec}(G)_{\leq d}$\textcolor{black}{, and} 
     {the initial ideal ${\rm in}_{\prec}(\langle G'\rangle)=\langle {\rm LM}_{\prec}(G')\rangle_{K[x_1,\ldots,x_{n-1}]}$} 
     has the {\em weak reverse lexicographicness up to $d$}. 
     Since there is an element $g$ in $G'$ with ${\rm LM}_{\prec}(g)=x_{n-1}^{d'}$  for some $d'\leq d$, any monomial in $K[x_1,\ldots,x_{n-1}]_{d'}$ belongs to $\langle {\rm LM}_{\prec}(G')\rangle_{K[x_1,\ldots,x_{n-1}]}$ and so $K[x_1,\ldots,x_{n-1}]_{d'}/I'_{d'}=0$, which implies $d\geq d'\geq D$, a contradiction.
\end{proof}

\begin{remark}\label{rem:general}
    In Lemma \ref{lm:maxGB}, when $R/(I+\langle x_n \rangle)$ is Artinian (or equivalently $K[x_1,\ldots,x_{n-1}]/I'$ is Artinian), we can easily prove $\mathrm{max.GB.deg}_{\prec}(I') = D$ if 
    {${\rm in}_{\prec}(I')=\langle \mathrm{LM}_{\prec}(I' )\rangle_{K[x_1,\ldots,x_{n-1}]}$} 
    is a weakly reverse lexicographic ideal, assuming neither that $(f_1,\ldots,f_m,x_n)$ is cryptographic semi-regular nor that 
    {${\rm in}_{\prec}(I)=\langle \mathrm{LM}_{\prec}(I )\rangle_R$} is weakly reverse lexicographic.
    
    In general, for any homogeneous ideal $J\subset R$ with $\mathrm{Krull.dim}(R/J)=0$, we have $\mathrm{max.GB.deg}_{\prec}(J) \leq \mathrm{reg}(J) = d_{\rm reg}(J)$ by \cite[Corollary 2.5]{BS} (see also \cite[Proposition 4.15]{Hashemi2010}), and it is straightforward that the equality holds if 
    {the initial ideal $\langle \mathrm{LM}_{\prec}(J )\rangle_R$ of $J$} is a weakly reverse lexicographic.
\end{remark}

% \begin{lemma}
%     Assume that $R/(I+\langle x_n \rangle)$ is Artinian, and that $(f_1,\ldots,f_m,x_n)$ is a cryptographic semi-regular sequence.
%     We also suppose that $\langle \mathrm{LM}_{\prec}(I)\rangle_R$ is weakly reverse lexicographic.
%     Then we have
%     \[
%         \mathrm{max.GB.deg}_{\prec}(I) = \mathrm{max} \{ d_{\rm reg}(\langle I,x_n \rangle_R), \widetilde{d}_{\rm reg}(I) \}.
%     \]
% \end{lemma}

\noindent {\it Proof of {Theorem \ref{thm:WRL-GB}}:}
    Put $D := d_{\rm reg}(I+\langle x_n \rangle_R) = d_{\rm reg}(I')$ and $D' := \widetilde{d}_{\rm reg}(I)$.
    From our assumption that $R/(I+\langle x_n \rangle)$ is Artinian, we have the inequality \eqref{eq:HS}, i.e., $\mathrm{max.GB.deg}(I) \leq \mathrm{max}\{ D,D'\}$.
    {The case where $\mathrm{Krull.dim}(R/I) = 0$ is clear from Remark \ref{rem:general}, so we assume that $\mathrm{Krull.dim}(R/I) = 1$.}
    %Let $G$ be the reduced Gr\"{o}bner basis of $I$ with respect to the graded reverse lexicographic ordering $\prec$ on $R$ with $x_n \prec \cdots \prec x_1$.
    Note that, for the reduced Gr\"{o}bner basis $G$ of $I$, the elements of $\mathrm{LM}_{\prec}(G)$ are the minimal generators of the monomial ideal $\langle \mathrm{LM}_{\prec}(I) \rangle_R = \langle \mathrm{LM}_{\prec}(G) \rangle_R$.

    First we consider the case where $D' \ge D$, so that $\max \{ D,D'\} = D'$.
    Put $d := \mathrm{max.GB.deg}_{\prec} ( I ) {\leq D'}$.
    Then, it follows from {Macaulay's basis theorem} (cf.\ \cite[Theorem 1.5.7]{KR1}) that $R_d/ I_{d}$ has a $K$-linear basis of the form
    \[
    \{t\in R_d : \mbox{$t$ is a monomial and } {t\notin \langle {\rm LM}_{\prec}(I)\rangle_R = \langle \mathrm{LM}_{\prec}(G)\rangle_R }\},
    \]
    which is called the {\it standard monomial basis}.
    We set $r := \dim_K R_d/I_d$, and write this standard monomial basis as $\{ t_1, \ldots, t_r \}$ {with $t_1 \succ \cdots \succ t_r$}.
    In the following, we prove that $\{ t_1 x_n^{s} ,\ldots, t_r x_n^s\}$ is a $K$-linear basis of $R_{d+s}/I_{d+s}$ for any $s$ with $s \geq 1$, 
    from which we have $\dim_K R_d/I_d=\dim_K R_{d+s}/I_{d+s}$ for any positive integer $s$, so that $d\geq D'$ and therefore $d = D'$.

    By Lemma \ref{lm:maxGB}, we have $d \geq D$, and thus it \textcolor{black}{follows from Remark \ref{rem:surj}} that the multiplication-by-$x_n^s$ map from $R_d/I_d$ to $R_{d+s}/I_{d+s}$ is surjective (since $R/(I+\langle x_n \rangle)$ is Artinian).
    Therefore {the set} $B_s:=\{t_1x_n^s,\ldots,t_rx_n^s\}$ generates $R_{d+s}/I_{d+s}$ {over $K$}.
    Suppose to \textcolor{black}{the} contrary that $B_s$ is not a basis. 
    In this case, there exists an $i$ such that $t_i x_n^s$ is divisible by $\mathrm{LM}_{\prec}(g)$ for some $g \in G$.
    Indeed, since the elements of $B_s$ are linearly dependent over $K$, we obtain $f:=a_1 t_1 x_n^s + \cdots + a_r t_r x_n^s \in I_{d+s}$ for some $(a_1,\ldots,a_r) \in K^r \smallsetminus \{ (0,\ldots,0)\}$.
    Thus, for the least $i$ with $a_i \neq 0$, the leading monomial of $f$ is $t_i x_n^s$, which is divisible by $\mathrm{LM}_{\prec}(g)$ for some $g \in G$.
    Putting $u_i= \mathrm{GCD}(t_i,\mathrm{LM}_{\prec}(g))$ and $s_i=t_i/u_i$, we have $u_i s_i x_n^s = t_i x_n^s$.
    Since $t_i = s_i u_i$ is not divisible by $\mathrm{LM}_{\prec}(g)$ {and since $u_i= \mathrm{GCD}(t_i,\mathrm{LM}_{\prec}(g))$}, we can write $u_i x_n^{s'} = \mathrm{LM}_{\prec}(g)$ for some $s'\leq s$. 
    (We note that $\deg s_i=d-\deg u_i\geq \deg {\rm LM}_{\prec}(g)-\deg u_i=s'$.) 
    Note that $s' \geq 1$ since otherwise $t_i$ is divisible by $\mathrm{LM}_{\prec}(g)$.

    Take an arbitrary monomial $s'_i$ such that $\deg s'_i=s'$ and $s_i'$ divides $s_i$. 
    Then, {by ${\rm LM}_{\prec}(g)=u_ix_n^{s'} \preceq u_is'_i$ together with the weakly reverse lexicographicness of} {the initial ideal ${\rm in}_{\prec}(I)=\langle \mathrm{LM}_{\prec}(I) \rangle_R$,} the monomial $s'_iu_i$ should belong to $\langle {\rm LM}_{\prec}(G)\rangle_R$. 
    Moreover, since $s'_iu_i$ divides $t_i=s_iu_i$, the monomial $t_i=s_iu_i$ also belongs to $\langle {\rm LM}_{\prec}(G)\rangle_R$, which is a contradiction. 

    Finally, consider the remaining case, i.e., $D>D'$, so that $\max \{ D,D' \} =D$.
    In this case, it follows from Lemma \ref{lm:maxGB} that $d \geq D$, whence $d = D$.\qed

\section{{Results on genericness with conjectures}}\label{sec:generic}

{
Let $n$ be a positive integer, and let $R = K[x_1,\ldots,x_n]$ be the polynomial ring of $n$ variables over a field $K$.
We may assume that $K$ is infinite.
Let $m$, $d_1,\ldots, d_m$ be positive integers.
As in \textcolor{black}{Section} \ref{subsec:generic}, let $V_{n,m,(d_1,\ldots,d_m)} = R_{d_1} \times \cdots \times R_{d_m}$, and naturally endow it with the Zariski topology.
{Recall Definition \ref{def:generic} for the definition of the genericness of a property \textcolor{black}{for} $\bm{F}=(f_1,\ldots,f_m)\in V_{n,m,(d_1,\ldots,d_m)}$.}
{For $\bm{F}=(f_1,\ldots,f_m)\in V_{n,m,(d_1,\ldots,d_m)}$, we denote by $\langle \bm{F}\rangle_R$ (or by $\langle \bm{F} \rangle$ simply) the ideal generated by $f_1,\ldots,f_m$.}
% which is topologically identified with 
% %which is bijective to 
% the affine space \textcolor{red}{$\mathbb{A}^{N_1}(K)\times \dots \times \mathbb{A}^{N_m}(K) = \mathbb{A}^N(K)$ of 
% dimension $N:=\sum_{j=1}^mN_j $} with $N_j:= \binom{n+d_j-1}{d_j}$.
}

%=================================
\subsection{Restricted genericness and our conjectures}\label{subsec:rest_generic}
%=================================

{
For $n$, $m$, $(d_1,\ldots,d_m)$, and a non-negative integer $k$, we set
\begin{align*}
    \Lambda_{n,m,(d_1,\ldots,d_m)}^{{\rm dim}=k} &:= \{ \bm{F} \in V_{n,m,(d_1,\ldots,d_m)} : \mathrm{Krull.dim}(R/\langle \bm{F}\rangle)=k \},\\
    \Lambda_{n,m,(d_1,\ldots,d_m)}^{{\rm dim}\geq k} &:= \{ \bm{F}\in V_{n,m,(d_1,\ldots,d_m)} : \mathrm{Krull.dim}(R/\langle \bm{F}\rangle)\geq k \},\\
    \Lambda_{n,m,(d_1,\ldots,d_m)}^{\rm csr} &:= \{ {\bm{F}} \in V_{n,m,(d_1,\ldots,d_m)} : \mbox{$\bm{F}$ is cryptographic semi-regular} \},\\
    \Lambda_{n,m,(d_1,\ldots,d_m)}^{\rm gcsr} &:= \{ {\bm{F}} \in V_{n,m,(d_1,\ldots,d_m)} : \mbox{$\bm{F}$ is generalized cryptographic semi-regular} \}.
\end{align*}
%so that $\Lambda_{n,m,(d_1,\ldots,d_m)}^{\rm gcsr} \supset \Lambda_{n,m,(d_1,\ldots,d_m)}^{\rm gcsr,dim=1} \bigsqcup \Lambda_{n,m,(d_1,\ldots,d_m)}^{\rm gcsr,dim=0}$.
{It follows from Lemma \ref{lem:four} that $\Lambda_{n,m,(d_1,\ldots,d_m)}^{\rm gcsr} \cap \Lambda_{n,m,(d_1,\ldots,d_m)}^{{\rm dim}=0} \subset \Lambda_{n,m,(d_1,\ldots,d_m)}^{\rm csr}$ and the equality holds if and only if $m \geq n$}.}

Here, as a variant of Conjecture \ref{conj:Froberg}, we {start with raising} the following conjecture (cf.\ \cite[Conjecture 4.3.4]{KY24b} as its strict version):
% KY24b では、定数項がないものに対する conjecture ですので、参照も説明が必要かも知れません。
% generalized c.s.r.s の定義に c.s.r.s を含めているので、同値性が微妙でもあります。ここは重要と思います。
% 含めない strict なケースでどうなの、という議論が査読者から指摘されるかも知れません。

%\begin{conjecture}[cf.\ {\cite[Conjecture 4.3.4]{KY24b}}]\label{conj:ours}
\begin{conjecture}\label{conj:ours}
    Assume that $K$ is an infinite field.
    For {any} $n$, $m$, and $(d_1,\ldots,d_m)$, 
    %a generic sequence $\bm{F}=(f_1,\ldots,f_m)\in V$ is generalized cryptographic semi-regular.
the property that $\bm{F}=(f_1,\ldots,f_m)\in V_{n,m,(d_1,\ldots,d_m)}$ is generalized cryptographic semi-regular is generic.
\end{conjecture}
% とりあえず、generic sequence と先に書くと、Conjecture 3.3.3 の直後なので、Conjecture 3.3.3 の意味と勘違いされそう
% なので変更してます。

We see that Conjectures \ref{conj:Froberg} and \ref{conj:ours} are equivalent to each other.
For this, we first prove the following lemma:

\begin{lemma}\label{lem:Art}
    For {any $n$, $m$, and $(d_1,\ldots,d_m)$ with} $m \geq n$, the property that $R/\langle f_1,\ldots,f_m \rangle$ is Artinian is generic.
\end{lemma}

\begin{proof}
    {
    % Let $\Lambda^{{\rm dim}=0}_{n,m,(d_1,\ldots,d_m)}$ denote the set of sequences {$\bm{F} \in V_{n,m,(d_1,\ldots,d_m)}$} such that $R/\langle\bm{F} \rangle$ is Artinian.
    The case $m=n$ is well-known to hold:
    More strongly, $\Lambda^{{\rm dim}=0}_{n,n,(d_1,\ldots,d_n)}$ is a non-empty Zariski-open set of $V_{n,n,(d_1,\ldots,d_n)}$.}

    {
    Assume $m > n$, fix $n$, $m$, and $(d_1,\ldots,d_m)$.
    Then, the preimage of $\Lambda^{{\rm dim}=0}_{n,n,(d_1,\ldots,d_n)}$ by a natural projection sending an element of $V_{n,m,(d_1,\ldots,d_m)}$ to the first $n$-entries is included in $\Lambda^{{\rm dim}=0}_{n,m,(d_1,\ldots,d_m)}$.
    Since $\pi$ is continuous, the preimage is Zariski-open in $V_{n,m,(d_1,\ldots,d_m)}$, and it is clearly non-empty, whence the assertion holds.}
\end{proof}

% \Lambda^{\rm gcs}=\Lambda^{\rm cs} の可能性もありますので、
% \Lambda^{\rm gcs}\setminus \Lambda^{\rm cs}\not=\emptyset への言及も必要と思います。

\begin{proposition}\label{prop:generic}
    {Assume that $K$ is an infinite field.
    For $m \geq n$, Conjecture \ref{conj:ours} is equivalent to Fr\"{o}berg's conjecture (Conjecture \ref{conj:Froberg}).}
\end{proposition}

\begin{proof}
{Assume that there exists a non-empty Zariski-open set $U_1 \subset V_{n,m,(d_1,\ldots,d_m)}$ with $U_1 \subset \Lambda^{\rm gcsr}_{n,m,(d_1,\ldots,d_m)}$.
    By Lemma \ref{lem:Art}, there exists a non-empty Zariski-open set $U_2 \subset V_{n,m,(d_1,\ldots,d_m)}$ such that $U_2 \subset \Lambda^{{\rm dim}=0}_{n,m,(d_1,\ldots,d_m)}$.
    Since both $U_1$ and $U_2$ are dense, we have $\emptyset \neq U_1\cap U_2 \subset \Lambda^{\rm gcsr}_{n,m,(d_1,\ldots,d_m)} \cap \Lambda^{{\rm dim}=0}_{n,m,(d_1,\ldots,d_m)} \subset \Lambda^{\rm csr}_{n,m,(d_1,\ldots,d_m)}$, {whence Fr\"{o}berg's conjecture holds for $(n,m,d_1,\ldots,d_m)$}.}
    %Hence, $(f_1,\ldots,f_m) \in V$ is cryptographic semi-regular on the non-empty Zariski open set $U_1 \cap U_2$ of $V$, as desired.

    {
    The converse is obvious by $\Lambda^{\rm csr}_{n,m,(d_1,\ldots,d_m)} \subset \Lambda^{\rm gcsr}_{n,m,(d_1,\ldots,d_m)}$ {(cf.\ Lemma \ref{lem:four})}.}
\end{proof}
% \Lambda^{\rm gcs}=\Lambda^{\rm cs} の可能性もありますので、
% \Lambda^{\rm gcs}\setminus \Lambda^{\rm cs}\not=\emptyset への言及も
% 必要と思います。つまり、\Lambda^{\rm gcs}\cap (V\setminus \Lambda^{\rm cs}) と open set 
% に空でない共通部分が存在するか？が本質的です。これが示せないと次の corollary は成立しないと
% 思います。

Despite of the equivalence between two conjectures, 
those cannot guarantee {that {$\Lambda_{n,m,(d_1,\ldots,d_m)}^{\rm gcsr} \cap \Lambda_{n,m,(d_1,\ldots,d_m)}^{{\rm dim}\geq 1}$} includes a non-empty open set in the subspace {$\Lambda_{n,m,(d_1,\ldots,d_m)}^{{\rm dim}\geq 1}$} equipped with the induced topology.
To discuss this, we define the {\it restricted genericness} as follows:}
% the {\em genericness} 
% of $\Lambda^{\rm gscr}\cap (V\smallsetminus \Lambda^{\rm Art})$ 
% with respect to the induced topology of $V\smallsetminus \Lambda^{\rm Art}$. 
% Once the genericness holds for the case, we have the following. 

% \textcolor{red}{To be precise, we define the {\it restricted genericness} as follows:}

\begin{definition}[Restricted genericness]\label{def:restricted-generic}
    {Let $V'$ be a non-empty subset of $V_{n,m,(d_1,\ldots,d_m)}$.
    For given $n$, $m$, and $(d_1,\ldots,d_m)$, a property $\mathcal{P}$ for {$\bm{F}\in V_{n,m,(d_1,\ldots,d_m)}$} is said to be {{\it generic on $V'$} if} it holds on a non-empty open set of $V'$ with respect to the induced topology, namely there exists a non-empty Zariski open set $U$ in $V_{n,m,(d_1,\ldots,d_m)}$ such that {$\emptyset \neq U \cap V' \subset \{ \bm{F} \in V_{n,m,(d_1,\ldots,d_m)} : \mbox{$\bm{F}$ satisfies $\mathcal{P}$} \}$}.}
    % \textcolor{red}{Due to \cite[Theorem 1]{FL90}, If $K$ is an infinite field, this is equivalent to that the property $\mathcal{P}$ holds on $U_{\rm min}$.}
\end{definition}

{
Namely, it is important whether the property that $\bm{F} \in V_{n,m,(d_1,\ldots,d_m)}$ is generalized cryptographic semi-regular is {\it generic on} {$\Lambda_{n,m,(d_1,\ldots,d_m)}^{{\rm dim}\geq 1}$} or not.}
\textcolor{black}{We will also discuss the genericness on
\begin{equation}\label{eq:Zp}
    Z_{\bm{p}}:=\{ \bm{H} \in V_{n,m,(d_1,\ldots,d_m)} : \bm{p}\in V_{\overline{K}}(\bm{H}) \}
\end{equation}
for each point $\bm{p} \in \mathbb{P}^{n-1}(K)$.}

\begin{remark}\label{cor:generic}
%\begin{corollary}\label{cor:generic}
%Under an assumption that Conjecture \ref{conj:ours} is true, 
% If the subset $\Lambda^{\rm gcsr} \cap (V \smallsetminus \Lambda^{\rm Art})$ 
% contains a non-empty Zariski-open set in the subspace $V \smallsetminus \Lambda^{\rm Art}$ 
% %equipped with the relative topology.
% equipped with the induced topology, 
{If $K$ is an infinite field and if the property that $\bm{F} \in V_{n,m,(d_1,\ldots,d_m)}$ is generalized cryptographic semi-regular is {\it generic on} {$\Lambda_{n,m,(d_1,\ldots,d_m)}^{{\rm dim}\geq 1}$},}
%Therefore, 
then `almost' all sequences {$\bm{F} \in V_{n,m,(d_1,\ldots,d_m)}$ such that $\mathrm{Krull.dim}(R/\langle \bm{F} \rangle) \geq 1$} are expected to be generalized cryptographic semi-regular.
%\end{corollary}
\end{remark}

% \begin{example}
% We consider the non-empty Zariski closed set in $V$ defined by the condition that the $x_n^{d_i}$-coefficient of $f_i$ is zero for any $i$ with $1\leq i \leq m$, and denote it simply by $Z$ here.
% Then, for any element $\bm{F}=(f_1,\ldots,f_m) \in Z$, the homogeneous ideal $\langle f_1,\ldots,f_m \rangle_R$ has a projective root $(0:\cdots:0:1)$, whence $R/\langle f_1,\ldots,f_m \rangle$ is not Artinian.
% Namely, we have $Z \subset V \smallsetminus \Lambda^{\rm Art}$.
% If one finds an element $\bm{F}=(f_1,\ldots,f_m) \in Z$ that is generalized cryptographic semi-regular, then the neighborhood of $\bm{F}$ intersects $V \smallsetminus \Lambda$
% \end{example}
{
As the first discussion about the \textcolor{black}{restricted} {genericness} of {\textcolor{black}{the generalized cryptographic semi-regularity} on $ \Lambda_{n,m,(d_1,\ldots,d_m)}^{{\rm dim}\geq 1}$, 
we consider the following sub-case, 
where $\mathrm{Krull.dim}(R/\langle \bm{F} \rangle) = 1$ and $\dim_K R/\langle \bm{F}\rangle_{D}=1$ for $D=\widetilde{d}_{\rm reg}(\langle \bm{F} \rangle)<\infty$.
In this case, $\langle \bm{F}\rangle$ has \textcolor{black}{a} unique projective 
zero $\bm{p}$ in $\mathbb{P}^{n-1}(K)$}. 
%in $\Lambda^{\rm gcs}\cap (V\smallsetminus \Lambda^{\rm Art})$: 
%We consider 
First we show that 
it suffices to consider the case of the point $\bm{o}:=(0:\cdots:0:1)$ \textcolor{black}{as $\bm{p}$}, by the following lemma:
}

\begin{lemma}\label{lem:other_point}
    {If the property that $\bm{F}\in V_{n,m,(d_1,\ldots,d_m)}$ is generalized cryptographic semi-regular is generic on $Z_{\bm{o}}$, then it is also generic on {$Z_{\bm{p}}$ for any $\bm{p} \in \mathbb{P}^{n-1}(K)$}.}
\end{lemma}

\begin{proof}
    For each {$\bm{p}=(p_1 : \ldots : p_n) \in \mathbb{P}^{n-1}(K)$}, let $\phi_{\bm{p}}$ be an invertible projective linear transformation sending $\bm{p}$ to $\bm{o}$.
    We may assume {$p_n \neq 0$ and so $p_n=1$} 
    without loss of generality, so that we can take $\phi_{\bm{p}}$ as
    \[
%    (x_1:\cdots : x_n) \to (p_{n}x_1-p_1x_n:\cdots:p_{n}x_{n-1}-p_1x_{n-1}:p_{n}x_{n}).
{(x_1:\cdots : x_n) \to (x_1-p_1x_n:\cdots:x_{n-1}-p_1x_{n-1}:x_{n})}.
    \]
    Then, the induced polynomial map $\phi_{\bm{p}}^{\ast} : R \to R $ is an isomorphism substituting 
    {$x_i-p_ix_n$} 
    %$p_nx_i-p_ix_n$ 
    to $x_i$ in $f\in R$ for $1\leq i \leq n-1$ and stabilizing $x_n$.
%
%As the most important property, $\phi_{\bm{a}}^*$ does not change LM for any $h\in R_L[X]$ with respect to the graded reverse lexicographical 
%ordering $\prec$ such that $x_1\succ \cdots \succ x_n$. 
Here, we consider a map on $V_{n,m,(d_1,\ldots,d_m)}$ defined by
    \[
    \Phi_{\bm{p}} : V_{n,m,(d_1,\ldots,d_m)} \to V_{n,m,(d_1,\ldots,d_m)} \ ; \ (f_1,\ldots,f_m) \mapsto (\phi_{\bm{p}}^{\ast}(f_1),\ldots,\phi_{\bm{p}}^\ast(f_m)).
    \]
It is straightforward that this is an invertible linear (and hence homeomorphic) map on $V_{n,m,(d_1,\ldots,d_m)}$.
{Moreover, it is easily checked that 
$\phi_{\bm{p}}^*$ does not change LM for any $h\in R$ with respect to the 
graded reverse lexicographical ordering $\prec$ such that $x_1\succ \cdots \succ x_n$. 
Thus, for any subset $S$ of $R$, we have ${\rm LM}(S)={\rm LM}(\phi_{\bm{p}}^*(S))$, from which 
%$\bm{F}=(f_1,\ldots,f_m)$ in $V_{n,m,(d_1,\ldots,d_m)}$, 
${\rm in}_\prec(\langle \bm{F}\rangle)={\rm in}_\prec(\langle \Phi_{\bm{p}}(\bm{F})\rangle)$ 
for $\bm{F}$ in $V_{n,m,(d_1,\ldots,d_m)}$.
This also implies that ${\rm HS}_{R/\langle \bm{F}\rangle}(z)={\rm HS}_{R/{\rm in}(\langle \bm{F}\rangle)}(z)
={\rm HS}_{R/{\rm in}(\langle \Phi_{\bm{p}}(\bm{F})\rangle)}(z)=
{\rm HS}_{R/\langle \Phi_{\bm{p}}(\bm{F})\rangle}(z)$. 
}

From our assumption, we have a non-empty Zariski-open set $U_{\bm{o}}$ of $V_{n,m,(d_1,\ldots,d_m)}$ 
intersecting with $Z_{\bm{o}}$ such that $U_{\bm{o}} \cap Z_{\bm{o}} \subset \Lambda_{n,m,(d_1,\ldots,d_m)}^{\rm gcsr}\cap Z_{\bm{o}}$.
    Then $\Phi_{\bm{p}}(U_{\bm{o}})$ is a non-empty Zariski-open set of $V_{n,m,(d_1,\ldots,d_m)}$ intersecting with $\Phi_{\bm{p}}(Z_{\bm{o}})=Z_{\bm{p}}$.
{
Moreover, as any $\bm{F}\in U_{\bm{o}}\cap Z_{\bm{o}}$ is generalized \textcolor{black}{cryptographic} semi-regular, 
$\Phi_{\bm{p}}(\bm{F})$ is also generalized cryptographic semi-regular, 
{as %they induce the same Hilbert-Poinc\'are series. 
${\rm HS}_{R/\langle \bm{F}\rangle}(z)={\rm HS}_{R/\langle \Phi_{\bm{p}}(\bm{F})\rangle}(z)$}. 
Thus, letting $U_{\bm{p}}=\Phi_{\bm{p}}(U_{\bm{o}})$,  we have 
%such that 
$U_{\bm{p}} \cap Z_{\bm{p}} \subset \Lambda_{n,m,(d_1,\ldots,d_m)}^{\rm gcsr}\cap Z_{\bm{p}}$.}
\end{proof}

% Let $Z$ be the non-empty Zariski closed set in $V=V(n,m,d_1,\ldots,d_m)$ defined by the condition 
% that the $x_n^{d_i}$-coefficient of $f_i$ is zero for any $i$ with $1\leq i \leq m$. 
Note that
\[
Z_{\bm{o}} = \{ (f_1,\ldots,f_m) \in V_{n,m,(d_1,\ldots,d_m)} : \mbox{the $x_n^{d_j}$-coefficient of $f_j$ is zero ($1\leq \forall j \leq m$)}\}.
\]

\begin{definition}[Generic sequence for $Z_{\bm{o}}$]\label{df:genericsequence}
{
For checking the genericness of the generalized cryptographic semi-regularity {of a sequence} in $Z_{\bm{o}}$, we introduce a {\em generic sequence $(g_1,\ldots,g_m)$} \textcolor{black}{for $Z_{\bm{o}}$} with $\deg(g_j)=d_j$ as follows:
Let $\Omega$ be an extension field of $K$, and 
let $\mathcal{C}:= \{ c_{j,t} : 1\leq j\leq m,\ t \in \mathcal{T}_{d_j}\smallsetminus \{x_n^{d_j}\} \}$ be a 
set of elements in $\Omega$ that are algebraically independent over the prime field $K_0$ of $K$. 
%$\mathcal{C}:=\{c_{i,t}: 1\leq i\leq m, t\in \mathcal{T}_{d_i}\setminus \{x_n^{d_i}\}$ 
%be a set of algebraically independent elements over the prime field $K_0$ of $K$. 
Then we set $L:=K(\mathcal{C})$ and $L_0:=K_0(\mathcal{C})$. 
%and consider sequences in $L_0[X]^m \subset L[X]^m$. 
Putting $g_j=\sum_{t\in \mathcal{T}_{d_j}\smallsetminus \{x_n^{d_j}\}} c_{j,t}t$ for each $1\leq j \leq m$, 
we call a sequence $\bm{G}=(g_1,\ldots,d_m)$ in $K_0[\mathcal{C}][X]\subset L_0[X]\subset L[X]$ a {generic sequence} \textcolor{black}{for $Z_{\bm{o}}$}. 
We also set $\mathcal{F}_j:=\left\{\sum_{t\in \mathcal{T}_{d_j}\smallsetminus \{x_n^{d_j}\}} a_{j,t}t: a_{j,t}\in K\right\}$ for $1\leq j \leq m$, so that $Z_{\bm{o}}=\mathcal{F}_{d_1}\times \cdots \times \mathcal{F}_{d_m}$.}
%For simplicity, we denote $K(\mathcal{C})$ and $K_0[\mathcal{C}]$ by $L$ and $L_0$, 
%respectively. }
\end{definition}

%, and denote it simply by $Z$ here.
% Then, for any element $\bm{F}=(f_1,\ldots,f_m) \in Z$, the homogeneous ideal $\langle f_1,\ldots,f_m \rangle_R$ has a projective root $(0:\cdots:0:1)$, whence $R/\langle f_1,\ldots,f_m \rangle$ is not Artinian.
% Namely, we have $Z \subset V \smallsetminus \Lambda^{\rm Art}$.
\if 0
Here, we show that the genericness can be guaranteed when some instance exists. 
%Here, we state a generalization of Lemma \ref{lem:parametric-Hilbert}. 
To show it, we use the notion of parametric Gr\"obner basis under the following setting:
\begin{definition}[Setting for Restricted Genericness]\label{df:setting}
%[Setting for Patametric Discussion]\label{df:setting}
%\begin{lemma}\label{lem:parametric-Hilbert:const}
\textcolor{red}{
Let $\mathcal{T}$ be the set of all monomials in $X=\{x_1,\ldots,x_n\}$ 
and $\mathcal{T}_d$ its subset consisting of all monomials of degree $d$. 
(Those sets are identified to sets of their index vectors. )
Then $\# \mathcal{T}_d=\binom{n}{d}$ which we denoted by $N_d$. 
We also let $\mathcal{A}_d:=K^{N_d}$, where %be indexed by $\mathcal{T}_d$, that is, 
each of element $\bm{a}$ is expressed by using $\mathcal{T}_d$, 
that is, $\bm{a}=(a_{t_1},\ldots,a_{t_{N_d}})$ for $\mathcal{T}_d=\{t_1,\ldots,t_{N_d}\}$. 
%$\mathcal{A}=\{\bm{a}\in K^{N_d}: \bm{a}\delta: x^\delta\in \mathcal{T}\}$ 
%and $\mathcal{A}_d$=\{\delta: x^\delta\in \mathcal{T}_d\}$, respectively.  
$\mathcal{A}_d$ is identified naturally to $R_d$ by 
$\mathcal{A}_d\in \bm{a} \mapsto \sum_{t\in \mathcal{T}_d} a_{t}t$. 
Thus, $V_{n,m,(d_1,\ldots,d_m)}$ is also identified to $\mathcal{A}_{(d_1,\cdots,d_m)}:=\prod_{i=1}^m \mathcal{A}_{d_i}\cong K^{N}$, where 
$N:=\sum_{i=1}^m N_{d_i}$. 
In order to deal with restricted genericness, 
we introduce a notion of {\em subset with generic polynomial}.
If a non-empty subset $\mathcal{B}_d$ of $\mathcal{A}_{d}$ 
is defined as follows, we call $\mathcal{B}_d$ a {subset with 
generic polynomial}. 
%"generic polynomial" $g_{\mathcal{B}_d}$ as follows: 
%and define 
%$\mathcal{F}(\mathcal{B}_d):=\{\sum_{t\in \mathcal{T}_d} a_t t : (a_t)_{t\in \mathcal{T}_d} \in \mathcal{B}_d\}$:
}

\textcolor{red}{
We divide $\mathcal{T}_d$ into two subset $\mathcal{S}(=\mathcal{S}(\mathcal{B}_d))$ 
and $\mathcal{R}(=\mathcal{R}(\mathcal{B}_d))$. 
For each $\bm{a}=(a_t)_{t\in \mathcal{T}_d}\in \mathcal{A}_d$, 
let $\bm{a}_S:=(a_t)_{t\in \mathcal{S}}$ %which is called the $\mathcal{S}_1$ part of $\bm{a}$ 
and $\bm{a}_R:=(a_t)_{t\in \mathcal{R}}$, respectively. %which is called the $\mathcal{S}_2$ part of $\bm{a}$. 
Let $Y:=\{y_t: t\in \mathcal{S}\}$ be a set of variables indexed by $\mathcal{S}$. 
Also, for each $t\in \mathcal{R}$, we fix a polynomial $b_t$ over $K$ in $Y$, indexed by $\mathcal{R}$. 
Then, 
\[
\mathcal{B}_d:=\{\bm{a}: a_t \in K \mbox{ for }t\in \mathcal{S}, 
a_t=b_t(\bm{a}_S) \mbox{ for }t\in \mathcal{R}\}. 
\]
Thus, letting $N'=\# \mathcal{S}$, we have $\mathcal{B}_d\cong K^{N'}$.
We also define its corresponding subst $\mathcal{F}(\mathcal{B}_d)$ by 
\[
\mathcal{F}(\mathcal{B}_d):=\left\{\sum_{t\in \mathcal{T}_d} a_t t : t \in \mathcal{B}_d\right\}.
\]
%For each $j$ with $1\leq j \leq m$, we %choose 
%we consider a non-empty subset $\mathcal{B}_j$ %of $\{ \delta \in \mathbb{Z}_{\geq 0}^n : |\delta|=d_j \}$, 
%of $\mathcal{A}_{d_j}$ and define $\mathcal{F}_j:=\{ \sum_{t\in \mathcal{T}_{d_j}} a_{t} t : (a_t)_{t\in \mathcal{T}_{d_j}}\in \mathcal{B}_{j}\} 
%\subset R_{d_j}$.
}

\textcolor{red}{
Let $\Omega$ be an extension field of $K$, and 
let $\mathcal{C}(\mathcal{B}_d)= \{ c_t : t \in \mathcal{S}(\mathcal{B}_d)\}$ be a 
set of elements in $\Omega$ that are algebraically independent over $K$. 
Here we call $\mathcal{C}(\mathcal{B}_d)$ the {\em set of parameters} for $\mathcal{B}_d$. 
(Thus, we may consider $\mathcal{C}(\mathcal{B}_d)$ a set of variables. )
Then, the generic polynomial $g_{\mathcal{B}_d}$ of $\mathcal{B}_d$ is defined as 
\[
g_{\mathcal{B}_d}=\sum_{t\in \mathcal{S}}c_t t +
\sum_{t\in \mathcal{R}} b_t(\mathcal{C}(\mathcal{B}_d))t \in K[\mathcal{C}(\mathcal{B}_d)].
\]
%We also let $L:=K(\mathcal{C})$ and $R_L := R \otimes_K L = L[x_1,\ldots,x_n]$, respectively. 
For a subset $\mathcal{B}_{(d_1,\ldots,d_m)}:=\prod_{i=1}^m\mathcal{B}_{d_i}$ 
in $\mathcal{A}_{(d_1,\ldots,d_m)}$, %$\prod_{i=1}^m \mathcal{A}_{d_i}$, 
we define its corresponding set of $(d_1,\ldots,d_m)$ sequences by 
$\prod_{i=1}^m \mathcal{F}(\mathcal{B}_{d_i})$. 
Moreover, letting $\mathcal{C}_i=\{c_{i,t}: t\in \mathcal{S}(\mathcal{B}_{d_i})\}$ 
which is obtained by renaming $c_t$ with $c_{i,t}$, 
we define its generic sequence $(g_1,\ldots,g_m)$ 
by $g_i=g_{\mathcal{B}_{d_i}}({C}_i)$ for each $i$. 
In this case, the set of parameters is 
$\cup_{i=1}^m C_i=\{c_{i,t}: 1\leq i\leq m, t\in \mathcal{B}_{d_i}\}$. 
}
%$R_{L_0} := K_0[X] \otimes_{K_0} L_0 = L_0[x_1,\ldots,x_n]$, respectively. 
%set of elements in $\Omega$ that are algebraically independent over the prime field $K_0$ of $K$. 
%let $\mathcal{C} = \{ c_{j,t} : 1 \leq j \leq m,\ t \in \mathcal{T}_{d_j} \}$ be a 
%set of elements in $\Omega$ that are algebraically independent over the prime field $K_0$ of $K$. 
%(Then $\# \mathcal{C}=N$.) 
%Let $L:=K(\mathcal{C})$, $L_0:=K_0(\mathcal{C})$, $R_L := R \otimes_K L = L[x_1,\ldots,x_n]$, 
%$R_{L_0} := K_0[X] \otimes_{K_0} L_0 = L_0[x_1,\ldots,x_n]$, respectively. 
  %  For $K_0(\mathcal{C})[X]$, 
%  Since all elements in $\mathcal{C}$ are algebraically indenpendent over $K_0$, 
%    $\mathcal{C}$ can be treated as the set of parameters (invariants) in $K_0[\mathcal{C}][X] \subset R_{L_0}$. 
%    We also put $g_j:= \sum_{t \in \mathcal{T}_{d_j}} c_{j,t}t \in (R_{L_0})_{d_j}\subset (R_L)_{d_j}$ 
%    for each $1\leq j \leq m$.}
%$g_j:= \sum_{\alpha \in \mathcal{A}_j} c_{j,\alpha}x^{\alpha}\in (R_L)_{d_j}$ for each $1\leq j \leq m$.}
\end{definition}
%m{G}=(g_1,\ldots,g_m)$ is a generic sequence.
%     % For each $1\leq j \leq m$, put $g_j:= \sum_{|\alpha|=d_j} c_{j,\alpha}x^{\alpha}\in R_{d_j}$.
%     Let $\bm{G}=(g_1,\ldots,g_m)\in V_{n,m,(d_1,\ldots,d_m)}$ be a generic sequence.

\begin{remark}
\textcolor{red}{
%For each $\bm{a}\in K^n$ (or $\bm{a}\in \mathbf{P}^{n-1}(K)$), 
The subset of $\mathcal{A}_{(d_1,\ldots,d_m)}$ corresponding to $Z_{\bm{0}}$ is a subset with generic polynomial. 
In this case, we set $B_{d}$ by 
$\mathcal{S}(\mathcal{B}_{d})=\mathcal{T}_{d}\setminus \{x_n^{d}\}$, $\mathcal{R}(\mathcal{B}_{d})=\{x_n^{d}\}$ 
and $b_{x_n^{d}}=0$.  The generic polynomial $g_{\mathcal{B}_d}$ is 
$\sum_{t\in \mathcal{T}_{d}\setminus\{x_n^{d}\}} c_t t$. 
For each $\bm{a}$ in $K^n$, we can show that 
the subset corresponding to $Z_{\bm{a}}$ is also a subset with generic polynomial. 
%For this case, we set $B_{d}$ by 
%$\mathcal{S}(\mathcal{B}_{d})=\mathcal{T}_{d}\setminus \{x_n^{d}\}$, $\mathcal{R}(\mathcal{B}_{d})=\{x_n^{d}\}$ 
%and the polynomial $b_{x_n^{d}}=g(\bm{a})$ and the generic polynomial is $g(\mathcal{C})+$. 
%$\sum_{t\in \mathcal{T}_{d}\setminus\{x_n^{d}} c_t t$. 
}
\end{remark}
\fi

{Then we state \textcolor{black}{an analogue} of Lemma \ref{lem:parametric-Hilbert}:}
%, which can be proved similarly to Lemma \ref{lem:parametric-Hilbert}, see \cite[Lemma 1]{Froberg} for a proof by Fr\"{o}berg.
\begin{lemma}\label{lem:parametric-Hilbert:const}
%Suppose that $\mathcal{B}_{d}$ is a subset with generic polynomial for each positive integer $d$.  
    For any $\bm{F}=(f_1,\ldots,f_m)\in %\mathcal{F}_1\times \cdots \times \mathcal{F}_m)=
    Z_{\bm{o}}
    %$\mathcal{F}_1\times \cdots \times \mathcal{F}_m 
    \subset V_{n,m,(d_1,\ldots,d_m)}$, we have the coefficient-wise inequality
    \begin{equation}
        \mathrm{HS}_{R/\langle f_1,\ldots,f_m\rangle}(z) = \mathrm{HS}_{R_L/\langle f_1,\ldots, f_m \rangle_{R_L}} \geq \mathrm{HS}_{R_L/\langle g_1,\ldots,g_m \rangle_{R_L}}(z),
    \end{equation}
{where $(g_1,\ldots,g_m)$ is the generic sequence \textcolor{black}{for $Z_{\bm{o}}$} defined in Definition \ref{df:genericsequence}, and where $R_L=L[X]$ with $L=K(\mathcal{C})$}. 
\end{lemma}

\begin{proof}
   {This can be proved similarly to Lemma \ref{lem:parametric-Hilbert}, see \cite[Lemma 1]{Froberg} for a proof by Fr\"{o}berg.}
\end{proof}

\begin{proposition}\label{prop:generic_equivalence2}
{
    Assume that $K$ is an infinite field.
    Then, for given $n$, $m$, and $(d_1,\ldots,d_m)$, the following conditions (1), (2), and (3) are equivalent to each other:
    \begin{enumerate}
        \item[(1)] There exists a generalized cryptographic semi-regular sequence $\bm{F}\in Z_{\bm{o}}$ \textcolor{black}{with $D:=\widetilde{d}_{\rm reg}(\langle \bm{F} \rangle)<{\infty}$} such that $\dim_K (R/\langle \bm{F}\rangle)_D=1$.
        %(in other words, the ideal $\langle \bm{F}\rangle$ is sufficiently generic).
        % \item[(2)] For the non-empty Zariski open set $U_{\rm min}$ defined in the paragraph just after Remark \ref{rem:Diem}, any $\bm{F} =(f_1,\ldots,f_m)\in U_{\rm min}$ satisfies \eqref{eq:semiregHil2}.
        \item[(2)] The property that $\bm{F}\in Z_{\bm{o}}$ is generalized cryptographic semi-regular \textcolor{black}{with $D=\widetilde{d}_{\rm reg}(\langle \bm{F} \rangle)<{\infty}$ satisfying $\dim_K (R/\langle \bm{F}\rangle)_D=1$} is generic on $Z_{\bm{o}}$ (in terms of Definition \ref{def:restricted-generic}).
        \item[(3)] For $\mathcal{C}$, $L$, $R_L$, and $\bm{G}=(g_1,\ldots,g_m)$ as in Definition \ref{df:genericsequence}, we have
        \begin{equation}\label{eq:Hil-generic2}
                    \mathrm{HS}_{R_L/\langle g_1,\ldots,g_m \rangle}(z) \equiv \frac{\prod_{j=1}^m(1-z^{d_j})}{(1- z)^n} \pmod{z^{D'}}
        \end{equation}
        and $\dim_L (R_L/\langle \bm{G}\rangle)_{D'}=1$ for $D'=\widetilde{d}_{\rm reg}(\langle \bm{G}\rangle)<{\infty}$.
        % Namely, the generic sequence $\bm{G}$ is cryptographic semi-regular (in other words, the ideal $\langle \bm{G} \rangle_{R_L}$ is sufficiently generic).
    \end{enumerate}}
\end{proposition}

\begin{proof}
    {
    The implication (2) $\Rightarrow$ (1) is obvious.}

    {
    To see (1) $\Rightarrow$ (3), by Lemmas \ref{lem:Hil_lex} and \ref{lem:parametric-Hilbert:const}, we have the same inequalities as in \eqref{eq:ineq}.
    Since $\bm{F}$ is generalized cryptographic semi-regular (i.e., $\widetilde{d}_{\rm reg}(\langle \bm{F}\rangle)$-regular), $\bm{G}$ is also $\widetilde{d}_{\rm reg}(\langle \bm{F}\rangle)$-regular.
    Moreover, since $\mathrm{Krull.dim}(R_L/\langle \bm{G}\rangle) \geq 1$, the inequality $1=\mathrm{HF}_{R/\langle \bm{F}\rangle}(d) \geq \mathrm{HF}_{R_L/\langle \bm{G}\rangle}(d)$ becomes an equality for any $d \geq D$, so that $\mathrm{HS}_{R/\langle \bm{F}\rangle}(z)=\mathrm{HS}_{R_L/\langle \bm{G} \rangle}(z)$ and $\widetilde{d}_{\rm reg}(\langle \bm{G}\rangle)=D <{\infty}$.}

    {The implication (3) $\Rightarrow$ (2) follows from Theorem \ref{thm:parametric}.}
    % \textcolor{red}{The implication (3) $\Rightarrow$ (2) follows from the irreducibility of $V$.}
\end{proof}

\if 0
\begin{lemma}\label{lm:Z}
    Let $\bm{G} = (g_1,\ldots, g_m) \in Z$, and assume that the other coefficients of $g_1,\ldots,g_m$ are 
    %algebraically independent over $K$.
    algebraically independent over the prime field of $K$.
    Then, for any $\bm{F} = (f_1,\ldots, f_m) \in Z$, we have the coefficient-wise inequality
    \[
    {\rm HS}_{R/\langle f_1,\ldots,f_m \rangle}(z) \geq {\rm HS}_{R/\langle g_1,\ldots,g_m \rangle}(z).
    \]
    Therefore, if there exists a generalized cryptographic semi-regular sequence 
    $\bm{F}=(f_1,\ldots,f_m)$ in $Z$ with $\dim_K (R/\langle f_1,\ldots,f_m\rangle)_D=1$ for $D=\widetilde{d}_{\rm reg}(\langle f_1,\ldots,f_m\rangle_R)$, 
    then $\bm{G}=(g_1,\ldots,g_m)$ is generalized cryptographic semi-regular, i.e.,
    \[
    {\rm HS}_{R/\langle g_1,\ldots,g_m \rangle}(z) \equiv \frac{\prod_{j=1}^{m}(1-z^{d_j})}{(1- z)^n} \pmod{z^{D'}}
    \]
    for $D' =\widetilde{d}_{\rm reg}(\langle g_1,\ldots,g_m\rangle_R)$, 
    and $\dim_K (R/\langle g_1,\ldots,g_m\rangle)_{D'}=1$. 
%  whence $Z\cap \Lambda^{\rm gcs}$ contains a non-empty Zariski open set of $Z$. 
    %, whence Conjecture \ref{conj:ours} is true.
\end{lemma}

\begin{proof}
The first half can be proved in a way similar to the proof of \cite[Lemma 1]{Froberg}. 
To prove the second one, we also use Lemma \ref{lem:Hil_lex} (cf.\ \cite{Froberg}) for 
a sequence $\bm{H}=(h_1,\ldots,h_m)$ of generic forms with the same parameters $n,m,d_1,\ldots,d_m$. 
Then, we have 
${\rm HS}_{R/\langle \bm{G}\rangle}\geq {\rm HS}_{R/\langle \bm{H}\rangle}$ by \cite[Lemma 1]{Froberg} and 
thus, 
\begin{equation}\label{eq:gcsrs1}
{\rm HS}_{R/\langle f_1,\ldots,f_m\rangle}\geq 
{\rm HS}_{R/\langle g_1,\ldots,g_m\rangle}\geq 
{\rm HS}_{R/\langle h_1,\ldots,h_m\rangle} \geq^{\rm lex} \left[\frac{\prod_{i=1}^m (1-z^{d_i})}{(1-z)^n}\right], 
\end{equation}
where $\geq^{\rm lex}$ denotes the lexicographical inequality. 
As $\bm{F}$ %=\{f_1,\ldots,f_m\}$ 
is a generalized cryptographic semi-regular sequence, we also have 
\begin{equation}\label{eq:gcsrs2}
{\rm HS}_{R/\langle f_1,\ldots,f_m\rangle}\equiv \frac{\prod_{j=1}^{m}(1-z^{d_j})}{(1- z)^n} \pmod{z^D}
\end{equation}
for $D=\widetilde{d}_{\rm reg}(\langle f_1,\ldots,f_m\rangle_R)$. 
Combining \eqref{eq:gcsrs1} and \eqref{eq:gcsrs2}, 
we have 
\[
{\rm HS}_{R/\langle f_1,\ldots,f_m\rangle}\equiv 
{\rm HS}_{R/\langle g_1,\ldots,g_m \rangle} 
\equiv 
\frac{\prod_{j=1}^{m}(1-z^{d_j})}{(1- z)^n} \pmod{z^D}. 
\]

Meanwhile, it follows from the condition $\dim_K (R/\langle f_1,\ldots,f_m\rangle)_D=1$ that we obtain $\dim_K (R/\langle g_1,\ldots,g_m\rangle)_D=1$ and 
$\widetilde{d}_{\rm reg}(\langle g_1,\ldots,g_m\rangle_R)\leq D$. 
Hence we have 
\[
{\rm HS}_{R/\langle g_1,\ldots,g_m\rangle}\equiv \frac{\prod_{j=1}^{m}(1-z^{d_j})}{(1- z)^n} \pmod{z^{D'}}
\]
for $D'=\widetilde{d}_{\rm reg}(\langle g_1,\ldots,g_m\rangle_R)$. 
\end{proof}
\fi

{
From Proposition \ref{prop:generic_equivalence2} and Lemma \ref{lem:other_point}, if there exists a generalized cryptographic semi-regular sequence $\bm{F}\in V_{n,m,(d_1,\ldots,d_m)}$ with $\dim_K(R/\langle \bm{F}\rangle)_D=1$ for $D=\widetilde{d}_{\rm reg}(\langle \bm{F} \rangle)<{\infty}$, it follows that $Z_{\bm{p}}\cap \Lambda^{\rm gcsr}_{n,m,(d_1,\ldots,d_m)}$ includes a non-empty Zariski-open set in $Z_{\bm{p}}$ equipped with the induced topology for any $\bm{p}\in \mathbb{P}^{n-1}(K)$, which implies the following conjecture:}

\begin{conjecture}[cf.\ {\cite[Conjecture 4.3.4]{KY24b}}]\label{conj:A}
    {Assume that $K$ is an infinite field.
    For {any} $n$, $m$, and $(d_1,\ldots,d_m)$ and for any $\bm{p}\in \mathbb{P}^{n-1}(K)$, the property that $\bm{F}\in V_{n,m,(d_1,\ldots,d_m)}$ is generalized cryptographic semi-regular is generic on $Z_{\bm{p}}$.}
\end{conjecture}

{Let us raise a more general conjecture:}

\begin{conjecture}\label{conj:B}
    {Assume that $K$ is an infinite field.
    For {any} $n$, $m$, and $(d_1,\ldots,d_m)$, the property that $\bm{F}\in V_{n,m,(d_1,\ldots,d_m)}$ is generalized cryptographic semi-regular is generic on $V_{n,m,(d_1,\ldots,d_m)}^{\dim \geq 1}$.}
\end{conjecture}

% is true for the parameter $(n,m,d_1,\ldots,d_m)$.}

%\begin{remark}
%For a generic generalized cryptographic semi-regular sequence $\bm{G}$, 
%the additional condition $\dim_K (R/\langle \bm{G}\rangle=1$ 
%\end{remark}

%=============================================
%\subsection{Criterion to examine conjectures}
\subsection{Genericness on \textcolor{black}{cryptographic} semi-regularity and 
weakly reverse lexicographicness}
%=============================================

In this subsection, let $K$ be an infinite field.
Here we give a proof of Theorem \ref{th:semi+wrl} and Theorem \ref{th:gsemi+wrl} using the notion of ``parametric Gr\"obner basis'' for the ideal generated by a ``generic sequence'' $\bm{G}=(g_1,\ldots,g_m)$ under the setting in Section \ref{subsec:generic} and Definition \ref{df:genericsequence}. 
We fix a monomial ordering {$\prec$} on {the set $\mathcal{T}$ of monomials in $X=\{x_1,\ldots,x_n\}$} 
as the {graded reverse lexicographical ordering} with $x_1\succ x_2\succ \cdots \succ x_n$. 
As additional notations, {we simply write ${\rm LM(*)}$ and ${\rm in(*)}$ for ${\rm LM}_{\prec}(*)$ and ${\rm in}_{\prec}(*)$, respectively,} and {we} define the following:

For each element \textcolor{black}{$\bm{a}\in\mathcal{A}_{(d_1,\ldots,d_m)}:=\mathbb{A}^N(K)=\prod_{j=1}^m \mathbb{A}^{N_j}(K)$ with $N_j=\binom{n+d_j-1}{d_j}$}, we define a map $\pi_{\bm{a}}$ from $K[\mathcal{C}][X]$ to $K[X]$ by substituting each $c_{j,t}$ with $\bm{a}_{j,t}$. 
%It is naturally extended to a map $\pi_{\bm{a}}$ from $K[\mathcal{C}][X]_d$ to $K[X]$ as follows:
%For each $h=\sum_{t\in \mathcal{T}_d} b_t t$ in $K[\mathcal{C}[X]$, where $b_t\in K[\mathcal{C}]$, 
%$\pi_{\bm{a}}(h)$ is defined as $\sum_{t\in \mathcal{T}_d} \pi_{\bm{a}}(b_t)t$ in $K[X]$.
%by substituting each $c_{t}$ with $\bm{a}_{t}$ for $\sum_{t\in \mathcal{T}_d}c$. 
%We note $\mathcal{A}_{(d_1,\ldots,d_m)}\cong K^N$ with $N:=\sum_{j=1}^m\binom{n+d_j-1}{d_j}$.
%$\# A_d=\binom{n+d-1}{d}, =K^N$ \textcolor{red}{with $N:=\sum_{j=1}^m\binom{n+d_j-1}{d_j}$},
The map $\pi_{\bm{a}}$ is generalized to a map from $K[\mathcal{C}][X]^m$ to $K[X]^m$, for which we use the same symbol.
{Thus, for a sequence $\bm{F}$ in $V_{n,m,(d_1,\ldots,d_m)}$, there exists $\bm{a}$ in $\mathcal{A}$ uniquely 
such that $\bm{F}=\pi_{\bm{a}}(\bm{G})$, \textcolor{black}{where $\bm{G}$ is a generic sequence in terms of Fr\"{o}berg, see the second paragraph of Section \ref{subsec:generic} for the definition}.
We also note that, for a polynomial $h$ in $L[X]$ {with $L=K(\mathcal{C})$}, if $\pi_{\bm{a}}$ does not annihilate any of {the} denominators of {its} coefficients, $\pi_{\bm{a}}(h)$ can be defined.}
%Thus, for a sequence $\bm{F}$ in $V_{n,m,(d_1,\ldots,d_m)}$, there exists $\bm{a}$ in $\mathcal{A}$ uniquely 
%such that $\bm{F}=\pi_{\bm{a}}(\bm{G})$. 
%
Then, for each {sequence $\bm{F}$ in $V_{n,m,(d_1,\ldots,d_m)}$}, let ${\rm GB}(\bm{F})$ denote the reduced Gr\"obner basis of the ideal $\langle \bm{F}\rangle_R$ with respect to $\prec$.  
Moreover, for the generic sequence $\bm{G}$, let ${\rm GB}(\bm{G})$ denote the reduced Gr\"obner basis of the ideal $\langle \bm{G}\rangle_{L_0[X]}$ with respect to $\prec$. 
Then, {it follows from} $\bm{G}\subset L_0[X]\subset L[X]$ {that} ${\rm GB}(\bm{G})$ is also the reduced Gr\"obner basis of the ideal 
$\langle \bm{G}\rangle_{L[X]}$ with respect to $\prec$.
{Therefore, in the following, for deciding the shape of ${\rm GB}(\bm{G})$, 
we assume that all elements in $\mathcal{C}$ are algebraically independent over $K$. 
(See Remark \ref{rem:Diem}.)}
{Recall that} ${\rm LM}({\rm GB}(\bm{F}))$ forms the {minimal} generating set of 
the initial ideal {${\rm in}(\langle \bm{F}\rangle_R):=\langle {\rm LM}(\langle \bm{F}\rangle_R)\rangle_R$}. 
We simply write $\langle \bm{G}\rangle$ and $\langle \bm{F}\rangle$ 
for $\langle \bm{G}\rangle_{L[X]}$ and $\langle \bm{F}\rangle_{K[X]}$, 
respectively. 
%coincides with $\langle {\rm LM}({\rm GB}(\bm{F}))\rangle_R$. 
%\textcolor{blue}{$L=K(\mathcal{C})$？あと$L_0$未定義？ $\Leftarrow$ Definition \ref{df:genericsequence} で定義しました。}

\begin{theorem}\label{thm:semi+wrl}
If there exists a cryptographic semi-regular sequence $\bm{F}$ in \\
$V_{n,m,(d_1,\ldots,d_m)}$, that is, $\bm{F}=\pi_{\bm{a}}(\bm{G})$ 
{for some element $\bm{a}\in \mathcal{A}_{(d_1,\ldots,d_m)}$,}
%$=K^N$ \textcolor{red}{with $N:=\sum_{j=1}^m\binom{n+d_j-1}{d_j}$}, 
such that {${\rm in}(\langle \bm{F}\rangle)$} 
%$\langle {\rm LM}(\langle \bm{F}\rangle_R)\rangle_R$ 
is weakly reverse lexicographic, 
then we have ${\rm GB}(\bm{F})=\pi_{\bm{a}}({\rm GB}(\bm{G}))$ {and ${\rm LM}({\rm GB}(\bm{F}))={\rm LM}({\rm GB}(\bm{G}))$. 
Thus, ${\rm in}(\langle \bm{G}\rangle)$ is also weakly reverse lexicographic. 
Then Theorem \ref{thm:parametric} can be applied directly to $\bm{G}$, from which 
it follows that 
%Moreover, 
there exists a non-empty Zariski-open set $U_{\mathcal{A}}$ of \textcolor{black}{$\mathcal{A}_{(d_1,\ldots,d_m)}$} 
such that  
${\rm LM}({\rm GB}(\bm{F}))={\rm LM}({\rm GB}(\pi_{\bm{b}}(\bm{G})))$ and ${\rm in}(\langle \pi_{\bm{b}}(\bm{G})\rangle)$ is weakly reverse lexicographic for any $\bm{b}\in U_{\mathcal{A}}$}. 
\end{theorem}

{We remark that,} for each {homogeneous} ideal $I$, since the 
set of the leading monomials of the reduced Gr\"obner basis of $I$ forms the minimal generating 
set of the initial ideal {${\rm in}(I)$}, %${\rm in}(I)$ of $I$, 
the property of ``weakly reverse lexicographicness'' depends on the leading monomials of the 
reduced Gr\"obner basis. %we have the following which implies Theorem 3: 
{Considering the subset $U$ of $V_{n,m,(d_1,\ldots,d_m)}$ corresponding to $U_{\mathcal{A}}$ in Theorem \ref{thm:semi+wrl}, 
we have the following:}

\begin{corollary}\label{cor:semi+wrl}
If there exists a cryptographic semi-regular sequence    
$\bm{F}$ in $V_{n,m,(d_1,\ldots,d_m)}$ 
%that is, there exists an element $\alpha\in K^N$ such that $\bm{F}=\pi_\alpha(\bm{G})$, 
such that {${\rm in}(\langle \bm{F}\rangle)$} %$\langle {\rm LM}(\langle \bm{F}\rangle_R)\rangle_R$ 
is weakly reverse lexicographic, 
then there also exists a non-empty Zariski-open set $U$ of $V_{n,m,(d_1,\ldots,d_m)}$ %$\mathcal{A}=K^N$ 
such that {
$\bm{H}$ is cryptographic \textcolor{black}{semi-regular} and ${\rm in}(\langle \bm{H}\rangle)$}
%$\pi_{\bm{b}}(\bm{G})$ is semi-regular and 
%$\langle {\rm LM}(\langle \pi_{\bm{b}}(\bm{G})\rangle_R)\rangle_R$
is weakly reverse lexicographic for any $\bm{H}$ in $U$. 
%$\beta\in U$. 
{Namely, Fr\"{o}berg's conjecture and Moreno-Socias' one are both true for $(n,m,d_1,\ldots,d_m)$.}
\end{corollary}

\noindent
{{\it Proof of Theorem \ref{thm:semi+wrl}}.\ 
It suffices to show the following %Theorem \ref{prop:semi+wrl} %can be shown by proving the following 
by induction argument on a {non-negative} integer $d$:}
\begin{equation}\label{eq:GBF=GBG}
{\rm GB}(\bm{F})_{d}=\pi_{\bm{a}}({\rm GB}(\bm{G})_{d}) \mbox{ with } 
{\rm LM}({\rm GB}(\bm{F})_{d})={\rm LM}({\rm GB}(\bm{G})_{d}).
\end{equation}
%The second statement follows from the first statement 
%by using Theorem \ref{thm:parametric}. 
%\textcolor{blue}{$d=1$の場合がない $\Leftarrow$ $d=0$ から始めることにしました。}

%\medskip
{For $d=0$, it is clear as ${\rm GB}(\bm{F})_0={\rm GB}(\bm{G})_0=\emptyset$. 
Thus, we show that the \textcolor{black}{condition} \eqref{eq:GBF=GBG} holds for $d>0$} under the {\em assumption on induction} 
that it holds for $d-1$. 
{Put} ${\rm GB}(\bm{F})_d=\{h_1,\ldots,h_v\}$ and ${\rm GB}(\bm{G})_d=\{\bar{h}_1,\ldots,\bar{h}_{\bar{v}}\}$ {with $v=\# ({\rm GB}(\bm{F})_d)$ and $\bar{v}=\# ({\rm GB}(\bm{G})_d)$}, 
where we {order $h_1,\ldots,h_v$ and $\bar{h}_1,\ldots,\bar{h}_{\bar{v}}$} %satisfying} 
{so that ${\rm LM}(h_1)\succ {\rm LM}(h_2)\succ \cdots \succ {\rm LM}(h_v)$ and 
${\rm LM}(\bar{h}_1)\succ {\rm LM}(\bar{h}_2)\succ \cdots \succ {\rm LM}(\bar{h}_{\bar{v}})$, 
respectively.} 
%${\rm LM}(h_i)\prec  {\rm LM}(h_j)$ and ${\rm LM}(\bar{h}_i)\prec {\rm LM}(\bar{h}_j)$ \textcolor{blue}{for $i>j$.}
% \textcolor{magenta}{[$\mathrm{LM}(h_1) \prec \mathrm{LM}(h_2)\prec \cdots$と昇順になっていますが、あとで降順に切り替わっています 
% $\Leftarrow$ 降順に修正しました。]}
By the {induction hypothesis}, we have 
{${\rm LM}(\mathrm{GB}(\bm{F})_{<d})={\rm LM}(\mathrm{GB}(\bm{G})_{<d})$} and so 
${\rm LM}(\langle \bm{F}\rangle_{<d})={\rm LM}(\langle \bm{G}\rangle_{<d})$. 
We let $Y:={\rm LM}(\langle \bm{F}\rangle_{<d})$. 
{In the following we show $v=\bar{v}$ and $h_i=\pi_{\bm{a}}(\bar{h}_i)$ with 
${\rm LM}(h_i)={\rm LM}(\bar{h}_i)$ for each $i$.}
%We let $Y:=\{{\rm LM}(\la$

\medskip
\noindent
\underline{\bf Claim 1}: $\bar{v}=v$ holds. 
%\textcolor{blue}{cryptographic semi-regularしか使っていないので，別の補題にしてもよいかも（というかHilbert級数が等しいことしか使っていないのでは？）
%$\Leftarrow$ induction で式(4.2.1) が $<d$ で成り立つことから、Claim 1 を示すため、
%HS のみの情報では証明できていない}

Let $W:=\{ts: s\in Y,\ t\in \mathcal{T}_{d-\deg(s)}\}$. 
By {the} theory of Gr\"obner basis, the following holds: 
\[
{\rm LM}(\langle \bm{G}\rangle_d) = W\cup {\rm LM}({\rm GB}(\bm{G})_d), \quad 
{\rm LM}(\langle \bm{F}\rangle_d) = W\cup {\rm LM}({\rm GB}(\bm{F})_d)
\]
Moreover, {${\rm LM}(\langle \bm{G}\rangle_d) $ (resp.\ ${\rm LM}(\langle \bm{F}\rangle_d) $) \textcolor{black}{forms a linear basis} of ${\rm in}(\langle \bm{G}\rangle)_d$ (resp.\ ${\rm in}(\langle \bm{F}\rangle)_d$), respectively.}
Here we note {${\rm HS}_{K[X]/\langle \bm{F}\rangle}(z)={\rm HS}_{K[X]/{\rm in}(\langle \bm{F}\rangle)}(z)$ and ${\rm HS}_{L[X]/\langle \bm{G}\rangle}(z)={\rm HS}_{L[X]/{\rm in}(\langle \bm{G}\rangle)}(z)$}.

On the other hand, as $\bm{F}$ is cryptographic semi-regular, {so is $\bm{G}$ by (1) $\Rightarrow$ (4) of Proposition \ref{prop:generic_equivalence}, whence}
${\rm HS}_{K[X]/\langle \bm{F}\rangle}(z)={\rm HS}_{L[X]/\langle \bm{G}\rangle}(z)$. 
Therefore, their $d$-th coefficients, {namely} $\dim_K K[X]_d/\langle \bm{F}\rangle_d$ and $\dim_L L[X]_d/\langle \bm{G}\rangle_d$, 
coincide {with each other}, from which we have 
%have the same linear dimension, from which 
$v=\# ({\rm GB}(\bm{F})_d)=\# ({\rm GB}(\bm{G})_d)=\bar{v}$. 

\medskip
\noindent
\underline{\bf Claim 2}: 
For each $i$, {there is an element $\bar{\ell}_i$ in $\langle \bm{G}\rangle\cap K[\mathcal{C}][X]$ {with} ${\rm supp}(\bar{\ell}_i)\cap W=\emptyset$ %has no monomoial in $W$, 
{such that} $\pi_{\bm{a}}(\bar{\ell}_i)$ is defined %a non-zero constant multiple of $h_i$ and 
{and is} a non-zero constant multiple of $h_i$. 
Here we denote {by ${\rm supp}(h_i)$} the support of $h_i$, that is, the set of monomials appearing in $h_i$ with non-zero coefficient.
%[証明をまた変更しました。すみません。]

For each $i$, as $h_i\in \langle \bm{F}\rangle$, there are polynomials $q_{i,1},\ldots,q_{i,m}\in K[X]$ such that 
\[
h_i=\sum_{j=1}^m q_{i,j}f_j=\sum_{j=1}^m q_{i,j}\pi_{\bm{a}}(g_j)
=\pi_{\bm{a}}\left(\sum_{j=1}^m q_{i,j}g_j\right). 
\]
Let {$\ell_i:=\sum_{j=1}^m q_{i,j}g_j$} %[$j=i$？]} 
for each $i$.
Then $\ell_i$ belongs to $K[\mathcal{C}][X]\cap \langle \bm{G}\rangle$ as $g_i\in K[\mathcal{C}][X]$. 
%and $\ell\in \langle \bm{G}\rangle\cap K[\mathcal{C}][X]$. 
Moreover, as ${\rm LM}(\pi_{\bm{a}}(\ell_i))={\rm LM}(h_i)$, 
we have ${\rm LM}(\ell_i)\succeq {\rm LM}(h_i)$. 
{There are many choices on $q_{i,j}$, and $\ell_i$ may have some monomial in $W$ with non-zero coefficient. 
Thus, to eliminate such a monomial in $W$ from $\ell_i$, 
we apply the following procedure to ${\ell}_i$, {which outputs {a desired} $\tilde{\ell}_i$}: }

\medskip
\noindent
{(Step 1)  Check if $\ell_i$ has no monomial in $W$, that is, ${\rm supp}(\ell_i)\cap W=\emptyset$ or not:}

\medskip
\noindent
{(Step 2: Case where ${\rm supp}(\ell_i)\cap W=\emptyset$)}\\
{We set $\tilde{\ell}_i:=\ell_i$, and quite the procedure. }
%the polynomial 
%obtained by multiplying the least common multiple (LCM) of denominators of coefficients of 
%$\ell_i$ to $\ell_i$. Then $\bar{\ell}_i$ satisfies the condition of Claim 2. }
%\textcolor{magenta}{[$\ell_i$の係数の分母とありますが，$\ell_i \in K[\mathcal{C}][X]$なので分母は$1$ですが...
%$\Leftarrow$ loop で回ってくるので、あえてこのように書きましたが、分かりづらいので修正しました。]}

\medskip
\noindent
{
(Step 3: Case where ${\rm supp}(\ell_i)\cap W\not=\emptyset$)\\
Let $t$ be the greatest monomial in ${\rm supp}(\ell_i)\cap W$ with respect to $\prec$.
%and $b_{i,t'}(\ell_i)$ the coefficient of $t'$ in $\ell_i$. 
Then we compute $\ell'_{i}$ from $\ell_i$ by the following monomial \textcolor{black}{reduction,} by which 
the term with the monomial $t$ is eliminated:
\[
\ell'_{i}=\ell_{i}-b_{t}(\ell_{i})\frac{t}{{\rm LM}(\bar{p})} \bar{p}, 
\]
where $b_{t}(\ell_i)$ is the coefficient of $t$ in $\ell_i$ and 
$t$ is divided by ${\rm LM}(\bar{p})$ of some $\bar{p}\in {\rm GB}(\bm{G})_{<d}$. 
As $\pi_{\bm{a}}(\ell_i)$ is defined, }{$\pi_{\bm{a}}(b_t(\ell_i))$ is also defined.}
% \textcolor{magenta}{[$\pi_{\bm{a}}(b_t(\ell))$では $\Leftarrow$ はい修正しました。]} 
%is also defined 
%{and ${\rm LM}(\pi_{\bm{a}}(\bar{p}))={\rm LM}%(\bar{p})$. }%belongs to $K[\mathcal{C}]$. 
{Moreover, by the assumption of induction, $\pi_{\bm{a}}(\bar{p})$ is defined 
and ${\rm LM}(\pi_{\bm{a}}(\bar{p}))={\rm LM}(\bar{p})$.}
Therefore, $\pi_{\bm{a}}(\ell'_i)$ is defined and 
we have the following image of the monomial reduction:
%\begin{eqnarray*}
\[
\pi_{\bm{a}}(\ell'_{i})
=  \pi_{\bm{a}}(\ell_{i})-
\pi_{\bm{a}}(b_{t}(\ell_{i}))\frac{t}{{\rm LM}(\bar{p})} \pi_{\bm{a}}(\bar{p})
 =  h_i-\pi_{\bm{a}}(b_{t}(\ell_{i}))\frac{t}{{\rm LM}(\pi_{\bm{a}}(\bar{p}))} \pi_{\bm{a}}(\bar{p})
\]
%=\pi_{\bm{a}}(\ell_{i,0})=h_i,
%\end{eqnarray*}
% \textcolor{magenta}{[任意の$\bar{p}\in GB(G)_{<d}$に対し$\mathrm{LM}(\pi_{\bm{a}}(\bar{p}))=\mathrm{LM}(\bar{p})$であることはどこかで明確に書いた方が良い気がします。
% $\Leftarrow$ はい、加筆しました。]}
As $h_i$ is an element in {${\rm GB}(\bm{F})_d$}, we have ${\rm supp}(\pi_{\bm{a}}(\ell_i))\cap W=
{\rm supp}(h_i)\cap W=\emptyset$, 
from which $\pi_{\bm{a}}(b_{t}(\ell_i))=0$ and $\pi_{\bm{a}}(\ell'_{i})=h_i$. 
In this case, if ${\rm supp}(\ell'_{i})\cap W$ is still not empty, 
its greatest monomial is smaller than $t$. 
Replacing $\ell_i$ with $\ell'_{i}$, we go back to Step 1.
%If ${\rm supp}(\ell_{i,1})\cap W_i=\emptyset$, we 
%set $\bar{\ell}_i$ the polynomial obtained by multiplying the least common multiple (LCM) of denominators 
%of coefficients of $\ell_{i,1}$ to $\ell_{i,1}$. 
%Then $\bar{\ell}_$ satisfies the condition of Claim 2. 
%Otherwise, replacing $\ell_i$ with $\ell_{i,1}$, we continue Step 2. 

\medskip
{
Since $W$ is a finite set and the greatest monomial of ${\rm supp}(\ell_i)\cap W$ is decreasing, 
the procedure terminates in finitely many steps. }
{Then, we convert the output polynomial $\tilde{\ell}_i$ 
to a polynomial $\bar{\ell}_i$ in $K[\mathcal{C}][X]$ 
by multiplying the least common multiple (LCM) of the denominators of coefficients of 
$\tilde{\ell}_i$ to itself. 
Then $\bar{\ell}_i$ satisfies the condition of Claim 2.
}

\medskip
\noindent
\underline{\bf Claim 3}: For each $i$, {the image} $\pi_{\bm{a}}(\bar{h}_i)$ is defined and 
it coincides with $h_i$. 

%Let $W:=\{ts : s\in Y, t\in \mathcal{T}_{d-\deg(s)}\}$. 
Let {${\rm vect}(\bar{\ell}_i)$ be the {\em coefficient vector} of $\bar{\ell}_i$}, where components 
are indexed by monomials in $\mathcal{T}_d$ in decreasing order. 
As {$\bar{\ell}_i$} does not have any monomial in $W$, 
we consider a sub-vector {${\rm vect}_0(\bar{\ell}_i)$ which is obtained from ${\rm vect}(\bar{\ell}_i)$} 
by eliminating all coefficients of monomials in $W$. 
Let $t_1\succ t_2\succ \cdots \succ t_w$ are indices of vectors. 
{(Thus, $\{t_1,\ldots,t_w\}=\mathcal{T}_d\smallsetminus W$.)}
Since ${\rm in}(\langle \bm{F}\rangle)$ 
is weakly reverse lexicographic, 
{
it can be shown easily that $t_1={\rm LM}(h_1)$, \ldots, $t_v={\rm LM}(h_v)$. 
Because,
any monomial of {degree $d$} not smaller than ${\rm LM}(h_v)$ should belong to $W\cup \{{\rm LM}(h_1),\ldots,{\rm LM}(h_{v})\}$, 
which implies that ${\rm LM}(h_1),\ldots,{\rm LM}(h_v)$ are top $v$ monomials in $\mathcal{T}_d\smallsetminus W$.}
%any monomial not smaller than ${\rm LM}(h_v)$ should belong to $W\cup \{{\rm LM}(h_1),\ldots,{\rm LM}(h_{v})\}$.}
%[$\bar{\ell}_i$に
%$\bm{a}$を代入した後で$h_i$の非零定数倍になるので, $\bar{\ell}_i$の係数で$\bm{a}$を代入して$0$になるものがあるとまずい気がします
%$\Leftarrow$ ${\rm supp}(\bar{h}_i)\cap W=\emptyset$ なので問題はないです。]}. 
Letting {$\bar{\ell}_i=a_{i,t_1}t_1+\cdots +a_{i,t_v}t_v+a_{i,t_{v+1}}t_{v+1}+\cdots 
+a_{i,t_w}t_w$} {with $a_{i,t_j}\in K[\mathcal{C}]$} for each $i$,
a sub-Macaulay matrix $M$ over $K[\mathcal{C}]$ 
is constructed from vectors {${\rm vect}_0(\bar{\ell}_1),\ldots, 
{\rm vect}_0(\bar{\ell}_v)$} as follows:
\[
M:=\begin{pmatrix}
a_{1,t_1} & \ldots & a_{1,t_v} & a_{1,t_{v+1}} & \ldots & a_{1,t_w}\\
%c_{2,t_1} & \ldots & c_{2,t_u} & \textcolor{red}{c_{2,t_{v+1}}} & \ldots & c_{2,t_v}\\
\vdots  & \ddots & \vdots  &  \vdots      &           & \vdots\\
a_{v,t_1} & \ldots & a_{v,t_v} & a_{v,t_{v+1}} & \ldots & a_{v,t_w}
\end{pmatrix}
\]
{Then, we apply $\pi_{\bm{a}}$ naturally to $M$ by $\pi_{\bm{a}}(M)_{i,t}=\pi_{\bm{a}}(M_{i,t})$ for each $(i,t)$-entry.
As $\pi_{\bm{a}}(\bar{\ell}_i)$ is a non-zero constant 
multiple of $h_i$,}
%\textcolor{magenta}{[$a_{i,j}$，$b_{i,j}$の添え字にミスがある（$a_{i,{t_j}}$）$\Leftarrow$ 修正しました。]}
%has a none-zero term of ${\rm LM}(h_i)$ 
%and no monomial ${\rm LM}(h_j)$ for any $j\not=i$, }
we have 
\[
\pi_{\bm{a}}(M)=\begin{pmatrix}
b_{1,t_1} & 0 & \ldots & 0 & b_{1,t_{v+1}} & \ldots & b_{1,t_w}\\
0        & {b_{2,t_2}} & \ldots & 0 & {b_{2,t_{v+1}}} & \ldots & {b_{2,t_w}}\\
\vdots   & \vdots & \ddots  & \vdots &  \vdots      &           & \vdots\\
0        & 0  & \ldots & b_{v,t_v} & b_{v,t_{v+1}} & \ldots & b_{v,t_w}
\end{pmatrix}, 
\]
where $b_{i,t_j}=\pi_{\bm{a}}(a_{i,t_j})$ for each $i$ and $j$, and $b_{i,t_i}\not=0$ for $1\leq i\leq v$. 
Let $M_0$ be the square matrix consisting of the first $v$ left columns of $M$ and 
$\overline{M_0}$ the adjugate matrix of $M_0$.
Namely we have
\[
M_0:=\begin{pmatrix}
a_{1,t_1} & \ldots & a_{1,t_v} \\
%c_{2,1} & \ldots & c_{2,u} \\
\vdots  & \ddots & \vdots  \\
a_{v,t_1} & \ldots & a_{v,t_v}
\end{pmatrix},
\]
and $\overline{M_0}$ is a matrix over $K[\mathcal{C}]$. Then, 
$\pi_{\bm{a}}(M_0)$ is a diagonal matrix with non-zero diagonal entries \textcolor{black}{$b_{1,t_1},\ldots,b_{v,t_v}$} 
and $\det(\pi_{\bm{a}}(M_0))=\pi_{\bm{a}}(\det(M_0))\not=0$. 
Hence we have $\det(M_0)\not=0$ as an element in $K[\mathcal{C}]$. 
Since $\overline{M_0}M_0=\det(M_0)E_v$,
where $E_v$ denotes the identity matrix of degree $v$,
we have 
\[
\textcolor{black}{
\overline{M_0}M=(\det(M_0)E_v, \overline{M_0}V_{v+1},\ldots,\overline{M_0}V_w), }
\]
where $V_j$ denotes the $j$-th column of $M$ for each $j$. 

For each $i$, let $\bar{h}'_i$ be a polynomial in $K[\mathcal{C}][X]$ 
converted from the $i$-th low of $\overline{M_0}M_0$. 
Then, 
{since each $\bar{\ell}_i$ belongs to $\langle \bm{G}\rangle \cap 
K[\mathcal{C}][X]$ and the left-multiplication of $\overline{M}_0$ induces low-operations on $M$ over $K[\mathcal{C}]$}, 
each $\bar{h}'_i$ belongs to $\langle \bm{G}\rangle\cap K[\mathcal{C}][X]$. 
{Moreover, we have $t_i={\rm LM}(\bar{h}'_i)\in {\rm in}(\langle \bm{G}\rangle)$} 
and ${\rm LC}(\bar{h}'_i)=\det(M_0)$. 
{Since $t_i$ cannot be divided by any monomial in ${\rm LM}({\rm GB}(\bm{F})_{<d})={\rm LM}({\rm GB}(\bm{G}))_{<d})$, 
the monomials $t_1,\ldots,t_v$ should belong to the minimal generating set ${\rm LM}({\rm GB}(\bm{G}))$ of 
${\rm in}(\langle \bm{G}\rangle)$.
Comparing the number $v=\# {\rm LM}(\bm{G}_d)$, 
we have $\{t_1,\ldots,t_v\}={\rm LM}(\bm{G}_d)$.} 
%a linear base $W\cup {\rm LM}({\rm GB}(\bm{G})_d)$ 
%of ${\rm in}(\langle \bm{G}\rangle)_d$, $\{t_1,\ldots,t_v\}$ should coincides with 
%${\rm LM}({\rm GB}(\bm{G})_d)$. 

{
Now let $\hat{h}_i=\frac{\bar{h}'_i}
{{\rm LC}(\bar{h}'_i)}=\frac{\bar{h}'_i}{\det(M_0)}$ for each $i$. 
Then, we have ${\rm LC}(\hat{h}_i)=1$ for each $i$. 
Since 
${\rm GB}(\bm{G})_{<d}\cup \{\hat{h}_1,
\ldots,\hat{h}_v\}$ forms a set of 
inter-reduced elements, 
\textcolor{black}{$\{\hat{h}_1,\ldots,\hat{h}_v\}$} 
coincides with ${\rm GB}(\bm{G})_d$, that is,
$\bar{h}_i=\hat{h}_i$ for each $i$.} 

%from $\{\bar{h}'_1,\ldots,\bar{h}'_v\}$, we 
%obtain ${\rm GB}(\bm{G})_d$ by making their LM 1
%\textcolor{magenta}{
%Moreover, $\bar{h}'_i$ has no monomial in %$W$ %terms divisible by ${\rm LM}({\rm GB}(\bm{G})_{<d})$ 
%$W\cup (\{\bar{h}_1,\ldots, \bar{h}_v\}\setminus \{\bar{h}_i\})$. 
%%and neither ${\rm LM}(\bar{h}_j)$ for $j\not=i$. 
%Thus, ${\rm LM}(\{\bar{h}_1,\ldots,\bar{h}_v\})$ can be added to 
%the linear basis of $\langle \bm{G}\rangle_d$. 
%Counting the number of elements, 
%${\rm LM}(\{\bar{h}_1,\ldots,\bar{h}_v\})$ should conicide with ${\rm LM}({\rm GB}(\bm{G})_d)$.
%[この2文を理解できていません。誤植がありそうですが。$\Leftarrow$ 証明を詳しく書いてみました。]}
%Also comparing the leading coefficients, 
%it follows that $\bar{h}_i=\frac{\bar{h}'_i}{\det(M_0)}$ for each $i$. 

Finally we show $\pi_{\bm{a}}(\bar{h}_i)=h_i$ for each $i$. 
Since $\pi_{\bm{a}}(\det(M_0))\not=0$, the image $\pi_{\bm{a}}(\bar{h}_i)$ is defined for each $i$.
Moreover, ${\rm LM}(\pi_{\bm{a}}(\bar{h}_i))=t_i={\rm LM}(h_i)$, {${\rm LC}(\pi_{\bm{a}}(\bar{h}_i))=1$, 
and $\bar{h}_i$ is {\em reduced} with respect to 
%has no term divisible by \textcolor{red}{${\rm LM}
${\rm GB}(\bm{G})_{<d}$ and $t_j$ for $j\not=i$. }
This implies $\pi_{\bm{a}}(\bar{h}_i)=h_i$ for each $i$ 
{by the uniqueness of the reduced Gr\"obner basis with respect to a fixed monomial ordering.}
% \textcolor{magenta}{[簡約$d$-Gr\"{o}bner基底の一意性より, があった方がよい $\Leftarrow$ 加筆しました。]}
\qed

%\textcolor{magenta}{命題の主張のMoreover以下，すなわちnon-empty zariski openの存在の証明はないのでしょうか？
%$\Leftarrow$ 証明の冒頭で、"The second statement ..." と説明してます。]}
%
\medskip
{Finally we consider a generalized cryptographic semi-regular sequence belonging to $Z_{\bm{o}}$, \textcolor{black}{where ${Z}_{\bm{o}}$ is defined in Section \ref{subsec:rest_generic} (see \eqref{eq:Zp})}.
%Then, let $\bm{B}_{(d_1,\ldots,d_m)}:=\prod_{i=1}^m \bm{B}_{d_i}$ be an affine space 
%corresponding to $Z_{\bm{0}}$. 
%where $\bm{B}_d$ be a subset with generic polynomial, 
%and $\bm{G}=(g_1,\ldots,g_m)$ its generic sequence, respectively. 
%where each $g_i$ is a generic polynomial 
%for $\mathcal{B}_{d_i}$. 
In this case, the set $\mathcal{C}$ of parameters is 
$\{c_{j,t}:1\leq j\leq m,\ t\in \mathcal{T}_{d_j}\smallsetminus \{x_n^{d_j}\}\}$. 
Moreover, we set 
$\mathcal{B}_{d}:=\{(a_{t})_{t\in \mathcal{T}_d\smallsetminus \{x_n^d\}}:
a_t\in K\}$ for each positive integer $d$, 
and set $\mathcal{B}_{(d_1,\ldots,d_m)}:=\prod_{j=1}^m \mathcal{B}_{d_j}$. 
Then, for each element $\bm{a}$ in $\mathcal{B}_{(d_1,\ldots,d_m)}$, 
we define a map $\pi_{\bm{a}}$ from $K[\mathcal{C}][X]$ to $K[X]$ 
by substituting each $c_{j,t}$ with $a_{j,t}$.
The map $\pi_{\bm{a}}$ 
is generalized to {a map from $K[\mathcal{C}][X]^m$ to $K[X]^m$}, 
for which we use the same symbol. 
\textcolor{black}{Let $\bm{G}$ be a generic sequence for $Z_{\bm{o}}$ defined in Definition \ref{df:genericsequence}.}
Under this setting, we can apply the same proof in Theorem \ref{thm:semi+wrl} 
to obtain the following {results} corresponding to 
Theorem \ref{thm:semi+wrl} and Corollary \ref{cor:semi+wrl}. }

\begin{proposition}\label{prop:gsemi+wrl}
\textcolor{black}{Let $\bm{G}$ be a generic sequence for $Z_{\bm{o}}$.}
If there exists a generalized cryptographic semi-regular sequence $\bm{F}$ 
in $Z_{\bm{o}}$, 
that is, $\bm{F}=\pi_{\bm{a}}(\bm{G})$ for some element $\bm{a}\in \mathcal{B}_{(d_1,\ldots,d_m)}$, 
such that $\dim_K (K[X]/\langle \bm{F}\rangle)_D=1$ for $D=\widetilde{d}_{\rm reg}(\langle \bm{F}\rangle)<\infty$ and 
${\rm in}(\langle \bm{F}\rangle)$ is weakly reverse lexicographic, then 
${\rm GB}(\bm{F})=\pi_{\bm{a}}({\rm GB}(\bm{G}))$ 
{and ${\rm LM}({\rm GB}(\bm{F}))={\rm LM}({\rm GB}(\bm{G}))$. 
In this case, $\bm{G}$ is generalized cryptographic semi-regular 
with \textcolor{black}{$\dim_L (L[X]/\langle \bm{G}\rangle)_D=1$} for $D=\widetilde{d}_{\rm reg}(\langle \bm{G}\rangle)<\infty$ 
and ${\rm in}(\langle \bm{G}\rangle)$ is weakly reverse lexicographic.} 
{Moreover, in this case, 
there exists a non-empty Zariski-open set $U$ of $Z_{\bm{o}}$ 
{with respect to the induced topology} such that 
$\bm{H}$ is generalized cryptographic semi-regular and 
${\rm in}(\langle \bm{H}\rangle)$ is weakly reverse lexicographic for any $\bm{H}$ in $U$. }
\end{proposition}

\begin{proof}
By Proposition \ref{prop:generic_equivalence2}, we have 
\[{\rm HS}_{K[X]/\langle \bm{F}\rangle}(z) \equiv 
{\rm HS}_{L[X]/\langle \bm{G}\rangle}(z)\equiv 
\frac{\prod_{j=1}^m(1-z^{d_j})}{(1-z)^n} \pmod{z^D},
\]
where $D=\widetilde{d}_{\rm reg}(\langle \bm{F}\rangle)=\widetilde{d}_{\rm reg}(\langle \bm{G}\rangle)$ 
{and \textcolor{black}{$\dim_L (L[X]/\langle \bm{G}\rangle)_D=1$}.
%Moreover, 
%it follows from 
In this case, as $\bm{F}$ and $\bm{G}$ have $(0:\cdots:0:1)$ as their projective zero, 
$x_n^D$ cannot belong neither to $\langle \bm{F}\rangle_D$ nor to $\langle \bm{G}\rangle_D$. 
This implies that 
%$\dim_K K[X]_D/\langle \bm{F}\rangle_{D}=\dim_L L[X]_D/\langle \bm{G}\rangle_D=1$, 
%${\rm in}(\langle \bm{F}\rangle)$ is weakly reverse lexicographic, %\dim_L L[X]_D/\langle \bm{G}\rangle_D=1$, 
\textcolor{black}{$(K[X]/\langle \bm{F}\rangle)_{D}$ and $(L[X]/\langle \bm{G}\rangle)_D$} have a common linear 
basis $\{x_n^D\}$, 
%Because , 
which also implies that 
${\rm max.GB.deg}(\langle \bm{F}\rangle):={\rm max}\{\deg(h):h\in {\rm GB}(\bm{F})\}$ 
and ${\rm max.GB.deg}(\langle \bm{G}\rangle)$ are {both} smaller than $D$.}
\if 0
In this case, as $\dim_K K[X]_D/\langle \bm{F}\rangle_{D}=1$ and 
${\rm in}(\langle \bm{F}\rangle)$ is weakly reverse lexicographic, %\dim_L L[X]_D/\langle \bm{G}\rangle_D=1$, 
$K[X]_D/\langle \bm{F}\rangle_{D}$ %and $L[X]_D/\langle \bm{G}\rangle_D$ 
has a linear 
basis $\{x_n^D\}$, which implies that 
${\rm max.GB.deg}(\langle \bm{F}\rangle)={\rm max}\{\deg(h):h\in {\rm GB}(\bm{F})\}$ %and ${\rm max.GB.deg}(\langle \bm{G}\rangle)$ are 
is smaller than $D$. 
\fi
Thus, all arguments in the proof of Theorem \ref{thm:semi+wrl} can 
be applied to the case of Proposition \ref{prop:gsemi+wrl}, %ill $d:={\rm max.GB.deg}(\langle \bm{F}\rangle)$, 
from which we obtain ${\rm GB}(\bm{F})=\pi_{\bm{a}}({\rm GB}(\bm{G}))$ 
{and ${\rm LM}({\rm GB}(\bm{F}))={\rm LM}({\rm GB}(\bm{G}))$.}
%In this case, ${\rm LM}(\langle \bm{F}\rangle_d)=\mathcal{T}_d\setminus\{x_n^d\}$ and 
%hence ${\rm LM}(\langle \bm{G}\rangle_d)=\mathcal{T}_d\setminus \{x_n^d\}$. 
%As $\langle \bm{G}\rangle$ is not Artinian, ${\rm LM}({\rm GB}(\bm{G})_{\leq d})$ 
%should be already the minimal generator of the initial ideal ${\rm in}(\langle \bm{G}\rangle)$.
%Thus, we have ${\rm GB}(\bm{G})={\rm GB}(\bm{G})_{\leq d}$. }

{Then, by applying Theorem \ref{thm:parametric} to $\bm{G}$, 
there exists a non-empty Zariski-open set $U_{\mathcal{B}}$ of $\mathcal{B}_{(d_1,\ldots,d_m)}$ 
such that {${\rm LM}({\rm GB}(\bm{F}))={\rm LM}({\rm GB}(\pi_{\bm{b}}(\bm{G})))$} 
for any $\bm{b}$ in $U_B$, that is, \textcolor{black}{${\rm in}(\langle \bm{F}\rangle)=
{\rm in}(\langle \pi_{\bm{b}}(\bm{G})\rangle))$}. 
This implies that the subset $U$ of $Z_{\bm{o}}$ corresponding to $U_{\mathcal{B}}$ 
is also a non-empty Zariski-open set such that $\bm{H}$ 
is generalized cryptographic semi-regular and 
${\rm in}(\langle \bm{H}\rangle)$ is weakly reverse lexicographic for any $\bm{H}\in U_{\mathcal{B}}$.}
\end{proof}

\begin{proposition}\label{prop:Z_p}
{Suppose that the conditions of Proposition \ref{prop:gsemi+wrl} are satisfied.
Then, for any $\bm{p}\in \mathbb{P}^{n-1}(K)$, 
there exists a non-empty Zariski-open set $U_{\bm{p}}$ of $Z_{\bm{p}}$ 
with respect to the induced topology such that 
$\bm{H}$ is generalized cryptographic semi-regular and 
${\rm in}(\langle \bm{H}\rangle)$ is weakly reverse lexicographic for any $\bm{H}$ in $U_{\bm{p}}$. }
\end{proposition}
\begin{proof}
%\textcolor{magenta}{Lemma \ref{lem:other_point}では点をうつす変換のあとでweakly reverse lex性も保たれるとまでは言っていないので、証明が必要です（Lemma 4.1.6の$\phi_{\bm{a}}%^{\ast}$でGBのLMは変わらないので大丈夫そう）}
%
%\medskip
We show {the assertion} by using the arguments in the proof of 
Lemma \ref{lem:other_point}
%parametric-Hilbert:const}. 
%\begin{lemma}
Let $\bm{p}:=(p_1:\ldots:p_n)$ be {a point} in $\mathbb{P}^{n-1}(K)$. 
Without loss of generality, we may assume that $p_n\not=0$ and moreover $p_n=1$, 
and consider the 
following maps, where %$\mathcal{A}:=\mathcal{A}_{n,m,(d_1,\ldots,d_m)}$, $\mathcal{B}:=\mathcal{B}_{n,m,(d_1,\ldots,d_m)}$ 
%and 
$\mathcal{V}:=V_{n,m,(d_1,\ldots,d_m)}$: 
\begin{eqnarray*}
&& \phi_{\bm{p}}: %\mathbb{P}^{n-1}(K) \ni 
(a_1:\cdots: a_{n-1}:a_n)\mapsto 
(a_1-p_1a_n:\cdots:a_{n-1}-p_{n-1}a_n:a_n),\\% \in \mathbb{P}^{n-1}(K), \\
&& \phi_{\bm{p}}^*: K[X]\ni h(x_1,x_2,\ldots,x_n) \mapsto 
h(x_1-p_1x_n,\ldots ,x_{n-1}-p_{n-1} x_n,x_n)\in K[X],\\
&& \Phi_{\bm{p}}:\mathcal{V} \in (h_1,\ldots,h_m)\mapsto 
(\phi_{\bm{p}}^*(h_1),\ldots,\phi_{\bm{p}}^*(h_m))\in \mathcal{V}.
\end{eqnarray*}
Then 
%$\phi_{\bm{p}}$ induces a bijection from $\mathcal{B}$ to the subset 
%$\mathcal{B}'$ of $\mathcal{A}$ 
%corresponding to $Z_{\bm{p}}$, 
$\phi_{\bm{p}}^*$ is a ring homomorphism 
and $\Phi_{\bm{p}}$ is an invertible linear map on $\mathcal{V}$ and 
its restriction on $Z_{\bm{o}}$ induces a bijection between $Z_{\bm{o}}$ and $Z_{\bm{p}}$.
We also 
let $\prec$ be the graded reverse lexicographical ordering with $x_1\succ x_2\succ \cdots \succ x_n$.

Now \textcolor{black}{we} suppose that the conditions of Proposition \ref{prop:gsemi+wrl} hold. 
Then \textcolor{black}{the generic sequence $\bm{G}$ for $Z_{\bm{o}}$} is generalized cryptographic semi-regular sequence with \textcolor{black}{$\dim_L (L[X]/\langle \bm{G}\rangle)_D=1$} for $D=\widetilde{d}_{\rm reg}(\langle \bm{G}\rangle)<\infty$ and ${\rm in}(\langle \bm{G}\rangle)$ is 
weakly reverse lexicographic, and there is a non-empty Zariski-open set $U_{\bm{o}}$ in $Z_{\bm{o}}$ 
such that, for any $\bm{H}$ in $U_{\bm{o}}$, $\bm{H}$ is generalized cryptographic semi-regular and 
{${\rm in}(\langle \bm{H}\rangle)$} is weakly reverse lexicographic. 
%These properties are determined by ${\rm LM}({\rm GB}(\bm{G}))$, the minimal generating set of 
%the initial ideal ${\rm in}(\langle \bm{G}\rangle$ with respect to $\prec$. 

Meanwhile, since $\phi_{\bm{p}}^*$ does not change LM for any polynomial in $L[X]$ and 
${\rm in}(\langle \bm{F}\rangle)={\rm in}(\langle \Phi_{\bm{p}}(\bm{F})\rangle)$ for any $\bm{F}$ in $U_{\bm{o}}$,
it can be shown easily that, for any $\bm{H}$ in $U_{\bm{o}}$, 
$\Phi_{\bm{p}}(\bm{H})$ is generalized \textcolor{black}{cryptographic} semi-regular and 
${\rm in}(\langle \Phi_{\bm{p}}(\bm{H})\rangle)$ is weakly reverse lexicographic. 
Let $U_{\bm{p}}=\Phi_{\bm{p}}(U_{\bm{o}})$.  Then, $U_{\bm{p}}$ is also a non-empty 
Zariski-open set of $\Phi_{\bm{p}}(Z_{\bm{o}})=Z_{\bm{p}}$ \textcolor{black}{(see the proof of Lemma \ref{lem:other_point})}. %lem:parametric-Hilbert:const}).
Thus, $U_{\bm{p}}$ satisfies the assertion of Proposition \ref{prop:Z_p}. 
\end{proof}

\begin{remark}
{It is clear that the converse of Proposition \ref{prop:Z_p} also holds, that is, 
if there is a generalized cryptographic semi-regular sequence $\bm{H}$ in $Z_{\bm{p}}$ 
such that $\dim_K(K[X]/\langle \bm{H}\rangle)_D=1$ for $D=\widetilde{d}_{\rm reg}(\langle \bm{H}\rangle)<\infty$ 
and ${\rm in}(\langle \bm{H}\rangle)$ is weakly reverse lexicographic, 
then the sequence $\bm{F}=(\Phi_{\bm{p}})^{-1}(\bm{H})$ belongs to $Z_{\bm{o}}$ and 
it satisfies the condition of Proposition \ref{prop:gsemi+wrl}. }
\end{remark}
\if 0
%by a basic Lemma (see Lemma \ref{lm:GB_a} in Appendix), 
$\phi_{\bm{p}}^*({\rm GB}(\bm{G}))$ is a minimal Gr\"obner basis of $\langle \phi_{\bm{p}}^*(\bm{G})\rangle$, 
${\rm LM}(\phi_{\bm{p}}^*({\rm GB}(\bm{G})))={\rm LM}({\rm GB}(\bm{G}))$ and 
${\rm in}(\langle \phi_{\bm{p}}^*(\bm{G})\rangle)={\rm in}(\langle \bm{G}\rangle)$. 
Thus, these properties also hold for $\phi_{\bm{a}}^*(\bm{G})$, 
that is, $\phi_{\bm{p}}(\bm{G})$ is a \textcolor{red}{generalized} cryptographic semi-regular sequence 
with $\dim_L L[X]_D/\langle \phi_{\bm{a}}^*(\bm{G})\rangle_D=1$ 
for $D=\widetilde{d}_{\rm reg}(\langle \phi_{\bm{p}}^*(\bm{G})\rangle)$. 
Moreover, the initial ideal 
${\rm in}(\langle \phi_{\bm{p}}^*(\bm{G})\rangle)$ is weakly reverse lexicographic. 

%by Lemma \ref{lm:GB_a}, 
%$\phi_{\bm{a}}^*({\rm GB}(\bm{G}))$ is a minimal Gr\"obner basis of $\langle \phi_{\bm{a}}^*(\bm{G})\rangle$, 
%and moreover, ${\rm LM}(\phi_{\bm{a}}^*({\rm GB}(\bm{G}))={\rm LM}({\rm GB}(\bm{G}))$, 
%which is the minimal generator of the weakly reverse lexicographic ideal ${\rm in}(\langle \bm{G}\rangle)$. 
%Thus, ${\rm in}(\langle \phi_{\bm{a}}^*(\bm{G})\rangle)$ is also weakly reverse lexicographic. 
%By Theorem \ref{thm:parametric}, 
%for any $\bm{b}$ in $$
By Theorem \ref{thm:parametric}, there exists a non-empty Zariski-open set $U'$ in $\mathcal{B}_{n,m,(d_1,\ldots,d_m)}$ 
such that $\bm{F}'=\pi_{\bm{a}}(\phi_{\bm{p}}(\bm{G}))$ is generalized cryptographic semi-regular \textcolor{red}{sequence} with 
$\dim_K K[X]_D/\langle \bm{F}'\rangle_D=1$ for $D=\widetilde{d}_{\rm reg}(\bm{F}')=\wodetilde{d}_{\rm reg}(\bm{G})$
and ${\rm in}(\langle \pi_{\bm{a}}(\phi_{\bm{p}}^*(\bm{G}))\rangle)$ is weakly reverse lexicographic 
for any $\bm{a}$ in $U'$. Moreover, it is clear that $\bm{F}'$ has a unique projective zero $\bm{p}$. 
Thus, $\bm{F}'$ belongs to $Z_{\bm{p}}$. 

Now let $U_{\bm{p}}:=\{\pi_{\bm{a}}(\phi_{\bm{p}}^*(\bm{G})):a\in U'\}$. 
Then, by Lemma \ref{lem:other_point}, 
%and $U'_{\bm{p}}:=\{(b_{i,t}) \in \mathcal{A}: \sum_{t\in \mathcal{T}_d} p_t t\in U_{\bm{p}}\}$. 
%Then, each $(b_{i,t})$ in $U'_{\bm{p}}$ is expressed as a rational function over $K$ on $\bm{a}$ in $U'$. 
%Thus, 
$U_{\bm{p}}$ is also a Zariski-open set of $Z'_{\bm{p}}$. 
Also for any $\bm{H}$ in $U_{\bm{p}}$, $\bm{H}$ is generalized cryptographic semi-regular 
and ${\rm in}(\langle \bm{H}\rangle)$ is weakly reverse lexicographic. 
%Let a non-empty Zariski-open set $U$ of $Z_{\bm{o}}$ 
\fi

\subsubsection*{Acknowledgement}
This work was supported by JSPS Grant-in-Aid for Young Scientists 23K12949, JSPS Grant-in-Aid for Scientific Research (C) 21K03377, and JST CREST Grant Number JPMJCR2113.

\if 0
\appendix

\section{}

We provide the following lemmas. 
\begin{lemma}\label{lm:invLM}
For any term $t$ in $\mathcal{T}$, 
${\rm LM}(\phi_{\bm{a}}^*(t))=t$ and 
${\rm LC}(\phi_{\bm{a}}^*(t))=1$. 
As a simple consequence, 
for any polynomial $h$ in $R$, 
${\rm LM}(h)={\rm LM}(\phi_{\bm{a}}^*(h))$ 
and ${\rm LC}(h)={\rm LC}(\phi_{\bm{a}}^*(h))$ hold.
\end{lemma}
\begin{proof}
It is clear that, for $t,t'$ in $\mathcal{T}$, 
${\rm LM}(\phi_{\bm{a}}^*(t\times t'))=
{\rm LM}(\phi_{\bm{a}}^*(t)\times {\rm LM}(\phi_{\bm{a}}^*(t')$, 
from which we have 
${\rm LC}(\phi_{\bm{a}}^*(t\times t'))
={\rm LC}(\phi_{\bm{a}}^*(t))\times {\rm LC}(\phi_{\bm{a}}^*(t'))$.
Since ${\rm LM}(\pi_{\bm{a}}^*(x_i))={\rm LM}(x_i-a_ix_n)=x_i$ and ${\rm LC}(\pi_{\bm{a}}^*(x_i))
={\rm LC}(x_i)=1$, 
we have ${\rm LM}(x_1^{e_1}\cdots x_n^{e_n})={\rm LM}(x_1)^{e_1}\cdots {\rm LM}(x_n)^{e_n}
=x_1^{e_1}\cdots x_n^{e_n}$ for any $(e_1,\ldots,e_n)\in \mathbb{N}_{\geq 0}^n$.
%Thus, it suffices to show that ${\rm LM}(x_i^d)=x_i^d$ for each $i$ and a positive integer $d$.
%In fact, for $x_i^{d}$, we have 
%\[
%\phi_{\bm{a}}^*(x_i^d)=(x_i-a_i/a_nx_n)=x_i^d +\sum_{k=1}^d (-1)^k (a_i/a_n)^kx_i^{d-k}x_n^k.
%\]
%As $\prec$ is the graded reverse lexicographic order, ${\rm LM}(\phi_{\bm{a}}^*(x_i^d))=x_i^d$. 
Moreover, ${\rm LC}(x_1^{e_1}\cdots x_n^{e_n})={\rm LC}(x_1)^{e_1}\cdots {\rm LC}(x_n)^{e_n}
=1$. 
%${\rm LC}(\phi_{\bm{a}}^*(x_i^d))=1={\rm LC}(x_i^d)$. 
\end{proof}

\begin{lemma}\label{lm:GB_a}
$\phi_{\bm{a}}^*({\rm GB}(\bm{G}))$ is a minimal Gr\"obner basis of $\langle \phi_{\bm{a}}^*(\bm{G})\rangle$ 
with respect to $\prec$.
\end{lemma}
\begin{proof} We divide the proof into 3 Claims. 

\medskip
\noindent
Claim 1. 
%As ${\rm GB}(\bm{G})$ generates $\langle \bm{G}\rangle$, 
$\phi_{\bm{a}}^*({\rm GB}(\bm{G}))$ also generates $\langle \phi_{\bm{a}}^*(\bm{G})\rangle$. \\
For $h$ in $\langle \bm{G}\rangle$, $h=\sum_{i=1}^m b_i g_i$ for some $b_i$ in $R$. Then,
$\phi_{\bm{a}}^*(h)=\sum_{i=1}^m \phi_{\bm{a}}^*(b_i)\phi_{\bm{a}}^*(g_i)$. 

\medskip
\noindent
Claim 2. $\phi_{\bm{a}}({\rm GB}(\bm{G})$ is a Gr\"obner basis of $\langle \phi_{\bm{a}}^*(\bm{G})\rangle$ 
with respect to $\prec$.\\
As ${\rm GB}(\bm{G})$ is a Gr\"obner basis, 
for its each pair $(h_1,h_2)$, its S-polynomial can be expressed as follows:
\[
{\rm Spol}(h_1,h_2)=c_1t_1h_1-c_2t_2h_2=\sum_{k=3}^v u_3h_3, 
\]
where $t_1,t_2\in \mathcal{T}$, $c_1,c_2\in K\setminus\{0\}$, 
$h_k\in {\rm GB}(\bm{G})$ and $u_k\in R$ and ${\rm LM}(u_kh_k)\prec {\rm LM}(u_1h_1)$ for $k\geq 3$.
Then, for the corresponding pair $\phi_{\bm{a}}^*(h_1),\phi_{\bm{a}}^*(h_2)$ of 
$\phi_{\bm{a}}^*({\rm GB}(\bm{G}))$, we also have 
\begin{eqnarray*}
{\rm Spol}(\phi_{\bm{a}}^*(h_1),\phi_{\bm{a}}^*(h_2)) &= & c_1t_1\phi_{\bm{a}}^*(h_1)-c_2t_2\phi_{\bm{a}}^*(h_2)\\
& = & c_1(\phi_{\bm{a}}^*(t_1)-t_1)\phi_{\bm{a}}^*(h_1)-c_2(\phi_{\bm{a}}^*(t_2)-t_2)\phi_{\bm{a}}(h_2)\\
&  & + \sum_{k=3}^v \phi_{\bm{a}}^*(u_k)\phi_{\bm{a}}^*(h_k). 
\end{eqnarray*}
By Lemma \ref{lm:invLM}, we can examine that 
${\rm LM}(\phi_{\bm{a}}^*(t_i)-t_i)\prec {\rm LM}(t_i\phi_{\bm{a}}^*(h_i))={\rm LM}(t_ih_i)$ 
and ${\rm LM}(\phi_{\bm{a}}^*(u_kh_k))={\rm LM}(u_kh_k)\prec {\rm LM}(t_1h_1)$. 
By the well-known criterion for being Gr\"obner bases, 
%This equation implies that 
$\phi_{\bm{a}}({\rm GB}(\bm{G})$ is a Gr\"obner basis of 
$\langle \phi_{\bm{a}}(\bm{G})\rangle$ with respect to $\prec$. %atisfies the condition for being a Gr\"obner basis, 

\medskip
\noindent
Claim 3. $\phi_{\bm{a}}({\rm GB}(\bm{G})$ is a minimal Gr\"obner basis of $\langle \phi_{\bm{a}}^*(\bm{G})\rangle$.\\
By Lemma \ref{lm:invLM},
${\rm LM}(\phi_{\bm{a}}^*({\rm GB}(\bm{G}))$ coincides with ${\rm GB})(\bm{G})$. 
Since ${\rm LM}({\rm GB}(\bm{G})$ is the minimal generator of ${\rm in}(\langle \bm{G}\rangle$, 
${\rm LM}(\phi_{\bm{a}}^*({\rm GB}(\bm{G}))$ is also the minimal generator of ${\rm in}(\langle \phi_{\bm{a}}^*(\bm{G})\rangle$.
\end{proof}

\if 0
\appendix

First we prove the following lemma:

\begin{lemma}
    If ${\rm Krull.dim}(R/ I) = 1$, then $R/\langle I, x_i \rangle$ is Artinian if and only if $I$ is in Noether position with respect to the variables $x_1,\ldots,x_{i-1},x_{i+1},\ldots,x_n$.
\end{lemma}

\begin{proof}
    Assume that $i=n$ without loss of generality.
    We suppose that $R/\langle I, x_n \rangle$ is Artinian.
    Let $\prec$ be a graded lexicographical ordering on $R$ with $x_n \prec \cdots \prec x_1$.
    \textcolor{red}{We may also assume that $I \cap K[x_n] = \{0\}$.}
    It follows from \cite[Theorem 3.5.1 (6)]{Singular} that $I \cap K[x_i,x_n] \neq \{0\}$ for any $i$ with $1\leq i < n$.
    Therefore, any homogeneous $g_1\in I \cap K[x_i,x_n]$ has a non-zero term divided by $x_i$.
    We choose and fix such an arbitrary $g_1$, and then $\mathrm{LM}_{\prec}(g_1) = x_i^{d_1} x_n^k$ for some integers $d_1$ and $k$ with $d_1 \geq 1$ and $k \geq 0$.
    In this case, there exists a homogeneous $h \in K[x_i,x_n]$ with $g_1 = x_n^k h$ such that $\mathrm{LM}_{\prec}(h) = x_i^{d_1}$, where we note that $h \in (I:x_n)$.
    Also by our assumption that $R/\langle I,x_n \rangle$ is Artinian, we have $x_i^{d_2} \in \langle I,x_n \rangle$ for a sufficiently large $d_2>0$, whence $g:=x_i^{d_2} g_2 \in (I + \langle x_n \rangle) \cap (I:x_n)$ and $g \in K[x_i,x_n]$ with $\mathrm{LM}_{\prec}(g_2) = x_i^{d_1+d_2}$.
    %there exists a homogeneous $g_2 \in \langle I,x_n \rangle$ such that $x_i^{d_2} = g_2$, whence $x_i^{d_2} $

    Here, \textcolor{red}{since $x_n \notin I$, we have $I = (I+\langle x_n^{s} \rangle) \cap (I:x_n^{s})$ for some $s\geq 1$.}
    % f + x_n^s h with f in I
    % x_n^s f + x_n^{2s} h in I
    Note that $\sqrt{I} \supset (\sqrt{I}+\langle x_n \rangle) \cap (I:x_n)$.
    Indeed, we have $\sqrt{I} = \sqrt{I + \langle x_n^{s} \rangle} \cap \sqrt{(I:x_n^{s}}$, where
    \begin{eqnarray*}
        &&\sqrt{I + \langle x_n^{s} \rangle} = \sqrt{\sqrt{I} + \sqrt{\langle x_n^{s} \rangle}} = \sqrt{\sqrt{I} + \langle x_n \rangle} \supset \sqrt{I} + \langle x_n \rangle, \\
        &&\sqrt{(I:x_n^{s})} \supset \sqrt{(I:x_n)}\supset (I:x_n).
    \end{eqnarray*}
    Therefore, we obtain $g \in \sqrt{I}$, whence $g^{\ell} \in I \cap K[x_i,x_n]$ for some $\ell \geq 1$ with $\mathrm{LM}_{\prec}(g^{\ell})=x_i^{\ell(d_1+d_2)}$.
    % If $I$ is radical, then $I = (I+\langle x_n \rangle) \cap (I:x_n)$.
    % Indeed, we have $I=\sqrt{I} = \sqrt{I + \langle x_n^{\ell} \rangle} \cap \sqrt{(I:x_n^{\ell})}$, where
    % \begin{eqnarray*}
    %     \sqrt{I + \langle x_n^{\ell} \rangle} = \sqrt{I} + \sqrt{\langle x_n^{\ell} \rangle} = I+\langle x_n \rangle \mbox{ and }
    %     \sqrt{(I:x_n^{\ell})} \supset \sqrt{(I:x_n)}\supset (I:x_n).
    % \end{eqnarray*}
    % Therefore we obtain $I \supset (I+\langle x_n \rangle) \cap (I:x_n)$.
    Thus, the image of $x_i$ in $R/I$ is integral over $K[x_n]$, as desired.
\end{proof}
\fi

\if 0

\appendix
\section{Action of elements in the Borel subgroup}
Let $I$ be a homogeneous ideal generated by $F=\{F_1,\ldots,F_m\}$ 
%be a set of non-zero homogeneous polynomials 
in $R=K[x_1,\ldots,x_n]$ and suppose that $I$ has the finitely many projective zeros over $\overline{K}$, 
and let $\succ$ is the DRL with $x_1\succ x_2\succ \cdots \succ x_n$.

Consider the general linear group $\mathrm{GL}(n,K)$ of degree $n$ over $K$. Then, 
$\mathrm{GL}(n,K)$ naturally acts on $R$. \\
For each element $T=(t_{i,j})$ of $\mathrm{GL}(n,K)$, $T$ gives the following isomorphism of $R$;
\[
T:R\ni f(x_1,\ldots,x_n)\mapsto f(T\cdot x_1,\ldots,T\cdot x_n)\in R,
\]
where $T\cdot x_i=\sum_{j=1}^n t_{i,j}x_j$ for $1\leq i\leq n$. 
Then, $\mathrm{GL}(n,K)$ also acts on the set of homogeneous ideals and 
moreover on the set of corresponding graded modules as follows:
For each homogeneous ideal $J$ and each element $T$ in $\mathrm{GL}(n,K)$, 
\[
T: J \mapsto T\cdot J=\{T\cdot f : f\in J\}, 
\]
%$T\cdot J=\{T\cdot f : f\in J\}$ is also a homogeneous ideal in $R$. 
and 
\[
T: R_d/J_d \in f \mapsto T\cdot f \in R_d/(T\cdot J)_d. 
\]
Since $T^{-1}$ gives an inverse map, $T$ is a bijiection. 

From now on, we assume that $T$ is a regular upper triangular matrix, 
that is, an element of the Borel subgroup of $\mathrm{GL}(n,K)$.
We show some basic properties of the action of $T$. 
\begin{lemma} The action of $T$ preserves the initials: 
\begin{itemize}
\item[(1)] For each monomial $x^{\alpha}$, ${\rm LM}_{\succ}(T\cdot x^{\alpha})=x^{\alpha}$. 
\item[(2)] For each polynomial $f$ in $R$, ${\rm LM}_{\succ}(T\cdot f)={\rm LM}_{\succ}(f)$.
\item[(3)] For a Gr\"obner basis $G$ of $I$ with respect to $\succ$, 
$T\cdot G$ is also a Gr\"obner basis of $T\cdot I$ with respect to $\succ$. 
This implies  $\langle {\rm LM}(I)\rangle=\langle {\rm LM}(T\cdot I)\rangle$. 
\end{itemize}
\end{lemma}
\begin{proof}
(1) Let $x^{\alpha}=x_1^{\alpha_1}\cdots x_n^{\alpha_n}$. Then, 
\[
T\cdot x^{\alpha} =  \prod_{i=1}^n T\cdot x_i^{\alpha_i}
=  \prod_{i=1}^n (t_{i,i}x_i+t_{i,i+1}x_{i+1}+\cdots +t_{i,n}x_n)^{\alpha_i}
\]
Meanwhile, the ordering is the DRL with $x_1\succ \cdots \succ x_n$, we have 
\[{\rm LM}_\succ(T\cdot x_i^{\alpha_i})=
{\rm LM}_\succ(t_{i,i}x_i+t_{i,i+1}x_{i+1}+\cdots +t_{i,n}x_n)^{\alpha_i})
=x_i^{\alpha_i}.
\]
Thus, 
\[
{\rm LM}_\succ(x^\alpha)=\prod_{i=1}^n {\rm LM}_\succ(x_i^{\alpha_i})
=\prod_{i=1}^n x_i^{\alpha_i}=x^{\alpha}.
\]

\noindent
(2) Let $f=\sum_{x^\alpha \in {\rm supp}(f)} c_\alpha x^{\alpha}$. 
Then, we have 
$T \cdot f=\sum_{\alpha} c_\alpha T\cdot x^{\alpha}$ 
and ${\rm LM}_\succ(T\cdot x^{\alpha})=x^\alpha$.
Therefore, 
\begin{eqnarray*}
{\rm LM}_\succ(T\cdot f) & = & {\rm max}\{{\rm LM}_\succ(T\cdot x^\alpha): x^\alpha \in {\rm supp}(f)\}\\
&   = & {\rm max}\{{\rm LM}_\succ(x^\alpha): x^\alpha \in {\rm supp}(f)\}={\rm LM}_\succ(f).
\end{eqnarray*}

\noindent
(3) Let $G'$ be a Gr\"obner basis of $T\cdot I$ with respect to $\succ$. 
As $G\subset I$, $T\cdot G\subset T\cdot I$. 
Since ${\rm LM}_\succ (T\cdot G)={\rm LM}_\succ(G)$ by (2), 
we have 
\[
\langle {\rm LM}_\succ(G')\rangle=\langle LM_\succ(T\cdot I)\rangle
\supset
\langle {\rm LM}_\succ(T\cdot G)\rangle
=\langle {\rm LM}_\succ(G)\rangle. 
\]
On the other hand, as $T$ is a bijection of $R_d/I_d$ and $R_d/(T\cdot I)_d$ 
for every non-negative integer $d$, 
we have $\dim_K R_d\langle {\rm LM}(G')\rangle_d
=\dim_K R_d/\langle {\rm LM}(G)\rangle_d$. 
Then, it follows 
\[
\langle {\rm LM}_\succ(G')\rangle =\langle {\rm LM}_\succ(G)\rangle.
\]
Therefore, we have 
\[
\langle {\rm LM}_\succ (G')\rangle=\langle {\rm LM}_\succ(T\cdot G)\rangle, 
\]
which implies that $T\cdot G$ is a Gr\"obner basis of $T\cdot I$ with respect to $\succ$. 
%である. (極少基底である.)
\end{proof}

%As $I$ is generated by a finite subset $F$, 
%\[
%T\cdot I=\langle F\cdot F\rangle=\langle \{T\cdot F_1,\ldots,T\cdot F_m\rangle.
%\]

\if 0
\section{Memo}

Assume that $H$ be a set of homogeneous polynomials in $R'$, and that $d_{\rm reg}(I) = \infty$ but $\widetilde{d}_{\rm reg}(I) < \infty$, where $I = \langle H \rangle$.

\begin{lemma}
For any $s$, we have
    \[
    {\rm HS}_{R'/\langle H \rangle} (z) \equiv {\rm HS}_{R'/\langle H, y^s\rangle} (z) \pmod{z^s}.
    \]
\end{lemma}

For $H = \{ h_1,\ldots, h_m \}$ such that $\bm{H}^{\rm top}=(h_1^{\rm top},\ldots, h_m^{\rm top})$ is cryptographic semi-regular, $\bm{H}=(h_1,\ldots,h_m)$ is generalized cryptographic semi-regular if and only if $(h_1,\ldots,h_m,y^{D'})$ is $D'$-regular.

\begin{lemma}
    If $R'/\langle H, y \rangle$ is Artinian, then so is $R'/ \langle H,y^s \rangle$ for any $s$.
    Moreover, putting $D:=d_{\rm reg}(\langle H, y \rangle)$, then $\max \{ D,s\}\leq d_{\rm reg}(\langle H,y^s \rangle) \leq D+s-1$ for any $s$.
\end{lemma}

\begin{lemma}\label{lem:bound_new}
    Let $H = \{ h_1, \ldots, h_m \}$ be a set of homogeneous polynomials in $R' = R[y]$.
    Assume that $R'/\langle H, y \rangle$ (which is isomorphic to $R/\langle H^{\rm top} \rangle$ with $H^{\rm top} =H|_{y=0}$) is Artinian, namely $D:=d_{\rm reg}(\langle H^{\rm top} \rangle_{R}) < \infty$.
    \begin{enumerate}
        \item For any $s$, the sequence $(h_1,\ldots,h_m,y^s)$ is Artinian.
    \end{enumerate}
    We also put $D':=\widetilde{d}_{\rm reg}(\langle H \rangle_{R'})$. 
    If $\bm{H}^{\rm top}$ is cryptographic semi-regular (i.e., $D$-regular), then $\bm{H}$ is generalized cryptographic semi-regular (i.e., $D'$-regular).
    
    We also assume that $D':=\widetilde{d}_{\rm reg}(\langle H \rangle_{R'}) < \infty$.
    Then, if $D' \ge D$, we have $\mathrm{max.GB.deg}_{\prec'} (H ) \leq D'$ for any graded monomial ordering $\prec'$ on $R'$ given in Lemma \ref{lem:Lazard}.
    % \begin{enumerate}
    %     \item $\widetilde{d}_{\rm reg}( \langle F^h \rangle) \ge d_{\rm reg}(\langle F^{\rm top} \rangle)-1$.
    %     \item $\mathrm{max.GB.deg}_{\prec^h} (F^h ) \leq \max \{ {d}_{\rm reg}(\langle F^{\rm top} \rangle), \widetilde{d}_{\rm reg}( \langle F^h \rangle) \}$ for any graded monomial ordering $\prec$.
    % \end{enumerate}
\end{lemma}

\begin{proof}
    For any natural number $s$, considering a mapping cone of Koszul complexes, we have the following exact sequence:
\begin{equation}\label{eq:exact}
{
	\xymatrix{
	  H_{1}(K_{\bullet}^{(s)})_d \ar[r] & H_{0}(K_{\bullet})_{d-s} \ar[r]^{\times y^s} & H_{0}(K_{\bullet})_d \ar[r] & H_{0}(K_{\bullet}^{(s)})_d \ar[r] & 0    \\
	}}
\end{equation}
for each $d$, where $K_{\bullet}$ (resp.\ $K_{\bullet}^{(s)}$) denotes the Koszul complex on the sequence $(h_1,\ldots, h_m)$ (resp.\ the sequence $(h_1,\ldots, h_m, y^s)$).

First consider the case where $D' \geq D$.
Recall that, for $s=1$, the multiplication-by-$y$ map $H_0(K_{\bullet})_{d-1} \longrightarrow H_0(K_{\bullet})_d$ is bijective for any $d$ with $d-1 \geq D'$.
Therefore, for any $s$, the multiplication-by-$y^s$ map $H_0(K_{\bullet})_{d-s} \longrightarrow H_0(K_{\bullet})_d$ is bijective for any $d$ with $d-s \geq D'$:
In this case 

% Here we have $H_1 (K_{\bullet}\otimes_{R'}R)_d = 0$ and $H_0(K_{\bullet}\otimes_{R'}R)_d \neq 0$ for $d \leq D-1$, whence $\dim_K(R'/\langle F^h \rangle)_{d-1} < \dim_K (R'/\langle F^h \rangle)_{d}$ for any such $d$.
% Therefore, the first argument to be proved holds.
% On the other hand, 
It follows from the definition of $d_{\rm reg}$ that $H_0(K_{\bullet}')_d  = 0$ for any $d$ with $d \geq D$.
Thus, for any $d$ with $d \geq D$, the multiplication-by-$y$ map $H_{0}(K_{\bullet})_{d-1} \longrightarrow H_{0}(K_{\bullet})_d$ is surjective, and it is bijective if and only if $\dim_K(R'/\langle H\rangle)_{d-1} = \dim_K (R'/\langle H \rangle)_{d}$.
Here, for any $d$ with $d \ge D'$, the multiplication-by-$y$ map $H_{0}(K_{\bullet})_{d} \longrightarrow H_{0}(K_{\bullet})_{d+1}$ is bijective, since $d+1 \ge D' +1 \ge D$.
By this together with the surjectivity of $H_{0}(K_{\bullet})_{D'-1} \longrightarrow H_{0}(K_{\bullet})_{D'}$ (this surjectivity comes from $D'\geq D$), it follows from Lemma \ref{lem:Lazard} that $\mathrm{max.GB.deg}_{\prec'} (H )$ is upper-bounded by $D'$, as desired.
\end{proof}
\fi
\fi

\fi
\end{document}